\date{}
\def\theenumi{\arabic{enumi}}
\def\theenumii{\alph{enumii}}
\def\p@enumii{\theenumi.}
\def\theenumiii{\arabic{enumiii}}
\def\p@enumiii{(\theenumi)(\theenumii)}
\def\p@enumiv{\p@enumiii.\theenumiii}
\newtheorem{theorem}{Theorem}[section]
\newtheorem{claim}[theorem]{Claim}
\newtheorem{corollary}[theorem]{Corollary}
\newtheorem{fact}[theorem]{Fact}
\newtheorem{lemma}[theorem]{Lemma}
\newtheorem{proposition}[theorem]{Proposition}
\theoremstyle{definition}
\newtheorem{definition}[theorem]{Definition}
\newtheorem{remark}[theorem]{Remark}
\begin{document}

\title{Shalom's property $H_{\mathrm{FD}}$ and extensions by $\mathbb{Z}$ of locally finite groups}

\author{J\'er\'emie Brieussel and Tianyi Zheng}
\maketitle
\begin{abstract}
We show that every finitely generated extension by $\mathbb{Z}$ of a locally normally finite group has Shalom's property $H_{\mathrm{FD}}$. This is no longer true without the normality assumption. This permits to answer some questions of Shalom, Erschler-Ozawa and Kozma. We also obtain a Neumann-Neumann embedding result that any countable locally finite group embedds into a two generated amenable group with property $H_{\mathrm{FD}}$.
\end{abstract}

\section{Introduction}

A finitely generated group has property $H_{\mathrm{FD}}$ (resp. $H_{\mathrm{T}}$) if every unitary representation with non-trivial reduced first cohomology admits a finite dimensional (resp. trivial) subrepresentation. Property $H_{\mathrm{FD}}$ was introduced by Shalom \cite{Shalom} as an invariant of quasi-isometry among finitely generated amenable groups. Recently, by showing directly that a group of polynomial growth has Property $H_\mathrm{FD}$, Ozawa \cite{Ozawa} gave a functional analysis proof of Gromov's theorem on groups of polynomial growth.

Amenable groups with property $H_{\mathrm{FD}}$ are somewhat ``small'' groups. In \cite{Shalom}, various families of solvable groups having, and not having property $H_{\mathrm{FD}}$ were shown. For instance by \cite{Shalom}, polycyclic groups and wreath products $(\mathbb{Z}/n\mathbb{Z}) \wr \mathbb{Z}$, $n\in\mathbb{N}$, have property $H_{\mathrm{FD}}$, while $\mathbb{Z} \wr \mathbb{Z}$ does not. Towards a unified explanation for these examples, Shalom conjectured in \cite[Section 6]{Shalom} that for solvable groups, property $H_{\mathrm{FD}}$ would be equivalent to finite Hirsch length (see \cite{Hillman} for the definition of  the Hirsch length of  elementary amenable groups). We describe counter-examples to both directions of this conjecture. In one direction, in Proposition \ref{wreath3} we show that the wreath product $(\mathbb{Z}/2\mathbb{Z})\wr \mathbb{Z}^d$ with $d\ge 3$ does not have property $H_{\mathrm{FD}}$, while it has finite Hirsch length. In the other direction, we construct a solvable group of infinite Hirsch length which has property $H_{\mathrm{FD}}$ by applying a result of Gournay \cite{Gournay}, see  Proposition \ref{3solv}. 

Aiming to generalize the case of lamplighter groups, Shalom also asked whether all locally-finite-by-$\mathbb{Z}$ groups have $H_{\mathrm{FD}}$. This is not true in general, in subsection \ref{sec:noHFD} we construct explicit harmonic cocycles with weakly mixing representation to show the following examples of locally-finite-by-$\mathbb{Z}$ groups don't have property $H_{\rm{FD}}$: the group $\textrm{Sym}(\mathbb{Z}) \rtimes \mathbb{Z}$, the discrete affine group of a regular tree and some locally nilpotent by $\mathbb{Z}$ groups introduced by Gromov \cite{Gromov} --- see Propositions \ref{SymZ}, \ref{DA} and \ref{prop_Gromov}. However, we prove that any locally-normally-finite-by-$\mathbb{Z}$ group has Shalom's property $H_{\mathrm{FD}}$.

Recall that a group is locally finite if any finite subset is included in a finite subgroup. We say a group is {\it locally normally finite} if any finite subset is included in a finite normal subgroup. For instance, the group $\oplus_{\mathbb{Z}} \mathbb{Z}/2\mathbb{Z}$ is locally normally finite, whereas the group $\textrm{Sym}(\mathbb{Z})$ of finitely supported permutations is locally finite but not locally normally finite.

\begin{theorem}\label{main}
Let $G$ be a finitely generated group that fits in an exact sequence
\[
1\to L\to G\to\mathbb{Z}\to1.
\]
where $L$ is locally normally finite. Then the group $G$ has Shalom's property $H_{\mathrm{T}}$, hence $H_{\mathrm{FD}}$. 
\end{theorem}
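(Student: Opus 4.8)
The plan is to argue entirely with harmonic $1$-cocycles. Fix a finite symmetric generating set $S$ of $G$, let $\mu$ be the uniform probability measure on $S$, and recall that every reduced first cohomology class of a unitary representation is represented by a unique $\mu$-harmonic cocycle (a cocycle $b$ with $\sum_{s}\mu(s)\,b(s)=0$); so it suffices to show that if $\pi$ is a unitary representation of $G$ with $\mathcal H^{\pi(G)}=0$ and $b\colon G\to\mathcal H$ is $\mu$-harmonic, then $b=0$ (and $H_{\mathrm T}\Rightarrow H_{\mathrm{FD}}$ is automatic). I would first exploit the hypotheses to build a convenient exhaustion of $L$. Since $\mathbb Z$ is free the extension splits, $G=L\rtimes\langle t\rangle$ with $t$ acting on $L$ by an automorphism $\alpha$, and one may choose generators $t^{\pm1},a_1^{\pm1},\dots,a_k^{\pm1}$ of $G$ with $a_i\in L$, so that $L=\langle\alpha^{j}(a_i)\rangle$. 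Using that $L$ is locally normally finite, I construct an increasing chain of finite subgroups $N_0\subseteq N_1\subseteq\cdots$, each normal in $L$, with $a_i\in N_0$, $\alpha^{\pm1}(N_n)\subseteq N_{n+1}$, and $\bigcup_n N_n=L$: take $N_0\trianglelefteq L$ finite containing the $a_i$, and inductively let $N_{n+1}$ be a finite normal subgroup of $L$ containing $N_n\cup\alpha(N_n)\cup\alpha^{-1}(N_n)$.

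The heart of the argument is the linearization of $b$ on the finite groups $N_n$. As $b|_{N_n}$ is a coboundary, the barycenter $v_n:=\tfrac1{|N_n|}\sum_{h\in N_n}b(h)$ satisfies $b(h)=v_n-\pi(h)v_n$ for $h\in N_n$; one checks that $v_n\perp\mathcal H^{N_n}$, that $v_{n+1}-v_n\in\mathcal H^{N_n}$ (so the increments are pairwise orthogonal and $\|v_n\|$ is nondecreasing), and that $\tfrac1{|N_n|}\sum_{h\in N_n}\|b(h)\|^2=2\|v_n\|^2$. Expanding the cocycle identity for $tht^{-1}$, $t^{-1}ht$, and $\ell h\ell^{-1}$ ($h\in N_n$, $\ell\in L$) and using the coboundary formulas at levels $n,n+1$ yields the compatibility relations
\[
v_{n+1}-b(t)-\pi(t)v_n\in\mathcal H^{tN_nt^{-1}},\qquad v_{n+1}-\pi(t)^{-1}(v_n-b(t))\in\mathcal H^{t^{-1}N_nt},\qquad b(\ell)-(v_n-\pi(\ell)v_n)\in\mathcal H^{N_n}.
\]
I would also record the strong form of property $H_{\mathrm T}$ for $\mathbb Z$: for any unitary representation $\rho$ of $\mathbb Z$ one has $\overline{\operatorname{range}(I-\rho(1))}=(\mathcal H^{\rho(\mathbb Z)})^{\perp}$, so $\overline H^1(\mathbb Z,\rho)\cong\mathcal H^{\rho(\mathbb Z)}$ and any harmonic $\mathbb Z$-cocycle with trivial reduced class (for example one valued in a representation without invariant vectors) vanishes. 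A preliminary reduction: splitting $\mathcal H=\mathcal H^L\oplus(\mathcal H^L)^\perp$ into $\pi(G)$-invariant subspaces, the $\mathcal H^L$-component of $b$ vanishes on $L$ (by the coboundary formula and $\pi(L)|_{\mathcal H^L}=\mathrm{id}$), descends to a harmonic $\mathbb Z$-cocycle into $\mathcal H^L$, and is zero because $(\mathcal H^L)^{\mathbb Z}=\mathcal H^{\pi(G)}=0$; hence one may assume $\mathcal H^L=0$.

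Now split into cases according to $\sup_n\|v_n\|$. If it is finite, take a weak limit $v_{n_j}\rightharpoonup v$; since each $h\in L$ lies in $N_n$ for large $n$, passing to the limit in $b(h)=v_n-\pi(h)v_n$ gives $b(h)=v-\pi(h)v$ for all $h\in L$. Expanding $b(t\ell t^{-1})$ two ways and using $\mathcal H^L=0$ forces $b(t)=v-\pi(t)v$ as well, so the cocycle $b-\partial v$ (where $\partial v(g)=v-\pi(g)v$) vanishes on $L$ and at $t$, hence on all of $G$; thus $b=\partial v$ has trivial reduced class and, being harmonic, $b=0$.

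The remaining case, $\|v_n\|\to\infty$, is the real obstacle. Here the unit vectors $\hat v_n:=v_n/\|v_n\|$ satisfy $\|(I-\pi(h))\hat v_n\|=\|b(h)\|/\|v_n\|\to0$ for every fixed $h\in L$, so they are almost $L$-invariant, and the compatibility relations show $\pi(t)^{\pm1}\hat v_n$ are again almost $L$-invariant, so this direction of almost-invariance is $\langle t\rangle$-stable. The task is to upgrade this — using harmonicity of $b$ and property $H_{\mathrm T}$ of the quotient $\mathbb Z$ — into a genuine nonzero $\pi(G)$-invariant vector, contradicting $\mathcal H^{\pi(G)}=0$; equivalently, to show that a harmonic cocycle cannot have $\|v_n\|$ unbounded. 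This is where I expect the bulk of the work, and the only place where normality of the $N_n$ in $L$ is used (mere local finiteness of $L$ being insufficient, since the statement fails then): almost-invariant vectors alone are harmless because $L$ is amenable, so one must extract the genuine $\mathbb Z$-quotient behaviour — presumably by forming, out of the $\hat v_n$, a limiting representation on which $\pi$ factors through $\mathbb Z$ and in which the cocycle persists, and then invoking $\overline H^1(\mathbb Z,-)\cong(-)^{\mathbb Z}$ — with the harmonicity identities ($\|b(g)\|^2$ has constant Laplacian $\sum_s\mu(s)\|b(s)\|^2$, and $\tfrac1{|N_n|}\sum_{h\in N_n}\|b(h)\|^2=2\|v_n\|^2$) pinning down the growth and ruling out the pathology.
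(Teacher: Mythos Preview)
Your preliminary reduction to $\mathcal H^L=0$ and your Case~1 (bounded $\|v_n\|$) are correct; in fact the pairwise orthogonality of the increments $v_{n+1}-v_n$ that you record gives \emph{strong} convergence of $(v_n)$ when the norms are bounded, so the weak-limit step is even cleaner than you wrote, and the deduction $b(t)=v-\pi(t)v$ from $\mathcal H^L=0$ via $b(t\ell t^{-1})$ is right.

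Case~2, however, is not a remaining detail but the whole theorem, and your sketch does not close it. As you yourself observe, the almost-$L$-invariance of $\hat v_n$ is automatic for any representation of a locally finite (hence amenable) group and carries no information; and there is no visible mechanism by which harmonicity or the $\mathbb Z$-quotient produces a genuine $\pi(G)$-invariant vector from the $\hat v_n$. An ultralimit of $(\mathcal H,\hat v_n)$ would yield an $L$-fixed vector in a \emph{new} Hilbert space, with no link back to $\pi$. The growth identities you list (constant Laplacian of $\|b(\cdot)\|^2$, and $\tfrac{1}{|N_n|}\sum_{h\in N_n}\|b(h)\|^2=2\|v_n\|^2$) are compatible with $\|v_n\|\to\infty$ and do not by themselves give a contradiction.

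The paper takes a completely different route that bypasses your dichotomy and proves directly that $b|_L=0$, so that your Case~2 never arises. The hypothesis used is not merely $\mathcal H^{\pi(G)}=0$ but that $\pi$ is \emph{weakly mixing}; this is essential for the argument. The two ingredients are:
\begin{itemize}
\item An adaptation of a lemma of Ozawa: for weakly mixing $\pi$ and $\mu$-harmonic $b$, one has
\[
\frac{1}{\sqrt n}\,\sup_{g\in\mathcal S_n(\delta)}\|b(g)\|\longrightarrow 0,\qquad \mathcal S_n(\delta)=\Bigl\{g:\ \tfrac12\|\mu^{(n)}-g\mu^{(n)}\|_1<\delta\Bigr\}.
\]
\item A random-walk coupling, in the spirit of the lamplighter argument of Benjamini--Duminil-Copin--Kozma--Yadin. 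For $\gamma$ in a finite $F\trianglelefteq L$, one couples two $\mu^{(R)}$-walks from $\gamma$ and from $e$ so that their $\mathbb Z$-projections always agree, and they merge with fixed positive probability each time the projection returns to $0$ and a uniform-on-$F$ increment is drawn. Normality of $F$ in $L$ is used precisely to ensure that the discrepancy $\tilde Z_n^{-1}\gamma Z_n$ lies in a conjugate $F^x$ depending only on the $\mathbb Z$-coordinate $x$. Recurrence of simple random walk on $\mathbb Z$ then gives $\bigcup_{|x|\le r}F^x\subset\mathcal S_{Mr^2}(\delta)$ together with a coupling-time bound $\mathbb P(\tau>\tau_r)\le C/r$; optional stopping plus Ozawa's lemma yield
\[
\|b(\gamma)\|\ \le\ \frac{C}{r}\,\sup\bigl\{\|b(z)\|:z\in\mathcal S_{Mr^2}(\delta)\bigr\}\ \xrightarrow[r\to\infty]{}\ 0.
\]
\end{itemize}
Once $b|_L=0$, the cocycle identity shows $\pi$ restricted to $\overline{\mathrm{Im}(b)}$ factors through $\mathbb Z$, and $H_{\mathrm T}$ for $\mathbb Z$ finishes the proof. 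Thus the paper's argument is probabilistic and exploits weak mixing through Ozawa's sub-$\sqrt n$ growth estimate; your barycenter/almost-invariant-vector approach does not have access to this information, and as it stands the gap in Case~2 is genuine.
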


We remark that in Theorem \ref{main}, the assumption that $L$ is locally normally finite is an "intrinsic" property of the group $L$, it doesn't depend on the extension. Although algebraically the locally-normally-finite assumption is restrictive,
random walks on the class of locally-normally-finite-by-$\mathbb{Z}$ groups  exhibit very rich behavior. See examples and more details in subsection \ref{subsec: example}.
In particular, we obtain new examples of groups with property $H_{\rm{FD}}$.

Erschler and Ozawa  \cite{EO} have given a criterion for the property that any finite dimensional summand of $b$ is cohomologically trivial,  where $b$ is a $1$-cocycle with coefficients in a unitary representation.
As a corollary, they obtain a sufficient condition for $H_{\mathrm{FD}}$: if a group admits a symmetric probability measure $\mu$ with finite generating support such that $\limsup_n \mu^{(n)}(B(e,c\sqrt{n}))>0$ for all $c>0$, then it has property $H_{\mathrm{FD}}$. They pointed out that all previously known examples of amenable groups with $H_{\mathrm{FD}}$ were covered by this corollary. As an application of Theorem \ref{main}, we obtain examples where the speed of random walk can be super-diffusive.

\begin{corollary}\label{cor}
For any function $f:[1,\infty) \to [1,\infty)$ such that $f(1)=1$ and $\frac{f(x)}{\sqrt{x}}$ is  non-decreasing and $\frac{f(x)}{x}$ decreases to $0$ as $x\to \infty$, there exists a group with property $H_{\mathrm{FD}}$ and a symmetric probability measure $\mu$ of finite generating support with speed of random walk $\mathbb{E}_{\mu^{(n)}}d(e,x)\simeq f(n)$.
\end{corollary}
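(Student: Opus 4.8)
The plan is to produce, for each $f$ satisfying the stated hypotheses, a finitely generated locally-normally-finite-by-$\mathbb{Z}$ group carrying a symmetric probability measure of finite generating support whose random walk has speed $\simeq f$, and then to invoke Theorem \ref{main} to conclude that this group has property $H_{\mathrm{T}}$, hence $H_{\mathrm{FD}}$. The group will be one of the diagonal products of lamplighters used to realise prescribed random walk behaviour (as recalled in subsection \ref{subsec: example}); the only point specific to this corollary is that, with the lamp groups chosen appropriately, the locally finite kernel is in fact locally \emph{normally} finite, so that Theorem \ref{main} can be applied.

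Concretely, I would take $\Delta = \Delta(\{A_k \wr \mathbb{Z}\}_k)$, the diagonal product over a fixed copy of $\mathbb{Z}$ of the lamplighter groups $A_k \wr \mathbb{Z}$, where the $A_k$ are finite abelian groups of uniformly bounded exponent — for instance elementary abelian $2$-groups $A_k = (\mathbb{Z}/2\mathbb{Z})^{d_k}$ — each equipped with a marked generating set projecting onto a common $\mathbb{Z}/2\mathbb{Z}$, and where the ranks $d_k$ together with the interpolation parameters of the diagonal product are fixed according to $f$. The group $\Delta$ is finitely generated, and projecting each lamplighter onto its base gives a surjection $\Delta \twoheadrightarrow \mathbb{Z}$ whose kernel $L$ is the group of admissible lamp configurations, a subgroup of the restricted product $\prod_k \bigoplus_{\mathbb{Z}} A_k$. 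Since each $A_k$ is abelian of bounded exponent, so is $L$; in particular every finite subset of $L$ generates a finite subgroup, which is automatically normal in $L$ because $L$ is abelian. Thus $L$ is locally normally finite and $1 \to L \to \Delta \to \mathbb{Z} \to 1$ is an extension of the form required by Theorem \ref{main}.

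It remains to choose the parameters so that the simple random walk on $\Delta$ (equivalently, the uniform measure on a finite generating set together with its inverses) has speed $\mathbb{E}_{\mu^{(n)}} d(e,x) \simeq f(n)$; this is where the substance of the argument lies, and it is imported rather than developed from scratch. I would quote the speed estimates for diagonal products of lamplighters, which express the speed in terms of the interpolation data, and observe that the hypotheses on $f$ — $f(1)=1$, $f(x)/\sqrt x$ non-decreasing, and $f(x)/x$ decreasing to $0$ — are exactly the conditions under which the data can be tuned to match $f$. The main obstacle is to check that restricting to lamp groups of bounded exponent (needed to obtain a \emph{normally} finite kernel, rather than merely a locally finite one) does not shrink the range of realisable $f$; this amounts to re-running the known parameter computations under this restriction rather than to a new idea. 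Granting the speed estimate, Theorem \ref{main} applies to $\Delta$ and gives property $H_{\mathrm{T}}$, a fortiori $H_{\mathrm{FD}}$, which proves the corollary. We note that in the super-diffusive regime $f(n)/\sqrt n \to \infty$ the Erschler--Ozawa return-probability criterion \cite{EO} for $H_{\mathrm{FD}}$ no longer applies, so the appeal to Theorem \ref{main} is genuinely needed.
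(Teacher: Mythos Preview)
Your overall strategy --- build a locally-normally-finite-by-$\mathbb{Z}$ group with the prescribed speed and then invoke Theorem \ref{main} --- is exactly what the paper does. The divergence, and the gap, is in how you obtain local normal finiteness.

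You force the kernel $L$ to be abelian by taking abelian lamp groups $A_k$, so that normality of finite subgroups is automatic. But the prescribed-speed mechanism of \cite{BZ} depends essentially on the lamp groups $\Gamma_s$ being \emph{non}-abelian: one places the $A(s)$-generators at position $0$ and the $B(s)$-generators at position $k_s$ in $\Gamma_s\wr\mathbb{Z}$, and the speed is governed by $k_s$ precisely because a word must traverse distance $k_s$ before the commutators $[A(s),B(s)]$ can be accessed. If $\Gamma_s$ is abelian these commutators vanish, the factor $\Gamma_s\wr\mathbb{Z}$ collapses to an ordinary finite-lamp lamplighter, and the diagonal product has speed $\simeq\sqrt{n}$ regardless of the parameters. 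So the obstacle you flag --- checking that abelian lamp groups still realise all admissible $f$ --- is not a matter of ``re-running the known parameter computations'': it actually fails.

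The paper avoids this entirely. It simply cites \cite[Fact 2.10]{BZ}, which asserts that the original \cite{BZ} groups (with their non-abelian $\Gamma_s$) already have locally normally finite kernel. Local normal finiteness does not require $L$ to be abelian; it only asks that every finite subset lie in a finite subgroup normal \emph{in $L$}, and the \cite{BZ} construction is set up so that this holds. With that fact in hand, Corollary \ref{cor} is a one-line deduction from Theorem \ref{main}, with no need to modify the construction.
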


Here $g(n)\simeq f(n)$ means there is $C\ge1$ with $\frac{f(n)}{C}\le g(n) \le Cf(n)$. Corollary \ref{cor} follows from Theorem \ref{main} as the groups introduced in \cite{BZ} with prescribed speed function are locally-normaly-finite-by-$\mathbb{Z}$, see \cite[Fact 2.10]{BZ}. These groups can also be chosen to have prescribed $\ell^p$-isoperimetry or $L_p$-compression. 

In fact, examples of groups with property $H_{\mathrm{FD}}$ not satisfying the assumption of Erschler-Ozawa's corollary mentioned above can be obtained without Theorem \ref{main} using a result of Gournay \cite{Gournay}, who proved that harmonic cocycles of weakly mixing representations factorize by the FC-center $Z^{\mathrm{FC}}$, which consists of elements with finite conjugacy classes. Indeed, the construction of \cite[Section 2]{BZ} can be modified by taking finite factor groups. The group obtained is an FC-central extension by a lamplighter group, which has property $H_{\mathrm{T}}$ by \cite{Shalom}. See more details in Subsection \ref{subsec: example}. This variation of construction provides us groups with the following properties.

 \begin{corollary}\label{cor-function}
Let $f:\mathbb{R}_{+} \to \mathbb{R}_{+}$ be a sub-addtive function such that $\frac{f(x)}{x}\to 0$ as $x\to \infty$. Then there is a finite generated group $\Delta$ and a symmetric probability measure $\mu$ of finite generating support on $\Delta$ such that for some constant $c>0$, for all $n\ge 1$,
$$\mathbb{E}_{\mu^{(n)}}d(e,x) \ge cf(n);$$
while all sub-exponential growth $\mu$-harmonic functions on $\Delta$ factors through a quotient map $\Delta \to F\wr \mathbb{Z}$ with $F$ finite. 
In particular, all sublinear $\mu$-harmonic functions on $\Delta$ are constant.
\end{corollary}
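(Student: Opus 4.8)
The plan is to build $\Delta$ by a variant of the construction of \cite{BZ} and then run a cocycle argument governed by Gournay's theorem and Shalom's property $H_{\mathrm{T}}$ for lamplighters. Starting from the diagonal products of lamplighters of \cite[Section 2]{BZ} used for Corollary~\ref{cor}, I would replace the lamp groups at each scale by finite factor groups; for a suitable choice of the marker parameters this yields a finitely generated group $\Delta$ fitting in a short exact sequence
\[
1 \longrightarrow Z \longrightarrow \Delta \longrightarrow F \wr \mathbb{Z} \longrightarrow 1,
\]
with $F$ finite and $Z = Z^{\mathrm{FC}}(\Delta)$ locally finite (hence torsion), carrying a symmetric switch-walk-switch measure $\mu$ of finite generating support. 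The lower bound $\mathbb{E}_{\mu^{(n)}}d(e,x) \ge cf(n)$ survives the modification: in \cite{BZ} it is obtained by bounding below the displacement of the walk inside the lamp coordinates, and this displacement persists when the lamp groups are made finite (only the matching upper bound is lost). For $f$ below diffusive the bound is immediate, so only the range $\sqrt{n}\lesssim f(n)\lesssim n$ treated in \cite{BZ} is relevant.

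Since $f(n)/n\to 0$ the speed is sublinear, so the fundamental inequality relating asymptotic entropy, speed and exponential growth forces the asymptotic entropy of $(\Delta,\mu)$ to vanish; thus $(\Delta,\mu)$ is Liouville. (It also has property $H_{\mathrm{T}}$: $F\wr\mathbb{Z}$ does by \cite{Shalom}, and an FC-central extension of an $H_{\mathrm{T}}$ group has $H_{\mathrm{T}}$ by the mechanism discussed before the statement, namely Gournay's factorization of weakly mixing harmonic cocycles through the FC-center.) We also record that the FC-center of $\Delta$ is exactly the kernel $Z$, and that any homomorphism $\Delta\to\mathbb{R}$ kills $Z$ because $Z$ is torsion, hence factors through $F\wr\mathbb{Z}$; as $\mathrm{Hom}(F\wr\mathbb{Z},\mathbb{R})=\mathbb{R}$ is generated by the cursor-position homomorphism $\tau\colon\Delta\to F\wr\mathbb{Z}\to\mathbb{Z}\hookrightarrow\mathbb{R}$, this accounts for all real characters.

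For the main assertion, let $h$ be a $\mu$-harmonic function on $\Delta$ of sub-exponential growth; we must show $h$ is invariant under $Z$. Decompose the (generalized) representation associated with $h$ into its weakly mixing part and the complementary part spanned by its finite-dimensional subrepresentations, and correspondingly $h=h^{\mathrm{wm}}+h^{\mathrm{cpt}}$. The summand $h^{\mathrm{wm}}$ is precisely the object to which Gournay's theorem applies, and its conclusion is that the associated harmonic cocycle, and hence $h^{\mathrm{wm}}$ itself, factors through $Z^{\mathrm{FC}}(\Delta)=Z$, i.e.\ through $F\wr\mathbb{Z}$. For $h^{\mathrm{cpt}}$: on each finite-dimensional piece the associated harmonic cocycle, restricted to $Z$, is a $1$-cocycle for a unitary representation of the locally finite group $Z$; since $H^1$ of a finite group over $\mathbb{C}$ vanishes and $Z$ is an increasing union of finite normal subgroups whose invariant subspaces form a Mittag--Leffler system, every $1$-cocycle of a unitary representation of $Z$ is a coboundary, so this cocycle can be trivialized by subtracting a global coboundary on $\Delta$. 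The latter alters $h^{\mathrm{cpt}}$ only by a bounded (matrix-coefficient) harmonic function, hence by a constant since $\Delta$ is Liouville. Thus $h^{\mathrm{cpt}}$, and therefore $h=h^{\mathrm{wm}}+h^{\mathrm{cpt}}$, is $Z$-invariant, so $h$ factors through $F\wr\mathbb{Z}$.

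The ``in particular'' clause then follows. A sublinear $\mu$-harmonic function $h$ on $\Delta$ is sub-exponential, hence factors through a harmonic function $\bar h$ on $(F\wr\mathbb{Z},\bar\mu)$, which is again sublinear because the quotient map is Lipschitz and every element of $F\wr\mathbb{Z}$ lifts to an element of $\Delta$ of comparable word length. On $F\wr\mathbb{Z}$ the walk is diffusive, so $\bar h$ has finite energy and defines a reduced harmonic $1$-cocycle; by property $H_{\mathrm{T}}$ of $F\wr\mathbb{Z}$ its class lies in $\mathrm{Hom}(F\wr\mathbb{Z},\mathbb{R})=\mathbb{R}$, and by uniqueness of the harmonic representative in a reduced class, $\bar h$ is affine in the cursor homomorphism $\bar\tau$; since $\bar\tau$ has genuinely linear growth, $\bar h$, and therefore $h$, is constant. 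The step I expect to be the main obstacle is the one invoking Gournay on $h^{\mathrm{wm}}$: unlike a Lipschitz (finite-energy) harmonic function, a merely sub-exponential $h$ need not define a $1$-cocycle into an honest Hilbert space, so one must work within Gournay's weighted/non-Hilbertian framework and verify that the decomposition $h=h^{\mathrm{wm}}+h^{\mathrm{cpt}}$ and the factorization through the FC-center remain valid there; the compact-part argument, by contrast, is essentially algebraic once $\Delta$ is known to be Liouville and $Z$ locally finite.
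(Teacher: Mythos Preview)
Your construction of $\Delta$ and the speed lower bound match the paper's. The gap is in the harmonic-function step. A sub-exponential $\mu$-harmonic function $h:\Delta\to\mathbb{R}$ is not a $1$-cocycle and does not come attached to any unitary representation, so the decomposition $h=h^{\mathrm{wm}}+h^{\mathrm{cpt}}$ has no meaning as stated; Gournay's theorem applies to harmonic cocycles, not to functions, and the passage from ``Lipschitz harmonic function'' to ``harmonic cocycle in $\ell^2$'' requires finite Dirichlet energy, which sub-exponential growth is far too weak to guarantee. You flag this yourself as ``the main obstacle,'' and it is a genuine one: there is no weighted or non-Hilbertian version of Gournay's result that handles arbitrary sub-exponential functions.

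The paper bypasses representation theory entirely for this step. It proves directly (Proposition~\ref{sub-ex function}) that any sub-exponential $\mu$-harmonic function $h$ satisfies $h(\gamma x)=h(x)$ for every torsion element $\gamma\in Z^{\mathrm{FC}}(G)$, by a short coupling argument: the finite conjugacy class $\langle\gamma\rangle^G$ lies in the support of some $\mu^{(R)}$, one couples the walks from $\gamma x$ and $x$ with geometric coupling time, and the sub-exponential bound kills the residual term. Since in the constructed $\Delta$ the FC-center is torsion and the quotient is $(A\times B)\wr\mathbb{Z}$, this immediately gives the factorization. For the ``in particular'' clause the paper simply invokes \cite{BDCKY}, which already shows $F\wr\mathbb{Z}$ admits no non-constant sublinear harmonic functions; your route via finite energy and $H_{\mathrm{T}}$ is unnecessary, and the claim ``diffusive speed and sublinear growth imply finite energy'' is not justified.
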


A folklore conjecture of Kozma asks if diffusive speed of $\mu$-random walk on a group $G$  is equivalent to absense of non-constant sublinear $\mu$-harmonic functions. The examples of Corollary \ref{cor-function} answer one direction negatively. Because of currently limited understanding of groups with diffusive simple random walk behavior, the other direction of the conjecture (if diffusive speed implies all sublinear harmonic functions are constant) remains open.

In the spirit of the classical Neumann-Neumann embedding \cite{Neumann2}, we show that any countable locally finite group embeds into a group with Shalom's property $H_{\mathrm{FD}}$.

\begin{proposition}\label{NN}
Let $H$ be a countable locally finite group. Then there exists a
two generated infinite amenable group $G$ with Shalom's
property $H_{\mathrm{FD}}$ and containing $H$ as an embedded subgroup.
\end{proposition}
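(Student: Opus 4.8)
The plan is to realise $H$ as a subgroup of a ``diluted lamplighter'', built in the spirit of the classical Neumann--Neumann embedding \cite{Neumann2} and of the constructions of \cite{BZ}. Write $H$ as an increasing union $F_{1}\le F_{2}\le\cdots$ of finite subgroups with $\bigcup_{k}F_{k}=H$, and fix generating sets whose sizes grow in a controlled way (padding if necessary). First I would obtain $2$-generation together with the embedding by the Neumann--Neumann mechanism: use a single ``shift'' generator $t$ of infinite order and a single ``collector'' generator $a$, arranged so that suitable words in the conjugates $t^{j}at^{-j}$ successively generate nested copies $\bar F_{1}\le\bar F_{2}\le\cdots$ of the $F_{k}$, whose union is the desired embedded copy of $H$; the key point, exactly as in \cite{Neumann2}, is that an increasing union of finite groups can be encoded by finitely many elements once conjugation by $t$ is allowed. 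Amenability of $G$ is then automatic (it is (locally finite)-by-$\mathbb Z$, or an increasing union of such groups), and $G$ is infinite because it contains $H$.

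The real work is property $H_{\mathrm{FD}}$, which forces the construction to be done with care. For general $H$ one cannot appeal to Theorem~\ref{main}: it would require $G$ to surject onto $\mathbb Z$ with locally normally finite kernel, but a homomorphism $G\to\mathbb Z$ kills the torsion subgroup $H$, and a subgroup of a locally normally finite group is again locally normally finite, so $H$ itself would have to be locally normally finite --- which fails, e.g., for $H=\mathrm{Sym}(\mathbb Z)$; and indeed the naive extension $\mathrm{Sym}(\mathbb Z)\rtimes\mathbb Z$ already fails $H_{\mathrm{FD}}$ by Proposition~\ref{SymZ}. So a genuine dilution is needed. I would place the copy of $F_{k}$ in $G$ at a geometric scale $r_{k}\uparrow\infty$ comparable to its intrinsic diameter, laid out along the $\mathbb Z$-skeleton so that the part of $G$ visible inside a window of size $W$ has word diameter $O(W)$. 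Then for a symmetric measure $\mu$ of finite generating support, on the event that the underlying $\mathbb Z$-walk stays confined to an interval of length $\epsilon\sqrt n$ --- an event of probability bounded away from $0$ as $n\to\infty$, by Donsker's invariance principle --- the walk on $G$ remains within distance $O(\epsilon\sqrt n)$ of the identity. This yields Erschler--Ozawa's criterion \cite{EO}, $\limsup_{n}\mu^{(n)}(B(e,c\sqrt n))>0$ for every $c>0$, hence property $H_{\mathrm{FD}}$. (When $H$ is itself locally normally finite one may instead make $G$ locally-normally-finite-by-$\mathbb Z$ and quote Theorem~\ref{main} directly, as in Corollary~\ref{cor}.)

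The step I expect to be the main obstacle is making the geometry precise: one must simultaneously (i) encode the nested finite groups $F_{k}$ with finitely many generators, (ii) distort them into $G$ at scales $r_{k}$ growing just fast enough that the ``window of size $W$'' part of $G$ has diameter $O(W)$ --- despite the $F_{k}$, being possibly non-abelian and of unbounded order, having intrinsic diameter tending to infinity --- and (iii) still recover all of $H$. Reconciling (ii) with (iii) is exactly the kind of careful analysis of lamplighter-type metrics carried out in \cite{BZ}, and checking that the confinement estimate for the $\mathbb Z$-walk really transfers into a lower bound on $\mu^{(n)}(B(e,c\sqrt n))$ is the crux of the argument.
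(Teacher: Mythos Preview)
Your embedding strategy is close to the paper's: the paper first passes to Hall's universal locally finite group $C$ (which absorbs every countable locally finite $H$), then realises $C$ inside a two-generated subgroup $\langle f,t\rangle$ of $C\wr\wr\mathbb{Z}$, with $t$ the shift and $f$ a single involution encoding a generating set of involutions $(c_s)$ of $C$ at rapidly growing positions $(k_s)$. So your Neumann--Neumann picture is on target.

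The gap is in your route to $H_{\mathrm{FD}}$. You aim for the Erschler--Ozawa criterion $\limsup_n \mu^{(n)}(B(e,c\sqrt n))>0$ for all $c>0$, via the implication ``$\mathbb{Z}$-range $\le W$ $\Rightarrow$ word length $O(W)$''. This implication cannot hold in any two-generated (locally finite)-by-$\mathbb{Z}$ group that embeds an \emph{infinite} locally finite group. Indeed, once the window $[-W,W]$ is wide enough to manufacture the first $s$ generators $c_1,\dots,c_s$ via commutators, it manufactures an independent copy of $F_s=\langle c_1,\dots,c_s\rangle$ at \emph{every} site of a sub-interval of length $\sim W$; so the set $N_W$ of elements reachable with $\mathbb{Z}$-range $\le W$ has $|N_W|\ge |F_s|^{cW}$. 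If $\mathrm{diam}(N_W)\le CW$ then $|N_W|\le 3^{CW}$, forcing $|F_s|$ bounded --- impossible since $\bigcup_s F_s=H$ is infinite. Placing $F_k$ ``at scale $r_k\sim\mathrm{diam}(F_k)$'' does not help: the obstruction is not how far away $F_k$ first appears, but that once the window sees it, it sees $\sim W$ independent copies of it. Concretely, when the projected walk is confined to $[-\epsilon\sqrt n,\epsilon\sqrt n]$ for $n$ steps, each site is visited $\sim\sqrt n/\epsilon$ times and its lamp is randomised in $F_{s(\epsilon\sqrt n)}$; the resulting word length is of order $\epsilon\sqrt n\cdot\mathrm{diam}(F_{s(\epsilon\sqrt n)})$, and since the latter factor $\to\infty$ no fixed $\epsilon$ works for a given $c$.

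The paper avoids this entirely by invoking a \emph{different} criterion from \cite{EO}, namely \cite[Corollary~2.5]{EO}: it suffices that $\tfrac12\|\mu^{(n)}-\mu^{(n+\delta n)}\|_1\le 1-c$ along some sequence $n_s\to\infty$. This is a mixing statement, not a return statement, and it is insensitive to the size of the lamp group: in $F\wr\mathbb{Z}$ with $F$ finite, once the lamps in the current range are uniformised the law changes little under further steps (Lemma~\ref{random_wreath}), and the constant $c$ is \emph{universal}, independent of $F$. Since $\langle f,t\rangle$ coincides with some $F_s\wr\mathbb{Z}$ on balls of radius $<k_{s+1}-k_s$, choosing $(k_s)$ to grow fast enough lets one transport this estimate to $G$ at a suitable time $n_s$ for each $s$.
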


The proof uses a sufficient
condition for $H_{\mathrm{FD}}$ from Erschler-Ozawa \cite{EO}.

We wish to point out that it is not known whether the lamplighter group $\mathbb{Z}/2\mathbb{Z} \wr \mathbb{Z}^2$ has property $H_{\mathrm{FD}}$. Neither is known an example of a non-Liouville amenable group with property $H_{\mathrm{FD}}$.

The paper is organized as follows. Definitions and notations are given in Section \ref{sec:setting}. Theorem \ref{main} is proved in Subsection \ref{subsec:main_proof} and Corollary \ref{cor-function} is proved in Subsection \ref{subsec: example}. It is proved in Section \ref{wrZd} that $(\mathbb{Z}/2\mathbb{Z})\wr \mathbb{Z}^d$ do not have property $H_{\textrm{FD}}$ when $d \ge 3$. Examples of locally-finite-by-$\mathbb{Z}$ groups without property $H_{\mathrm{FD}}$ are given in Section \ref{sec:noHFD}. Proposition \ref{NN} is derived in Section \ref{NNemb}. In Appendix \ref{appendix}, we prove that the wreath product $H\wr G$ of two infinite finitely generated amenable groups does not have property $H_{\mathrm{FD}}$. This answers another question of Shalom \cite[Section 6]{Shalom}, who had proved it when $H$ has infinite abelianization.

\section{Shalom's property and harmonic cocycles}\label{sec:setting}

We recall definitions and basic facts about Shalom's property $H_{\mathrm{FD}}$. More details can be found in \cite{Shalom}, \cite{Ozawa}, \cite{EO}.

Let $G$ be a finitely generated group and $\pi:G \to \mathcal{U}(\mathcal{H})$ a unitary representation into a Hilbert space $\mathcal{H}$. A function $b:G\to \mathcal{H}$ is a $1$-cocycle if $b(gh)=b(g)+\pi_gb(h)$ for all $g,h$ in $G$, and a $1$-coboundary if there exists $v$ in $\mathcal{H}$ such that $b(g)=v-\pi_gv$ for all $g$ in $G$. Denote $Z^1(G,\pi)$ the vector space of $1$-cocycles and $B^1(G,\pi)$ the subspace of $1$-coboundaries.

The first reduced cohomology of $\pi$ is the quotient space $$\overline{H}^1(G,\pi)=Z^1(G,\pi)/\overline{B^1(G,\pi)},$$ where the closure is taken with respect to uniform convergence on compact (i.e. finite) subsets.

\begin{definition}[Shalom \cite{Shalom}]
A group $G$ has property $H_{\mathrm{FD}}$ (resp. $H_{\mathrm{F}}$, $H_{\mathrm{T}}$) if any unitary representation $\pi$ with $\overline{H^1}(G,\pi) \neq 0$ admits a subrepresentation which is finite dimensional (resp. finite, trivial).
\end{definition}

Given a symmetric finitely supported non-degenerate probability measure $\mu$ on $G$, the scalar product 
\[
\langle b,b'\rangle_{\mu}=\sum_{g\in G} \langle b(g),b'(g) \rangle \mu(g)
\] 
gives a Hilbert space structure on $Z^1(G,\pi)$. The associated topology coincides with uniform convergence on compact subsets.

Harmonic $1$-cocycles were implicitly introduced by Guichardet in \cite {Guich}, where it was observed that every element in the first reduced cohomology $\overline{H}^1(G,\pi)$ is uniquely represented by a $\mu$-harmonic cocycle. A cocycle $b$ is $\mu$-harmonic if 
\[
\sum_{g \in G} b(xg)\mu(g)=b(x) \textrm{ for all }x\in G,
\]
 or equivalently $\sum_{g \in G} b(g)\mu(g)=0$. This happens if and only if $b$ is orthogonal to the space of $1$-coboundaries. So there is a vector space isomorphism $$\overline{H}^1(G,\pi) \simeq \overline{B}^1(G,\pi)^\perp.$$

It follows that a group $G$ has property $H_{\mathrm{FD}}$ if and only if any $\mu$-harmonic cocycle of a weakly mixing representation (i.e. a representation without finite dimensionnal invariant subspace) is zero.

\section{Kernel of harmonic cocycles }
Throughout the rest of the paper, we always assume that $\mu$ is a symmetric probability measure with finite generating support on $G$.

\subsection{Extensions by $\mathbb{Z}$ of locally normally finite groups}\label{subsec:main_proof}

\begin{proposition}\label{main_prop}

Let $G$ be a finitely generated group that fits in an exact sequence
\[
1\to N\to G\to\mathbb{Z}\to1.
\]
Suppose $F$ is a finite normal subgroup of $N$ and the projection of $\mu$ to $\mathbb{Z}$ is the law of a simple random walk.
Let $b:G\to\mathcal{H}$ be a $\mu$-harmonic $1$-cocycle with weakly
mixing linear part $\pi$. Then 
\[
\forall \gamma \in F, \quad b(\gamma)=0.
\]


\end{proposition}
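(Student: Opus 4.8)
The plan is to exploit the finiteness and normality of $F$ to build, from the harmonic cocycle $b$, a finite-dimensional (or trivial) piece of the representation $\pi$ on which the restriction of $b$ to $F$ lives, and then use weak mixing to kill it. Since $F$ is normal in $N$ and $N$ is normal in $G$, conjugation by $G$ permutes the (finitely many) elements of $F$; more precisely, $G$ acts on $F$ by conjugation through a homomorphism $G \to \mathrm{Aut}(F)$ whose image is finite, so a finite-index subgroup $G_0 \le G$ (containing $N$, with $G/G_0$ cyclic) centralizes $F$. Let $t \in G$ project to a generator of $\mathbb Z$; then some power $t^k$ lies in $G_0$ and commutes with every $\gamma \in F$.

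First I would record the cocycle identity consequences of these commutation relations. For $\gamma \in F$ and any $g$ centralizing $\gamma$, applying $b$ to $g\gamma = \gamma g$ gives $b(g) + \pi_g b(\gamma) = b(\gamma) + \pi_\gamma b(g)$, i.e. $\pi_g b(\gamma) - b(\gamma) = \pi_\gamma b(g) - b(g)$. Taking $g = t^k$ we get a relation tying $b(\gamma)$ to $b(t^k)$. Next, since $F$ is finite, consider the finite-dimensional subspace $V = \mathrm{span}\{b(\gamma) : \gamma \in F\} \subseteq \mathcal H$. Using the cocycle relation $b(\gamma_1\gamma_2) = b(\gamma_1) + \pi_{\gamma_1} b(\gamma_2)$ and $b(e)=0$, $b(\gamma^{-1}) = -\pi_{\gamma^{-1}} b(\gamma)$, one checks $\pi_\gamma$ preserves $V$ for every $\gamma \in F$: indeed $\pi_{\gamma_1} b(\gamma_2) = b(\gamma_1\gamma_2) - b(\gamma_1) \in V$. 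So $V$ is a finite-dimensional $\pi(F)$-invariant subspace carrying all the values $b(\gamma)$.

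The heart of the argument is then to promote $\pi(F)$-invariance of $V$ to $\pi(G)$-invariance of (the closure of) a related finite-dimensional space, or else to argue directly that $V$ must be zero. Here I would use harmonicity together with the $\mathbb Z$-direction. Decompose $\mathcal H = V \oplus V^\perp$; the obstruction is that $\pi_t$ need not preserve $V$. But $\pi_t$ conjugates $\pi(F)$ to $\pi(t F t^{-1}) = \pi(F)$ (as $F$ is normal), so $\pi_t V$ is again a $\pi(F)$-invariant subspace, of the same finite dimension, and similarly for all $\pi_{t^j}V$. If these spaces $\{\pi_{t^j} V\}_{j \in \mathbb Z}$ spanned something infinite-dimensional we would need another input; the expected resolution is that harmonicity of $b$ forces $b(\gamma)$ to be $\pi_{t}$-"almost fixed" in an averaged sense, so that the closed span $W = \overline{\mathrm{span}}\{\pi_g b(\gamma): g\in G, \gamma \in F\}$ is in fact finite dimensional — one combines the relation $\pi_{t^k} b(\gamma) - b(\gamma) = \pi_\gamma b(t^k) - b(t^k)$ with the fact that $b$ restricted to $N$ is $\mu_N$-harmonic-like and bounded on the finite set $F$. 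Granting that $W$ is finite dimensional and $\pi(G)$-invariant and nonzero unless all $b(\gamma)=0$, weak mixing of $\pi$ immediately yields $W = 0$, hence $b(\gamma) = 0$ for all $\gamma \in F$, as claimed.

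I expect the main obstacle to be exactly this last step: controlling the $\mathbb Z$-orbit of $V$ and showing the resulting space stays finite dimensional — that is, ruling out "drift" of $b(\gamma)$ under $\pi_t$ using harmonicity. The clean way is probably to show that $\sup_{j} \|b(t^j \gamma t^{-j})\| < \infty$ (immediate, since $t^j\gamma t^{-j}$ ranges over the finite set $F$ and $b$ is a fixed function), deduce that the cocycle restricted to the subgroup generated by $t^k$ and $F$ takes values in a fixed finite-dimensional subspace on which the relevant part of $\pi$ acts, invoke the structure of harmonic cocycles on virtually-$\mathbb Z$-like pieces together with the nondegeneracy of the $\mathbb Z$-projection of $\mu$ being a simple random walk, and then transport $\pi(G)$-invariance via normality. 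Once finite-dimensionality is secured, weak mixing does the rest with no further work.
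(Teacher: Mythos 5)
There is a genuine gap, and in fact the proposal starts from a false structural premise. You assert that since $F\trianglelefteq N$ and $N\trianglelefteq G$, conjugation by $G$ permutes the elements of $F$ and gives a homomorphism $G\to\mathrm{Aut}(F)$ with finite image. But normality is not transitive: $F$ is normal in $N$ only, and $tFt^{-1}$ is in general a \emph{different} finite subgroup of $N$ (the paper denotes it $F^{1}$; all one gets is that $gFg^{-1}$ depends only on $\phi(g)\in\mathbb{Z}$). So there is no finite-index subgroup of $G$ centralizing $F$, and the identity $\pi_{t^k}b(\gamma)-b(\gamma)=\pi_\gamma b(t^k)-b(t^k)$ on which you build is not available. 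More seriously, even granting such an identity, the heart of your argument --- that the closed span $W=\overline{\mathrm{span}}\{\pi_g b(\gamma)\}$ is finite dimensional --- is exactly equivalent to what must be proved and is nowhere established. Boundedness of $\{b(t^j\gamma t^{-j})\}_j$ (which holds trivially) says nothing about the span of the orbit $\{\pi_{t^j}b(\gamma)\}_j$: by the cocycle identity $b(t^j\gamma t^{-j})=b(t^j)+\pi_{t^j}b(\gamma)+\pi_{t^j\gamma}b(t^{-j})$, so the two differ by the unbounded terms $b(t^{\pm j})$, and for a weakly mixing $\pi$ the orbit of any nonzero vector typically spans an infinite-dimensional space. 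Your sketch acknowledges this is "the main obstacle" but offers only the hope that harmonicity resolves it.

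A symptom of the gap is that your argument never uses the hypothesis that the projection of $\mu$ to $\mathbb{Z}$ is a simple random walk, i.e.\ the \emph{recurrence} of the $\mathbb{Z}$-direction; this hypothesis is essential (the analogous statement over a transient base fails, cf.\ Proposition \ref{wreath3}). The paper's proof is of a completely different, probabilistic nature: one couples two $\mu^{(R)}$-walks started at $\gamma$ and at $e$, attempting to merge them (with success probability $\varepsilon>0$ per visit) each time the $\mathbb{Z}$-coordinate returns to $0$; local-time estimates for simple random walk give $\mathbb{P}(\tau>\tau_r)\lesssim 1/(\varepsilon r)$ and show that $F^x\subset\mathcal{S}_{Mr^2}(\delta)$ for $|x|\le r+R$. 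The optional stopping theorem then bounds $\Vert b(\gamma)\Vert$ by $\mathbb{P}(\tau>\tau_r)\sup\{\Vert b(z)\Vert:z\in\mathcal{S}_{Mr^2}(\delta)\}$, and Lemma \ref{byOzawa} (the $o(\sqrt{n})$ growth of harmonic cocycles with weakly mixing coefficients on $\mathcal{S}_n(\delta)$) makes this tend to $0$ as $r\to\infty$. Both the coupling and the Ozawa-type growth estimate are absent from your proposal, and without some substitute for them the argument cannot be completed.
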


The proof of Proposition \ref{main_prop} follows the standard coupling argument as in \cite[Theorem 1]{BDCKY}, 
which shows that the lamplighter group $F\wr \mathbb{Z}$ with $F$ finite does not admit non-constant sublinear harmonic functions.
The following lemma adapted from \cite[Proposition]{Ozawa} provides information on the growth of a harmonic cocycle 
with weakly mixing representation.
It roughly says that a harmonic cocycle with weakly mixing representation grows slower than $n^{1/2}$ 
on the set that doesn't translate $\mu^{(n)}$ too far. 

\begin{lemma}[{\cite[Proposition]{Ozawa}}]\label{byOzawa}

Let $b:G\to\mathcal{H}$ be a $\mu$-harmonic cocycle with 
weakly mixing representation $\pi$. 
Let $0<\delta<\frac{1}{3}$ be a constant.
Define 
\[
\mathcal{S}_{n}(\delta):=\left\{g\in G:\ \frac{1}{2}\left\Vert \mu^{(n)}-g\mu^{(n)}\right\Vert _{1}<\delta\right\}.
\]
Then 
\[
\frac{1}{\sqrt{n}}\sup_{g\in\mathcal{S}_{n}(\delta)}\left\Vert b(g)\right\Vert _{\mathcal{H}}\longrightarrow0\ \mbox{as }n\to\infty.
\]

\end{lemma}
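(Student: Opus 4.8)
The plan is to reduce the uniform statement to a single, $g$-independent spectral quantity — the sublinear growth in operator norm of the ``second-moment operator'' of $b$ along the walk — and to feed weak mixing in only through that operator. Throughout, $X_n$ denotes the $\mu$-random walk at time $n$ (so $X_n$ has law $\mu^{(n)}$), and I define the positive trace-class operator $B_n$ by its quadratic form $\langle B_n v,v\rangle=\mathbb{E}\,|\langle b(X_n),v\rangle|^2$, noting $\mathrm{Tr}\,B_n=\mathbb{E}\|b(X_n)\|^2<\infty$.

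\textbf{Step 1: reduction via the definition of $\mathcal{S}_n(\delta)$.} For $g\in\mathcal{S}_n(\delta)$ the total variation distance between $\mu^{(n)}$ and $g\mu^{(n)}$ is less than $\delta$, so a maximal coupling provides two copies $X_n,X_n'$, each of law $\mu^{(n)}$, with $\mathbb{P}(E)>1-\delta$ for the event $E=\{X_n=gX_n'\}$. On $E$ the cocycle relation gives $b(g)=b(X_n)-\pi_g b(X_n')$. Pairing this with $b(g)$, taking expectations and applying Cauchy--Schwarz in the probability space, I would get $\|b(g)\|^2\,\mathbb{P}(E)\le\big(\sigma_n(b(g))+\sigma_n(b(g^{-1}))\big)\,\mathbb{P}(E)^{1/2}$, where $\sigma_n(v)^2:=\mathbb{E}|\langle b(X_n),v\rangle|^2=\langle B_n v,v\rangle$. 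Since $\|b(g^{-1})\|=\|b(g)\|$ and $\sigma_n(v)\le\|B_n\|_{\mathrm{op}}^{1/2}\|v\|$, this collapses to the $g$-free bound $\|b(g)\|\le 2(1-\delta)^{-1/2}\|B_n\|_{\mathrm{op}}^{1/2}$. Hence it suffices to prove $\|B_n\|_{\mathrm{op}}=o(n)$.

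\textbf{Step 2: the completely positive channel.} Harmonicity makes $b(X_n)$ a martingale with orthogonal increments, and conditioning on $X_k$ yields the recursion $B_n=\sum_{k=0}^{n-1}\Phi^k(Q)$, where $\Phi(T)=\sum_g\mu(g)\,\pi_g T\pi_g^{*}$ is the unital, trace-preserving completely positive map attached to $(\pi,\mu)$ and $Q=B_1$ is determined by $\langle Qv,v\rangle=\sum_g\mu(g)|\langle b(g),v\rangle|^2$. Because $\mathrm{supp}\,\mu$ is finite, $Q$ is supported on the \emph{fixed} finite-dimensional space $V_0=\mathrm{span}\{b(g):g\in\mathrm{supp}\,\mu\}$, and a one-line estimate gives $Q\le C\,P_{V_0}$ with $C=\max_g\|b(g)\|^2$ and $P_{V_0}$ the orthogonal projection onto $V_0$. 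Positivity of $\Phi^k$ then yields $\|B_n\|_{\mathrm{op}}\le C\sum_{k=0}^{n-1}\|\Phi^k(P_{V_0})\|_{\mathrm{op}}$, so it is enough to show $\frac1n\sum_{k<n}\|\Phi^k(P_{V_0})\|_{\mathrm{op}}\to 0$ for the single finite-rank projection $P_{V_0}$.

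\textbf{Step 3: weak mixing.} I would bound the operator norm by the Hilbert--Schmidt norm and, for an orthonormal basis $(e_i)$ of $V_0$, compute
\[
\|\Phi^k(P_{V_0})\|_{\mathrm{HS}}^2=\sum_{i,j}\sum_{y}\mu^{(2k)}(y)\,|\langle\pi_y e_i,e_j\rangle|^2 .
\]
Since $\pi$ has no finite-dimensional subrepresentation, $\pi\otimes\bar\pi$ has no nonzero invariant vector, whence $(\pi\otimes\bar\pi)(\mu^{(m)})\to 0$ in the weak operator topology; this is precisely the decay of each squared matrix coefficient $\sum_y\mu^{(2k)}(y)|\langle\pi_y e_i,e_j\rangle|^2\to0$. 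As there are only $(\dim V_0)^2$ pairs, $\|\Phi^k(P_{V_0})\|_{\mathrm{HS}}\to0$, hence $\|\Phi^k(P_{V_0})\|_{\mathrm{op}}\to0$, and Cesàro summation of this null sequence gives the required $o(n)$, completing $\|B_n\|_{\mathrm{op}}=o(n)$ and thus the lemma.

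I expect the main obstacle to be exactly the uniformity over the \emph{moving} family $\{b(g):g\in\mathcal{S}_n(\delta)\}$: one cannot apply weak-mixing decay to the test vector $v=b(g)$ directly, because that vector changes with $n$. The device that overcomes this is the passage in Step 1 to the $g$-independent operator $B_n$, combined with the fact that finiteness of $\mathrm{supp}\,\mu$ confines $Q$ to a fixed finite-dimensional subspace $V_0$; only then does pointwise matrix-coefficient decay upgrade, via the Hilbert--Schmidt norm, to genuine operator-norm decay of $\Phi^k(P_{V_0})$, which is what makes $\|B_n\|_{\mathrm{op}}$ truly sublinear rather than merely $O(n)$ (every soft argument I know yields only the $O(\sqrt n)$ bound $\|b(g)\|\lesssim\sqrt{\delta n}$). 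A secondary technical point is to guarantee there is no spectral atom at $-1$ for $(\pi\otimes\bar\pi)(\mu)$: an eigenvector $\xi$ there would force $\xi^{*}\xi$ to be a nonzero compact operator commuting with $\pi$, producing a finite-dimensional subrepresentation and contradicting weak mixing, so this cannot occur.
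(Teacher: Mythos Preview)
Your proof is correct and, at heart, hinges on the same quantity as the paper's: both arguments reduce the lemma to the operator-norm statement $\|B_n\|_{\mathrm{op}}=o(n)$, where $\langle B_n v,v\rangle=\sum_x\mu^{(n)}(x)|\langle b(x),v\rangle|^2$ (this is exactly the content of \cite[Lemma]{Ozawa}, which the paper simply quotes). The two proofs differ in how they pass from this to the conclusion and in whether they reprove Ozawa's lemma. The paper takes the operator-norm bound as a black box and then, for each unit vector $\xi$, uses Chebyshev to get $\mu^{(n)}(E_\xi^c)\le\delta$ for $E_\xi=\{x:|\langle b(x),\xi\rangle|\le\epsilon\sqrt n\}$, followed by a pigeonhole step $(g^{-1}E_\xi)\cap E_{\pi_g^\ast\xi}\neq\emptyset$ (this is where the threshold $\delta<\tfrac13$ enters) and the cocycle identity. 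Your Step~1 replaces this by a maximal coupling and a Cauchy--Schwarz estimate; this is cleaner, avoids the auxiliary sets $E_\xi$, and in fact works for any $\delta<1$, so it yields a marginally stronger statement. Your Steps~2--3 go further than the paper by giving a self-contained proof of $\|B_n\|_{\mathrm{op}}=o(n)$ via the channel decomposition $B_n=\sum_{k<n}\Phi^k(Q)$ with $Q\le C\,P_{V_0}$ and the Hilbert--Schmidt identity $\|\Phi^k(P_{V_0})\|_{\mathrm{HS}}^2=\sum_{i,j}\sum_y\mu^{(2k)}(y)|\langle\pi_y e_i,e_j\rangle|^2$; this is a nice repackaging of Ozawa's argument. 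One small point: the sentence ``$(\pi\otimes\bar\pi)(\mu^{(m)})\to0$ in WOT because $\pi\otimes\bar\pi$ has no invariant vector'' only rules out the $+1$-eigenspace; you do handle the $-1$-eigenspace in your final paragraph, but strictly speaking the eigenvector $\xi$ there intertwines $\pi$ restricted to the index-$\le2$ subgroup $\langle\mathrm{supp}\,\mu^{(2)}\rangle$ rather than all of $G$ --- this is harmless since a finite-dimensional subrepresentation of a finite-index subgroup induces one for $G$, but it is worth saying.
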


\begin{proof}
Since the formulation is different from the original statement in \cite{Ozawa}, we include a proof here for completeness.
By \cite[Lemma]{Ozawa}, 
\[
\sup_{\xi,\left\Vert \xi\right\Vert =1}\frac{1}{n}\sum_{x}\mu^{(n)}(x)\left|\left\langle b(x),\xi\right\rangle \right|^{2}\longrightarrow 0.
\]
It follows that given $\delta$, for any $\epsilon>0$, there exists
a constant $N$ such that for all $n\ge N$,$ $
\[
\sup_{\xi,\left\Vert \xi\right\Vert =1}\sum_{x}\mu^{(n)}(x)\left|\left\langle b(x),\xi\right\rangle \right|^{2}\le\epsilon^{2}\delta n.
\]

For any unit vector $\xi$, let 
\[
E_{\xi}=\left\{ x\in G:\ \left|\left\langle b(x),\xi\right\rangle \right|\le\epsilon n^{1/2}\right\} .
\]
Then 
\[
\mu^{(n)}(E_{\xi}^{c})\le\frac{1}{\epsilon^{2}n}\sum_{x}\mu^{(n)}(x)\left|\left\langle b(x),\xi\right\rangle \right|^{2}\le\delta.
\]
Since for any $g\in\mathcal{S}_{n}(\delta)$, the total variation
distance between $\mu^{(n)}$ and $g\mu^{(n)}$ is less than $\delta$,
we have 
\[
g\mu^{(n)}(E_{\xi})>\mu^{(n)}(E_{\xi})-\delta\ge1-2\delta.
\]
That is $\mu^{(n)}(g^{-1}E_{\xi})>1-2\delta$. Therefore $\mu^{(n)}(g^{-1}E_{\xi})+\mu^{(n)}(E_{\pi_{g}^{\ast}\xi})>1-2\delta+1-\delta>1$,
it follows that 
\[
\left(g^{-1}E_{\xi}\right)\cap E_{\pi_{g}^{\ast}\xi}\neq\emptyset.
\]
Take an element $x_{\xi}\in\left(g^{-1}E_{\xi}\right)\cap E_{\pi_{g}^{\ast}\xi}$.
By the cocycle identity, 
\[
\left\langle b(g),\xi\right\rangle =\left\langle b(gx_{\xi}),\xi\right\rangle -\left\langle \pi_{g}b(x_{\xi}),\xi\right\rangle =\left\langle b(gx_{\xi}),\xi\right\rangle -\left\langle b(x_{\xi}),\pi_{g}^{\ast}\xi\right\rangle .
\]
Since $gx_{\xi}\in E_{\xi}$ and $x_{\xi}\in E_{\pi_{g}^{\ast}\xi}$,
we have 
\[
\left|\left\langle b(g),\xi\right\rangle \right|\le2\epsilon n^{1/2}.
\]
Since this is true for all unit vectors $\xi$, we conclude that for
all $n>N$, any $g\in\mathcal{S}_{n}(\delta)$, $\left\Vert b(g)\right\Vert \le2\epsilon n^{1/2}$.
\end{proof}

\begin{proof}[Proof of Proposition \ref{main_prop}] Denote $\phi:G\to \mathbb{Z}$ the mapping of the exact sequence.
The subgroup $F$ is normal in $N$, thus for $g,g'$ in $G$ with $\phi(g)=\phi(g')$, we have $gFg^{-1}=g'F(g')^{-1}$. Therefore we can write $F^{x}$ for the conjugation $F^{x}=gFg^{-1}$ where $\phi(g)=x \in \mathbb{Z}$. 

As $\mu$ is non-degenerate, there is some integer $R$ such that $F \subset \textrm{supp}(\mu^{(R)})$. We set:
\[
\varepsilon:=|F|\inf_{z \in F} \mu^{(R)}(z)>0.
\]

Given an element $\gamma\in F^{x}$, we consider a coupling of two $\mu^{(R)}$-random
walks starting from $\gamma$ and $e$. We write the two trajectories
as $\gamma Z_n$ and $\tilde{Z}_n$ where $Z_n=X_1\ldots X_n$ and  $\tilde{Z}_n=\tilde{X}_{1}\ldots\tilde{X}_{n}$ are defined as follows, satisfying $\phi(Z_{n})=\phi(\tilde{Z}_{n})$ at all times. As long as case (c) has not happened, we set:
\begin{enumerate}[label=(\alph*)]
\item if $\phi(Z_n)\neq x$, then $\tilde{X}_{n+1}=X_{n+1}$ distributed as $\mu^{(R)}$,
\item if $\phi(Z_n)= x$, then with $1-\varepsilon$ probability $\tilde{X}_{n+1}=X_{n+1}$ distributed as $\frac{1}{1-\varepsilon}\left(\mu^{(R)}-\varepsilon \mathbf{u}_F\right)$, where $\mathbf{u}_F$ is the uniform distribution on the finite group~$F$,
\item if $\phi(Z_n)= x$, with $\varepsilon$ probability $X_{n+1}$ is uniformly distributed in $F$ and $\tilde{X}_{n+1}=\tilde{Z}_{n}^{-1}\gamma Z_nX_{n+1}$, implying $\gamma Z_{n+1}=\tilde{Z}_{n+1}$.
\end{enumerate}

Once case (c) happened, the random walks are coupled and we just set $\tilde{X}_{n+1}=X_{n+1}$ distributed as $\mu^{(R)}$.
By construction, we have 
\begin{eqnarray}\label{position}
\forall n, \quad \tilde{Z}_{n}^{-1}\gamma Z_{n}\in F^{x-\phi(Z_{n})}.
\end{eqnarray}
In particular at the times when the coupling attempt is performed, $\phi(Z_{n})=x$, the difference is in $F$. 

Let $\tau$ be
the coupling time. Every time the projection of the $\mu^{(R)}$-random walk on $\mathbb{Z}$
visits $x$, the chance to succeed is $\varepsilon$.  The distribution of the local time of simple random walk on the integers is known
explicitly. For the $R$-convolution, the local time $L(x,n)=|\{0\le m \le n : \phi(Z_m)=x\}|$, differs by at most a multiplicative constant $C=C(R)$. For
$x\le c\sqrt{n}$, 
\begin{eqnarray*}
\mathbb{P}(\tau >n) &=&\mathbb{P}(L(n,x)=0)+\sum_{k=1}^{n-x}\mathbb{P}(L(n,x)=k)(1-\varepsilon)^{k} \\
&\le& \frac{C}{2^{n}}\sum_{k=0}^{x-1} \left(\begin{array}{c}
n\\
\left\lfloor \frac{n-k}{2}\right\rfloor \end{array}\right) +\sum_{k=1}^{n-x}\frac{1}{2^{n-k}}\left(\begin{array}{c}
n-k\\
\left\lfloor (n+x)/2\right\rfloor
\end{array}\right)
(1-\varepsilon)^{k/C},
\\
& \le & \frac{C}{n^{1/2}}\left(|x|+\frac{1}{\varepsilon}\right),
\end{eqnarray*}
using successively \cite[Theorems 9.1, 9.4 and 2.8]{Revesz2013}. 

It follows that for $|x|\leq r+R$,  for any $\delta>0$, there is an integer $M=M(\delta)>0$ such that 
starting from any  $\gamma$ in such $F^x$, 
$$\mathbb{P}(\tau>Mr^2)<\delta.$$ 
This implies the total variation distance is bounded by
\[
\frac{1}{2}\left\Vert \mu^{(Mr^2)}-g\mu^{(Mr^2)}\right\Vert _{1}\le\mathbb{P}(\tau>Mr^2)\le \delta.
\]
With the notations of Lemma \ref{byOzawa}
\begin{eqnarray}\label{Sdelta}
\bigcup_{|x|\leq r+R} F^x \subset \mathcal{S}_{Mr^2}(\delta).
\end{eqnarray}

Let $\tau_r$ be the exit time of the interval $[-r,r]$. For $|x| \leq cr$, $0<c<1/2$, by standard resistance calculation, we have a similar estimates that for some constant $C'=C'(R)$,
\begin{eqnarray}\label{coupling_by_exit}
\mathbb{P}\left(\tau>\tau_r\right)\le \frac{C'}{r}\left(|x|+\frac{1}{\varepsilon}\right).
\end{eqnarray}

Now let $\gamma$ belong to $F$, so $x=0$. By optional stopping theorem, 
\begin{eqnarray}\label{ost}
\Vert b(\gamma)-b(e)\Vert= \Vert \mathbb{E}[b(\gamma Z_{\tau_r})-b(\tilde{Z}_{\tau_r})]\Vert \le \mathbb{E}\Vert b(\gamma Z_{\tau_r})-b(\tilde{Z}_{\tau_r})\Vert.
\end{eqnarray}
By cocycle identity $b(\gamma Z_{\tau_r})=b(\tilde{Z}_{\tau_r}\tilde{Z}_{\tau_r}^{-1}\gamma Z_{\tau_r})=b(\tilde{Z}_{\tau_r})+\pi_{\tilde{Z}_{\tau_r}}b(\tilde{Z}_{\tau_r}^{-1}\gamma Z_{\tau_r})$. Thus
\[
\Vert b(\gamma)-b(e)\Vert\le \mathbb{E} \Vert b(\tilde{Z}_{\tau_r}^{-1}\gamma Z_{\tau_r})\Vert \leq \mathbb{P}(\tau >\tau_r) \sup\{\Vert b(g)\Vert: g \in F^{-\phi(Z_{\tau_r})}\}.
\]
by (\ref{position}). We can use (\ref{coupling_by_exit}) and (\ref{Sdelta}), as $|\phi(Z_{\tau_r})|\le r+R$, to bound:
\[
\Vert b(\gamma)-b(e)\Vert \le \frac{C'}{\varepsilon r}\sup\{\Vert b(z)\Vert : z \in \mathcal{S}_{Mr^2}(\delta)\}\underset{r \to \infty}{\longrightarrow} 0.
\]
The limit is zero by Lemma \ref{byOzawa}.
\end{proof}

We observe from line (\ref{ost}) that a harmonic function $h:G\to \mathcal{H}$ (not necessarily a cocycle) satisfying $\frac{1}{n}\max\{\Vert h(x)-h(y)\Vert:d(x,y)\le n\}\longrightarrow 0$ must factorize by $F$.

\begin{proof}[Proof of Theorem \ref{main}]
The extension $1\to L \to G \to \mathbb{Z} \to 1$ is necessarily split, as there exists $g_0$ in $G$ with $\phi(g_0)=1$. Thus $G$ admits a symmetric generating set with projection to $\mathbb{Z}$ included in $\{-1,+1\}$. We take $\mu$ supported on it and apply Proposition \ref{main_prop}. 

For every weakly mixing representation $\pi$, any $\mu$-harmonic cocycle $b$ factorizes by $L$, i.e. $b|_L=0$. Moreover, for any $g$ in $G$ and $h$ in $L$, there is $h'$ in $L$ such that $hg=gh'$, so
\[
b(g)=b(g)+\pi_gb(h')=b(gh')=b(hg)=b(h)+\pi_hb(g)=\pi_hb(g).
\]
Therefore restricted to $\overline{\textrm{Im}(b)}$, the representation $\pi$ factorizes by $L$. If $b\neq 0$, then $\pi|_{\overline{\textrm{Im}(b)}}$ admits a trivial subrepresentation as $\mathbb{Z}$ has property $H_{\mathrm{T}}$. This proves $H_{\mathrm{T}}$ for $G$.
\end{proof}

\subsection{A variant on harmonic functions}\label{subsec: variant}

By \cite[Theorem 4.7]{Gournay}, any harmonic cocycle with weakly mixing linear part must vanish on the FC-center. In this subsection we formulate a variant of this result which states that any harmonic function with sub-exponential growth is constant on the subset of torsion elements in the FC-center. 

Recall that the FC-center $Z^{FC}(G)$ is the subgroup of elements with a finite conjugacy class. A function $h:G\to \mathbb{R}$ has {\it subexponential growth} if
\[
e^{-cn}M_h(n) \underset{n \to \infty}{\longrightarrow} 0, \textrm{ for all }c>0,
\]
where $M_h(n)=\max\{|h(x)|:d(e,x)\le n\}$. A function $h:G\to \mathbb{R}$ has {\it sublinear growth} if 
$M_h(n)/n\to 0$ as $n\to\infty$.
\begin{proposition}\label{sub-ex function}
Let $G$ be a finitely generated group with a finitely supported symmetric non-degenerate probability measure $\mu$. Let $h:G\to \mathbb{R}$ be a $\mu$-harmonic function of subexponential growth. Then for any $\gamma$ in $Z^{FC}(G)$ of finite order and for any $x$ in $G$, one has $h(\gamma x)=h(x)$.
\end{proposition}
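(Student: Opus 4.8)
The plan is to mimic the coupling argument from the proof of Proposition~\ref{main_prop}, but now adapted to a harmonic \emph{function} of subexponential growth rather than a harmonic cocycle with weakly mixing linear part. Fix $\gamma \in Z^{FC}(G)$ of finite order and let $F$ be the finite subgroup generated by the conjugacy class of $\gamma$; since $\gamma$ has a finite conjugacy class and finite order, $F$ is a \emph{finite normal} subgroup of $G$ (normality is automatic because $F$ is generated by a conjugation-invariant set). This replaces the ad hoc $F \subset N$ from the extension setting, and crucially there is no longer a $\mathbb{Z}$-quotient governing things, so the ``local time'' estimate must be replaced by a direct return-probability argument.

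The key steps, in order. First, choose $R$ with $F \subset \operatorname{supp}(\mu^{(R)})$ and set $\varepsilon = |F|\inf_{z\in F}\mu^{(R)}(z) > 0$ exactly as before. Second, set up the coupling of a $\mu^{(R)}$-walk from $\gamma$ and one from $e$: at \emph{every} step (not only when a $\mathbb{Z}$-coordinate hits a target, since here there is no such coordinate) attempt to couple, using the decomposition $\mu^{(R)} = \varepsilon\, \mathbf{u}_F + (1-\varepsilon)(\text{the rest})$ so that with probability $\varepsilon$ the increment is uniform on $F$ and the two walks can be synchronized (using that the discrepancy always lies in a coset of the normal subgroup $F$, which stays true here because $F \trianglelefteq G$). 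Consequently the coupling time $\tau$ is dominated by a geometric variable: $\mathbb{P}(\tau > n) \le (1-\varepsilon)^{\lfloor n/??\rfloor}$ — more carefully, $\mathbb{P}(\tau > n)$ decays at least like $(1-\varepsilon)^{c n}$ for some $c>0$ depending only on $R$, hence exponentially. Third, run the coupled walks for $n$ steps and apply the martingale/optional-stopping identity for the harmonic function $h$:
\[
|h(\gamma x) - h(x)| = \bigl|\mathbb{E}\,[\,h(\gamma x Z_n) - h(x \tilde Z_n)\,]\bigr| \le \mathbb{E}\,\bigl|h(\gamma x Z_n) - h(x \tilde Z_n)\bigr|,
\]
where after the coupling time the two points coincide, so only the event $\{\tau > n\}$ contributes; on that event bound $|h(\gamma x Z_n) - h(x\tilde Z_n)| \le 2 M_h\bigl(\|x\| + cn\bigr)$ for a suitable linear bound on how far the walks can travel in $n$ steps (with overwhelming probability; the rare far-excursions are killed by an extra union bound). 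Fourth, combine: $|h(\gamma x) - h(x)| \lesssim \mathbb{P}(\tau > n)\cdot M_h(O(n)) + (\text{negligible tail}) \le (1-\varepsilon)^{cn} M_h(O(n)) \to 0$ since $h$ has subexponential growth. This forces $h(\gamma x) = h(x)$.

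The main obstacle I expect is making the second-moment/tail control in the optional-stopping step clean: one needs that the bad event $\{\tau > n\}$ has exponentially small probability \emph{and simultaneously} that on a further, still exponentially-likely, sub-event the walk has not traveled much more than linearly, so that the product with $M_h$ stays controlled by $e^{-cn}M_h(Cn)$, which the subexponential hypothesis sends to $0$. Concretely one should write, for each $n$, $|h(\gamma x)-h(x)| \le 2\,\mathbb{E}\bigl[\mathbf{1}_{\tau>n}\, M_h(\|x\| + |Z_n| \vee |\tilde Z_n|)\bigr]$, split according to whether $|Z_n|\vee|\tilde Z_n| \le Kn$ or not, bound the first piece by $2(1-\varepsilon)^{cn} M_h(\|x\|+Kn)$ and the second by $2\,\mathbb{P}(|Z_n| > Kn)\cdot M_h(Cn)$ using the (exponential) large-deviation bound for the displacement of a finitely supported random walk, and then let $n\to\infty$. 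A secondary point worth stating carefully is that $h$ harmonic for $\mu$ implies $h$ harmonic for $\mu^{(R)}$, so $h(g Z_n)$ is a martingale and optional stopping (or simply the $n$-step identity $h(g) = \sum h(gw)\mu^{(Rn)}(w)$) applies directly without stopping times at all — this is the same simplification noted after line~(\ref{ost}) in the paper, so one can avoid stopping times entirely and work with fixed time $n$.
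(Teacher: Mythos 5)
Your argument is correct and is essentially the coupling argument the paper itself uses: run two $\mu^{(R)}$-walks from $\gamma x$ and $x$, attempt to synchronize at every step with success probability $\varepsilon>0$, so the non-coupling probability is $(1-\varepsilon)^n$, and then beat $M_h(Cn)$ using subexponential growth. The one substantive difference is the finite conjugation-invariant set in which you confine the discrepancy $(x\tilde Z_n)^{-1}\gamma x Z_n$. You take $F$ to be the subgroup generated by the conjugacy class of $\gamma$ and assert it is finite; that is true, but it is Dietzmann's lemma (the normal closure of a torsion element with finite conjugacy class is finite) and needs to be cited or proved --- normality is indeed automatic, finiteness is not. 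The paper sidesteps this by working with the set $\langle\gamma\rangle^{G}=\bigcup_{i=1}^{r}\langle\gamma\rangle^{g_i}$, a finite union of conjugate cyclic subgroups, which is obviously finite and suffices because before coupling one has $\tilde Z_n=Z_n$, so the discrepancy is literally a conjugate of $\gamma$ and the cancellation can be performed inside the single cyclic group $\langle\gamma\rangle^{g_i}$ containing it. Your version buys a cleaner normal-subgroup bookkeeping at the price of an extra (standard) lemma; the paper's buys self-containedness. Finally, the tail-splitting you worry about at the end is unnecessary: $\mu^{(R)}$ is finitely supported, so $d(e,Z_n)\le Cn$ holds deterministically and the bound $|h(\gamma x)-h(x)|\le 2(1-\varepsilon)^n M_h(d(e,x)+Cn)$ follows with no large-deviation input, exactly as in the paper's last display.
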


The proof is an adaptation of the coupling argument used for Proposition~\ref{main_prop}.

\begin{proof}
As the conjugacy class of $\gamma$ is finite, we have $g_1,\dots,g_r$ in $G$ such that
\[
\langle \gamma \rangle^G=\bigcup_{i=1}^r \langle \gamma \rangle^{g_i} \textrm{, and } \langle \gamma \rangle^{g_i} \cap \langle \gamma \rangle^{g_j}=\{e\} \textrm{ whenever } i\neq j,
\]
and this union is a finite set, hence contained in the support of $\mu^{(R)}$ for some finite $R$. Set $\varepsilon:=\textrm{order}(\gamma)\inf\{\mu^{(R)}(x):x\in \langle \gamma \rangle^G\}>0$. 

We couple two $\mu^{(R)}$-random walks $\gamma x Z_n$ and $x\tilde{Z}_n$ as follows. Always sample $X_{n+1}$ according to $\mu^{(R)}$. As long as matching case (b) has not happened $\tilde{Z}_n=Z_n$, so for each $n$ there are integers $i, j$ such that $(x\tilde{Z}_n)^{-1}\gamma xZ_n=(\gamma^j)^{g_{i}}\in \langle \gamma\rangle^{g_{i}}$.
\begin{enumerate}[label=(\alph*)]
\item With probability $1-\varepsilon$, the sample $X_{n+1}$ is not in $\langle \gamma\rangle^{g_{i}}$, then set $\tilde{X}_{n+1}=X_{n+1}$,
\item with probability $\varepsilon$, the sample $X_{n+1}=(\gamma^s)^{g_{i}}$ belongs to $\langle \gamma\rangle^{g_{i}}$, then set $\tilde{X}_{n+1}=(\gamma^{-s-j})^{g_i}$, so $x\tilde{Z}_{n+1}=\gamma x Z_{n+1}$.
\end{enumerate}
Once case (b) occured the increments always agree. The probability not to couple by time $n$ is $(1-\varepsilon)^n$. By the classical martingale argument: 
\[
|h(\gamma x)-h(x)| \leq \mathbb{E}|h(\gamma x Z_n)-h(x \tilde{Z}_n)| \leq (1-\varepsilon)^n M_h(C n) \underset{n \to \infty}{\longrightarrow} 0,
\]
where $C$ is the diameter of the support of $\mu^{(R)}$.
\end{proof}

\subsection{Examples of groups with property $H_{\rm{T}}$}\label{subsec: example}

As mentioned in the Introduction, the groups introduced in \cite{BZ} with prescribed random walk behavior are locally-normally-finite-by-$\mathbb{Z}$, therefore by Theorem \ref{main} they have Shalom's property $H_{\mathrm{T}}$. Here we introduce a modification of the construction, which produces groups which are FC-central extensions of lamplighter groups, moreover elements in the FC-center are torsion. 

These groups are constructed by taking diagonal products of a sequence of marked groups. Let $(G_s)_{s\ge 1}$ be a sequence of marked groups, each $G_s$ marked with a $\ell$-tuple of generators $\mathcal{T}_s=(t_1(s),...,t_{\ell}(s))$. The diagonal product $\Delta$ of $(G_s)_{s\ge 1}$ (also called the universal group of this sequence) is the quotient $\mathbf{F}_{\ell} /\cap_{s}\ker\left(\boldsymbol{\pi}_{s}\right)$, with
the projection map $\boldsymbol{\pi}_s:\mathbf{F}_{\ell} \to G_s$ sending the generators of the free group on $(t_1,...,t_{\ell})$ to the marked generators of $G_s$.

We recall some notations for wreath products. An element in the (restricted) wreath product $H\wr G$ is represented by a pair $(f,x)$ 
 where $f:G\to H$ is a function of finite support and $x\in G$. We refer to $f$ as
the lamp configuration and $x$ as the position of the cursor. The
product rule is 
\[
(f,x)(g,y)=(f\tau_xg,xy),\textrm{ where } \tau_xg(z)=g(zx^{-1}).
\]
The neutral element is denoted as $\left(\boldsymbol{e}, e_G\right)$ where $\mbox{support}(\boldsymbol{e})$
is the empty set. For $x\in G$ and $\gamma\in H$, we denote
by $\gamma\delta_{x}$ the function taking value $\gamma$ at $x$
and $e_{H}$ elsewhere. When $H$ is abelian we write $0$ for its identity element and use additive notation.

Let $A=\{a_{1},\dots,a_{|A|}\}$ and $B=\{b_{1},\dots,b_{|B|}\}$
be two finite groups. Let $\left\{ \Gamma_{s}\right\} $ be a sequence
of groups such that each $\Gamma_{s}$ is marked with a generating
set of the form $A(s)\cup B(s)$ where $A(s)$ and $B(s)$ are finite
subgroups of $\Gamma_{s}$ isomorphic respectively to $A$ and $B$.
We fix the isomorphic identification and write $A(s)=\{a_{1}(s),\dots,a_{|A|}(s)\}$
and similarly for $B(s)$. We impose the assumption 
\[
\Gamma_{s}/[A(s),B(s)]^{\Gamma_{s}}\simeq A(s)\times B(s)\simeq A\times B.
\]
That is, the relative abelianization, which is always a quotient of $A\times B$,
is in fact isomorphic to $A\times B$.

In \cite{BZ}, the factor groups $(G_s)$ are taken to be wreath products $\Gamma_s\wr \mathbb{Z}$ 
with lamp groups $\Gamma_s$ and marked generating tuple carefully chosen to follow prescribed speed function. 
Here instead we take each factor group to be a finite group 
$G_s=\Gamma_s \wr (\mathbb{Z}/m_s\mathbb{Z})$.
The purpose is exactly to produce extensions with finite conjugacy classes. 

The inputs into the construction are two sequences of strictly increasing positive 
integers  $\left(k_{s}\right)_{s\ge 1}$ and  $\left(m_{s}\right)_{s\ge 1}$ such that $m_s\ge 2k_s$ for all $s\ge 1$; 
and a sequence of finite groups $\Gamma_s$ marked with generating set $(A(s),B(s))$ satisfying the assumptions described above.

Take the wreath product $G_s=\Gamma_s \wr \mathbb{Z}/m_s\mathbb{Z}$.
The generating tuples are marked as follows:
\[
\mathcal{T}_{s}=\left(\tau(s),\alpha_{1}(s),\ldots,\alpha_{|A|}(s),\beta_{1}(s),\ldots,\beta_{|B|}(s)\right)
\]
where $\tau(s)=\left(\boldsymbol{e},+1\right)$ and 
\[
\alpha_{i}(s)=\left(a_{i}(s)\delta_{0},0\right),1\leq i\leq|A|,\ \beta_{i}(s)=\left(b_{i}(s)\delta_{k_{s}},0\right),1\leq i\leq|B|.
\]
The group we consider is the diagonal product $\Delta$ of the sequence $((G_s,\mathcal{T}_s))_{s\ge 1}$.

By construction, the group $\Delta$ is an FC-central extension of $(A\times B)\wr \mathbb{Z}$, and the FC-center consists of torsion elements.
\begin{fact}\label{factFC} Let  $\left(k_{s}\right)_{s\ge 1}$, $\left(m_{s}\right)_{s\ge 1}$ and  $(\Gamma_s)_{s\ge 1}$ be as above,
and $\Delta$ be the diagonal product of marked groups $((G_s,\mathcal{T}_s))_{s\ge 1}$. 
Then the FC-center of $\Delta$ is
$$Z^{\mathrm{FC}}(\Delta)=\oplus_{s\ge0}\ker\left(\Gamma_s\to A(s)\times B(s)\right)^{m_s},$$
and
$$\Delta/Z^{\mathrm{FC}}(\Delta)\simeq (A\times B)\wr \mathbb{Z}.$$
\end{fact}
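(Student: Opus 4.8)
The plan is to analyze the structure of the diagonal product $\Delta$ directly from the marking data. First I would set up notation: each element of $\Delta$ is the image of a word $w$ in $\mathbf{F}_\ell$, and is faithfully recorded by the tuple $(\boldsymbol{\pi}_s(w))_{s\ge 1}$ of its images in the finite groups $G_s=\Gamma_s\wr \mathbb{Z}/m_s\mathbb{Z}$. The common quotient of all $G_s$ that ``survives'' in the diagonal product is governed by the relations shared by all markings; since each $G_s$ surjects onto $(A\times B)\wr \mathbb{Z}/m_s\mathbb{Z}$ (via $\Gamma_s\to A(s)\times B(s)$ on lamps, using the hypothesis $\Gamma_s/[A(s),B(s)]^{\Gamma_s}\simeq A\times B$), and the positions of the lamps $\alpha_i(s)$ at $0$ and $\beta_i(s)$ at $k_s$ with $k_s\to\infty$, I would argue that the coherent limit of these quotients is exactly $(A\times B)\wr\mathbb{Z}$: the cursor coordinate $\mathbb{Z}/m_s\mathbb{Z}$ stabilizes to $\mathbb{Z}$ because $m_s\to\infty$, and the separation $k_s\to\infty$ ensures the $A$-lamp sites and $B$-lamp sites never collide in the limit, so no extra relation between the $A$ and $B$ parts appears. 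This gives a surjection $\Delta\to (A\times B)\wr\mathbb{Z}$.

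Next I would identify the kernel of $\Delta\to (A\times B)\wr\mathbb{Z}$. An element of $\Delta$ maps trivially iff, in every $G_s$, its lamp configuration lies in $\ker(\Gamma_s\to A(s)\times B(s))$ pointwise and its cursor position is $0$; equivalently the element lies in $\oplus_{s} (\ker(\Gamma_s\to A(s)\times B(s)))^{m_s}$ embedded via the $s$-th coordinate. I should be careful here: the diagonal embedding means an element of $\Delta$ is a \emph{compatible} tuple, and I must check that choosing an arbitrary element of $\ker(\Gamma_s\to A(s)\times B(s))^{m_s}$ in coordinate $s$ and trivial in all other coordinates indeed defines an element of $\Delta$ (i.e. is realized by a word in $\mathbf{F}_\ell$ that is trivial in $G_t$ for $t\ne s$). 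This uses that the relators defining the passage $G_s\to (A\times B)\wr\mathbb{Z}/m_s\mathbb{Z}$ (commutators $[a_i,b_j]$ conjugated around, relative to spacing $k_s$) can be used to produce, in $G_s$ only, any lamp in $\ker(\Gamma_s\to A(s)\times B(s))$ while staying trivial in $G_t$ for $t\ne s$ because the spacings $k_t$ differ — this is a standard feature of the Brieussel--Zheng construction that I would cite from \cite{BZ}, \cite[Fact 2.10]{BZ}. This identifies the kernel as $\oplus_{s\ge 1}\ker(\Gamma_s\to A(s)\times B(s))^{m_s}$ (the paper writes $s\ge 0$; I would follow its indexing).

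Then I would verify this kernel is precisely the FC-center. For the inclusion $\subseteq$: an element supported in finitely many coordinates $s$ has, in each such $G_s$, a lamp configuration on the finite cyclic group $\mathbb{Z}/m_s\mathbb{Z}$; conjugating by the generators only moves finitely many lamps around and the cursor lies in a finite group, so the conjugacy class in $\Delta$ is finite — because conjugation is entirely controlled by the finitely many coordinates involved. Moreover each such element is torsion since it lies in a finite group $\prod_{s\in S}G_s$. For $\supseteq$: if an element $g$ has nontrivial image in $(A\times B)\wr\mathbb{Z}$, then since $(A\times B)\wr\mathbb{Z}$ has trivial FC-center (the lamplighter over $\mathbb{Z}$ is ICC away from identity — translating the cursor gives infinitely many conjugates, or for position-$0$ elements with a nontrivial lamp, conjugating by $\tau^n$ moves the lamp to position $n$), the conjugacy class of $g$ is infinite, so $g\notin Z^{\mathrm{FC}}(\Delta)$. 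Combining, $Z^{\mathrm{FC}}(\Delta)=\oplus_s\ker(\Gamma_s\to A(s)\times B(s))^{m_s}$ and $\Delta/Z^{\mathrm{FC}}(\Delta)\simeq (A\times B)\wr\mathbb{Z}$.

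The main obstacle I expect is the explicit control of the kernel in the second step: proving that an arbitrary element of $\ker(\Gamma_s\to A(s)\times B(s))^{m_s}$ in a single coordinate $s$ is actually attained by a word in $\mathbf{F}_\ell$ that dies in all other $G_t$. This is exactly where the hypotheses $m_s\ge 2k_s$, the strict monotonicity of $(k_s)$ and $(m_s)$, and the ``relative abelianization is all of $A\times B$'' assumption get used, and it is the technical heart of the Brieussel--Zheng diagonal-product machinery; I would handle it by quoting the relevant structural lemma from \cite{BZ} rather than reproving it. The other steps — the surjection onto $(A\times B)\wr\mathbb{Z}$ and the ICC property of that lamplighter — are routine.
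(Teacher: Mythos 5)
Your proposal is correct and follows essentially the same route as the paper: identify the kernel of $\Delta\to(A\times B)\wr\mathbb{Z}$ with the direct sum (a structural feature of the diagonal product taken from \cite{BZ}), note that every non-trivial conjugacy class in the lamplighter $(A\times B)\wr\mathbb{Z}$ is infinite, and verify by an explicit conjugation computation that the direct sum has finite conjugacy classes. The paper's proof is simply a terser version of the same argument.
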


\begin{proof}
In $A\times B \wr \mathbb{Z}$, any non-trivial conjugacy class is infinite. It is therefore sufficient to show that the direct sum is included in the FC-center. Any element $f$ there is a direct sum of functions $f_s$ from $\mathbb{Z}/m_s\mathbb{Z}$ to $\ker\left(\Gamma_s\rightarrow A(s)\times B(s)\right)$ and all these function are trivial for $s >s_0$ depending on $f$. Conjugating by the translation $\tau$ amounts to shift the functions to $f_s(\cdot+1)$. Conjugating by an element $a$ of $A$ (resp. $b$ of $B$) amounts to conjugate the values of the functions at $0$ to $af_s(0)a^{-1}$ (resp. at $k_s$ to $bf_s(k_s)b^{-1}$). As $\Gamma_s$ and $m_s$ are finite, the conjugacy class of $f$ is included in the finite $\oplus_{s\leq s_0}\ker\left(\Gamma_s\rightarrow A(s)\times B(s)\right)^{m_s}$.
\end{proof}

By \cite[Theorem 4.7]{Gournay} or Theorem \ref{main}, $\Delta$ has Shalom's property $H_{\mathrm{T}}$. Recall that a harmonic $1$-cocycle projects to 
Lipschitz harmonic functions on the group. For a group without property $H_{\mathrm{FD}}$, there is an infinite dimensional space of 
non-constant Lipschitz harmonic functions 
coming from $1$-dim projections of harmonic cocycle with weakly mixing representation. 
In general there are Lipschitz harmonic functions that are not related to equivariant harmonic cocycles.
On the diagonal product $\Delta$ constructed above, we have that all its harmonic functions of sub-exponential growth 
factor through the quotient $(A\times B)\wr \mathbb{Z}$. 
More precisely, let $\mu$ be a non-degenerate symmetric probability measure on $\Delta$,
then by Proposition \ref{sub-ex function},
all $\mu$-harmonic functions of sub-exponential growth on $\Delta$ are constant on $Z^{\mathrm{FC}}(\Delta)$. 
In other words, all $\mu$-harmonic functions of sub-exponential growth on $\Delta$ factor through the projection $\Delta\to (A\times B)\wr \mathbb{Z}$.
Since $A\times B$ is finite, the lamplighter group $(A\times B)\wr \mathbb{Z}$ does not have non-constant sublinear $\bar{\mu}$-harmonic functions \cite{BDCKY}, it follows that $\Delta$ doesn't have any non-constant sublinear $\mu$-harmonic functions.

 \begin{proof}[Proof of Corollary \ref{cor-function}]
 
Since speed of random walk on $\Delta$ is bounded from below by speed on each factor $G_s$, given a sub-addtive function $f$ such that $\frac{f(x)}{x}\to 0$ as $x\to \infty$, we can choose $\Gamma_s$ and $k_s$ similar to \cite{BZ} and take $m_s$ to grow sufficiently faster than $k_s$, $m_s\gg k_s$, to guarantee that the speed of random walk on $\Delta$ is faster than $f$. 

\end{proof}

\begin{remark}
By adapting the methods of \cite{BZ}, the speed, return probability and $\ell^p$-isoperimetry of the group $\Delta$ we discuss here can be estimated quite precisely. We don't pursue this direction here. By choosing $m_s$ sufficiently larger than $k_s$, one can guarantee that the random walk behavior (such as speed, entropy, return probability) on the two constructions with factor groups $\Gamma_s\wr \mathbb{Z}$ or $\Gamma_s\wr (\mathbb{Z}/m_s\mathbb{Z})$ with the same $(k_s)$ are comparable. In some sense the random walk parameters don't distinguish these two. However, the diagonal product with factor groups $\Gamma_s\wr \mathbb{Z}$ can admit non-constant sublinear harmonic functions. This aspect will be addressed elsewhere.
\end{remark}

This construction also permits to disprove one direction of a conjecture of Shalom that among solvable groups, property  $H_{\mathrm{FD}}$ would be equivalent to infinite Hirsch length.

\begin{proposition}\label{3solv}
There exists a $3$-solvable group with Shalom's property $H_{\mathrm{T}}$ and infinite Hirsch length.
\end{proposition}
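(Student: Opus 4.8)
The plan is to run the diagonal-product construction of the preceding subsection, but to feed it an \emph{infinite} metabelian group in place of the finite groups $\Gamma_s$, so that the FC-center of the resulting group is forced to have infinite Hirsch length while the quotient by it stays a lamplighter over a finite group. Concretely, take $A=B=\mathbb{Z}/2\mathbb{Z}$ and let $\Gamma_s=D_\infty=\langle a,b\mid a^2=b^2=1\rangle$ be the infinite dihedral group for every $s\ge 1$, marked with $A(s)=\langle a\rangle$ and $B(s)=\langle b\rangle$. Then $[A(s),B(s)]^{\Gamma_s}=\langle(ab)^2\rangle$ is already normal and infinite cyclic, and $D_\infty/\langle(ab)^2\rangle\simeq(\mathbb{Z}/2\mathbb{Z})^2\simeq A\times B$, so the standing assumption of the construction holds (only $G_s$ is now infinite). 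Fix strictly increasing sequences $(k_s)$ and $(m_s)$ of positive integers with $m_s\ge 2k_s$, put $G_s=D_\infty\wr(\mathbb{Z}/m_s\mathbb{Z})$ with the generating tuple $\mathcal{T}_s$ prescribed there, and let $\Delta$ be the corresponding diagonal product; it is finitely generated and embeds into $\prod_s G_s$.

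The first step is to show that $\Delta$ has derived length exactly $3$. Since $G_s=\bigl(\bigoplus_{\mathbb{Z}/m_s\mathbb{Z}}D_\infty\bigr)\rtimes(\mathbb{Z}/m_s\mathbb{Z})$ has $G_s'\subseteq\bigoplus_{\mathbb{Z}/m_s\mathbb{Z}}D_\infty$, and $D_\infty$ is metabelian so that $\bigl(\bigoplus D_\infty\bigr)''=\bigoplus D_\infty''=1$, we get $G_s^{(3)}=(G_s')''=1$ and hence $\Delta^{(3)}=1$. For the reverse inequality $\Delta$ surjects onto $G_1$, which is not metabelian: with $t=ab$ and $\tau=(\boldsymbol{e},+1)$, the elements $[t\delta_0,\tau]=(t\delta_0\cdot t^{-1}\delta_1,0)$ and $[a\delta_0,\tau]=(a\delta_0\cdot a\delta_1,0)$ lie in $G_1'$, and a position-wise computation in $D_\infty$ gives $\bigl[\,[t\delta_0,\tau]\,,\,[a\delta_0,\tau]\,\bigr]=(t^2\delta_0\cdot t^{-2}\delta_1,0)\neq(\boldsymbol{e},0)$, so $G_1^{(2)}\ne 1$ and therefore $\Delta^{(2)}\ne 1$.

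The second step is to compute the FC-center and deduce the Hirsch length. The argument of Fact \ref{factFC} carries over with $D_\infty$ in place of a finite $\Gamma_s$: the only point is that $\ker(\Gamma_s\to A(s)\times B(s))=\langle(ab)^2\rangle$ is abelian and normal of finite index in $\Gamma_s$, so the conjugation action of $\Delta$ on any finitely supported configuration factors through the finite quotient $\prod_{s\le s_0}(A\times B)\wr(\mathbb{Z}/m_s\mathbb{Z})$ and hence has finite orbit, whereas every non-trivial conjugacy class of $(A\times B)\wr\mathbb{Z}$ is infinite. This gives
\[
Z^{\mathrm{FC}}(\Delta)=\bigoplus_{s\ge1}\langle(ab)^2\rangle^{m_s}\qquad\text{and}\qquad \Delta/Z^{\mathrm{FC}}(\Delta)\simeq (A\times B)\wr\mathbb{Z}.
\]
The first group is an infinite direct sum of copies of $\mathbb{Z}$, hence of infinite Hirsch length, and since Hirsch length is monotone under subgroups of solvable groups we conclude $h(\Delta)=\infty$.

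The last step is property $H_{\mathrm{T}}$. By \cite[Theorem 4.7]{Gournay}, any $\mu$-harmonic $1$-cocycle $b$ of $\Delta$ with weakly mixing linear part vanishes on $Z^{\mathrm{FC}}(\Delta)$; arguing exactly as in the last paragraph of the proof of Theorem \ref{main} (using only that $Z^{\mathrm{FC}}(\Delta)$ is normal), the linear part of $b$ restricted to $\overline{\mathrm{Im}(b)}$ then factors through $\Delta/Z^{\mathrm{FC}}(\Delta)\simeq(A\times B)\wr\mathbb{Z}$, which has property $H_{\mathrm{T}}$ by \cite{Shalom} since $A\times B$ is finite, forcing $b=0$ by weak mixing. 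Thus $\Delta$ is $3$-solvable, has infinite Hirsch length, and has property $H_{\mathrm{T}}$. The main obstacle is the FC-center bookkeeping of the second step: one must check that passing from finite $\Gamma_s$ to $D_\infty$ does not enlarge $Z^{\mathrm{FC}}(\Delta)$ beyond $\bigoplus_s\langle(ab)^2\rangle^{m_s}$ — it does not, precisely because this kernel is abelian — and that the quotient identity $\Delta/Z^{\mathrm{FC}}(\Delta)\simeq(A\times B)\wr\mathbb{Z}$ holds, which is the same diagonal-product limit computation used in \cite{BZ} and in Fact \ref{factFC}.
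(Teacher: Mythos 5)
Your proposal is correct and follows essentially the same route as the paper: the same diagonal product with $\Gamma_s=D_\infty$, the same identification of $Z^{\mathrm{FC}}(\Delta)$ with $\bigoplus_s\langle(ab)^2\rangle^{m_s}$ via the argument of Fact \ref{factFC} (using that nontrivial elements of $\ker(D_\infty\to A\times B)$ have conjugacy class of size two), and the same appeal to Gournay's theorem plus $H_{\mathrm{T}}$ for $(A\times B)\wr\mathbb{Z}$. You supply some details the paper leaves implicit (verification of the relative abelianization hypothesis and that the derived length is exactly $3$), but the approach is the same.
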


\begin{proof}
Let $(k_s)_{s\geq 1}$ and $(m_s)_{s\geq 1}$ be as above. Take $A=B=\mathbb{Z}/2\mathbb{Z}$ and $\Gamma_s=D_{\infty}$ to be an infinite dihedral group for all $s$. The associated diagonal product $\Delta$ is $3$-solvable and contains $\oplus_{s\leq s_0}\ker\left(\Gamma_s\rightarrow A(s)\times B(s)\right)^{m_s}=\oplus_s\oplus_{\mathbb{Z}/m_s\mathbb{Z}} \mathbb{Z}$ so has infinite Hirsch length. Moreover, this subgroup coincides with the FC-center.

Indeed, $\ker\left(D_\infty\rightarrow A\times B\right)$ is a cyclic group generated by the commutator $abab$ of two involutions. Non-trivial elements there have a conjugacy class of size two in $D_\infty$. The proof of Fact \ref{factFC} applies. Again, the group $\Delta$ is an extension of its FC-center by $A\times B \wr \mathbb{Z}$, hence has Shalom's property $H_{\mathrm{T}}$ by \cite{Gournay}.
\end{proof}

\section{Examples of groups without property $H_{\mathrm{FD}}$}

In this section we construct explicit harmonic cocycles on some families of groups. Many of them are obtained as virtual coboundaries. 
Recall that given a unitary representation $\pi: G\to \mathcal{H}$, a cocycle $b:G\to \mathcal{H}$ is called a virtual coboundary if 
$b(g)=\pi(g)x-x$ for some $x\in W\setminus \mathcal{H}$, where $W$ is a vector space where the unitary representation $\pi$ extends to a linear action on $W$.
Finding virtual coboundaries is a useful tool to exhibit cocycles with certain properties, see for example \cite{FV1}, \cite{Gournay}.  

\subsection{Wreath products $(\mathbb{Z}/2\mathbb{Z})\wr \mathbb{Z}^d$ with $d\ge 3$ do not have~$H_{\mathrm{FD}}$}\label{wrZd}

We show a more general result. 
\begin{proposition}\label{wreath3}
Suppose $H$ is a finitely generated group on which simple random walk is transient. The wreath product $(\mathbb{Z}/2\mathbb{Z})\wr H$ does not have property $H_{\mathrm{FD}}$.
\end{proposition}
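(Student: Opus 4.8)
The strategy is to exhibit, directly, a weakly mixing unitary representation $\sigma$ of $G=(\mathbb{Z}/2\mathbb{Z})\wr H$ carrying a $1$-cocycle whose class in reduced cohomology is non-zero; by the criterion recalled in Section~\ref{sec:setting}, this shows that $G$ does not have property $H_{\mathrm{FD}}$. Write $B=\bigoplus_H\mathbb{Z}/2\mathbb{Z}$ for the base group, so that $G=B\rtimes H$, and let $\sigma$ act on $\mathcal H=\ell^2(H,\mathbb{C})$ as follows: $H$ acts through its left regular representation $\lambda$, while the lamp $\delta_x\in B$ acts by the reflection $\sigma(\delta_x)\colon\eta\mapsto\eta-2\eta(x)\delta_x$ flipping the sign of the $x$-coordinate. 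The relation $\lambda_h\sigma(\delta_x)\lambda_h^{-1}=\sigma(\delta_{hx})$ is immediate, so $\sigma$ is a well-defined unitary representation of $G$ (it is the first chaos of the Koopman representation of the affine action of $G$ on the Bernoulli space $\{0,1\}^H$). I would first note that $\sigma$ is weakly mixing: by Fell's absorption principle $\lambda_H\otimes\overline{\lambda_H}\cong\lambda_H^{(\infty)}$, which has no non-zero invariant vector since $H$ is infinite, so $\sigma|_H=\lambda_H$ has no non-zero finite-dimensional invariant subspace, and therefore neither does $\sigma$, such a subspace being $H$-invariant.

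Next I would produce the cocycle as a virtual coboundary. The representation $\sigma$ extends by the same formulas to a linear action of $G$ on $W=\mathbb{C}^H$, the constant function $\mathbf 1\in W$ is fixed by $\lambda_H$, and I set $b(g):=\sigma(g)\mathbf 1-\mathbf 1$. A one-line computation gives $b(h)=0$ for $h\in H$ and $b\bigl((f,h)\bigr)=-2\,\mathbf 1_{\operatorname{supp}(f)}$, a finitely supported function; hence $b\in Z^1(G,\sigma)$ and $b(\delta_e)=-2\delta_e\neq0$. Since $\mathbf 1\notin\ell^2(H)$ and the only $\sigma$-invariant vector of $W$ is $0$, $b$ is not a coboundary; the real point is that it is not a \emph{reduced} coboundary.

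To prove this, fix the finite symmetric generating set $S_G=S_H\cup\{\delta_e\}$ of $G$, where $S_H$ generates $H$. If $b\in\overline{B^1(G,\sigma)}$ then there are $\eta_n\in\ell^2(H)$ with $\sigma(s)\eta_n-\eta_n\to b(s)$ in $\ell^2(H)$ for every $s\in S_G$. Taking $s=s_i\in S_H$ gives $\|\lambda_{s_i}\eta_n-\eta_n\|_2\to0$, so the Dirichlet energy $\mathcal E(\eta_n)=\tfrac12\sum_{s\in S_H}\|\lambda_s\eta_n-\eta_n\|_2^2$ of the Cayley graph $(H,S_H)$ tends to $0$; taking $s=\delta_e$ gives $-2\eta_n(e)\delta_e\to-2\delta_e$, i.e. $\eta_n(e)\to1$. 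Truncating the real part of $\eta_n$ to $[0,1]$ does not increase $\mathcal E$, keeps the function in $\ell^2(H)$, and preserves both limits; and since this function lies in $\ell^2(H)$, its restriction $\phi_n$ to a suitably large ball $B(e,R_n)$ satisfies $\mathcal E(\phi_n)\to0$ (the extra boundary terms are bounded by the $\ell^2$-mass outside the ball) while $\phi_n(e)$ is unchanged, hence still tends to $1$. Normalizing by the value at $e$ yields $\psi_n\in C_c(H)$ with $\psi_n(e)=1$ and $\mathcal E(\psi_n)\to0$, i.e. the capacity of the vertex $e$ in $(H,S_H)$ vanishes; this is equivalent to recurrence of the simple random walk on $H$ and contradicts the hypothesis. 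Therefore $\overline{H}^1(G,\sigma)\neq0$ for the weakly mixing representation $\sigma$, and $G$ does not have property $H_{\mathrm{FD}}$.

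I expect the main obstacle to be precisely the passage in the last paragraph from ``$b$ is not a reduced coboundary'' to transience of $H$; the construction of $\sigma$, its weak mixing, and the virtual-coboundary computation are routine. The only delicate point there is that the approximating vectors $\eta_n$ live a priori in $\ell^2(H)$ rather than in $C_c(H)$, which forces the two truncation steps (to $[0,1]$-values, then to a finite ball) before one can invoke the classical identification of recurrence with vanishing capacity of a vertex.
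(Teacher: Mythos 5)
Your proof is correct, and it reaches the conclusion by a genuinely different route from the paper's, even though the unitary representation is essentially the same (the sign-flip representation on $\ell^2(H)$ twisted by the regular representation of $H$, whose weak mixing both arguments deduce from its restriction to $H$). The difference lies in the choice of virtual vector and in how transience enters. The paper takes as virtual vector a voltage function $v$ of a unit flow of finite energy from $e_H$ to infinity --- this is where transience is used --- and then adjusts $v$ by a constant so that the cocycle $b(g)=v-\pi(g)v$ is \emph{harmonic} for a suitable measure on the wreath product; by the Guichardet orthogonality recalled in Section~\ref{sec:setting}, a non-zero harmonic cocycle automatically represents a non-trivial reduced class. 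You instead take the constant function $\mathbf 1$, which yields the much simpler cocycle $-2\,\mathbf 1_{\operatorname{supp} f}$ supported on the lamp coordinates, and you pay for this simplicity in the last step: non-vanishing in reduced cohomology is no longer automatic and must be extracted from the dual characterization of transience, namely positivity of the capacity of a point, $\inf\{\mathcal E(\psi):\psi\in C_c(H),\ \psi(e)=1\}>0$. The two characterizations of transience (existence of a finite-energy unit flow versus positive capacity) are standard and equivalent, so the proofs are morally dual: yours is more hands-on with the definition of $\overline{B^1(G,\sigma)}$, the paper's is shorter once the harmonic-cocycle formalism is in place. Your truncation steps are sound: composing with the $1$-Lipschitz retraction of $\mathbb{R}$ onto $[0,1]$ does not increase the Dirichlet energy and does not increase $\ell^2$-norms, and cutting off to a ball $B(e,R_n)$ adds boundary terms controlled --- up to a factor $2$ and the degree $|S_H|$, which you elide but which is harmless --- by $\mathcal E(\phi_n)$ plus the $\ell^2$-mass outside the ball, both of which vanish along a suitable diagonal sequence.
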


\begin{proof}
Denote elements of $G=(\mathbb{Z}/2\mathbb{Z})\wr H$ by $g=(f,h)$, where $f:H\to \mathbb{Z}/2\mathbb{Z}$ is the lamp configuration and $h\in H$. Let $W$ be the vector space of real valued functions on $H$. Take the linear representation $\pi$ of $G$ on $W$ defined by
$$\pi((f,h))\psi(x)=(-1)^{f(x)}\psi(xh).$$
The restriction of $\pi$ to the Hilbert space $\ell^2(H)$ is a unitary representation. It is clear that $\pi$ doesn't have any finite dimensional sub-representation because the subgroup $H$ acts by the right regular representation on $\ell^2(H)$. 

Let $\mu$ be a symmetric probability measure on $H$ with finite generating support. By assumption that the $\mu$-random walk is transient we have that there is a unit flow of finite energy from identity $e_H$ to infinity, see \cite[Theorem 2.11]{book}. Take the corresponding voltage function $v: H\to \mathbb{R}$, then it satisfies that $v(e_H)=1$, 
$$(I-P_{\mu}) v=\mathbf{1}_{e_H},$$
where $P_{\mu}v(x)=\sum_{y\in H}v(xy)\mu(y)$.
In particular $v$ is $\mu$-harmonic on $H$ except at identity $e_H$. 

Add a constant to $v$ so that $v(e_H)=a$, where $a\in\mathbb{R}$ is a number to be chosen later. 
Take a virtual coboundary $b:G\to \ell^2(H)$ defined as 
$$b(g)=v-\pi(g)v.$$
We check that $b(g)$ is indeed in $\ell^2(H)$: for a generator $s\in \mbox{supp}\mu$, 
$$\left\Vert b((0,s)) \right\Vert_{\ell^2(H)}^2=\sum_{x\in H} (v(x)-v(xs))^2 \le \frac{1}{\mu(s)}\sum_{x,y\in H} (v(x)-v(xy))^2\mu(y)<\infty.$$
In the last inequality we have used that the flow of $v$ has finite energy by the transience assumption. 
And for $g=(f,e_H)$, since $f$ is of finite support, 
$$\left\Vert b((f,e_H)) \right\Vert_{\ell^2(H)}^2=\sum_{x\in \rm{supp}f}4v(x)^2<\infty.$$
Since $b(g)\in \ell^2(H)$ for generators of $G$, it follows that $b(g)$ is in $\ell^2(H)$ for any $g\in G$.

Take $\eta=\frac{1}{2}\mu+\frac{1}{2}\mathbf{1}_{\delta_e^1}$, where $\delta_e^1$ denotes the element $(f,0)$ where $f=1$ at $e_H$ and $0$ otherwise. It is clear that $\eta$ is a symmetric probability measure on $G$ with generating support. We now choose the constant $a$ to make $b$ an $\eta$-harmonic cocycle. To this end it suffices to have
$$\sum_{g\in G}b(g)\eta(g)=0.$$
Note that 
$$b(\delta_e^1)=2a\mathbf{1}_{e_H},\ \sum_{h\in H}b((0,h))\mu(h)=(I-P_{\mu})v=\mathbf{1}_{e_H}.$$
Thus by choosing $a=-1/2$, we have that $b:G\to \ell^2(H)$ is a $\eta$-harmonic cocycle with weakly mixing representation $\pi$. It follows that $G$ doesn't have property $H_{\rm{FD}}$.

\end{proof}

Note that by Varopoulos \cite{var}, the only finitely generated groups that carry recurrent simple random walks are the finite extensions of $\{e\}, \mathbb{Z}$ or $\mathbb{Z}^2$. In the Appendix A we show that the wreath product $H\wr G$ of two finitely generated infinite groups where $H$ is amenable doesn't have property $H_{\mathrm{FD}}$. Combined with the previous proposition and the fact that $F\wr \mathbb{Z}$ has property $H_{\mathrm{FD}}$ for $F$ finite by \cite{Shalom}, we have that for wreath product $H\wr G$ of finitely generated groups, where $H$ is amenable and $G$ is infinite, the only case where it is not known whether $H\wr G$ has property $H_{\mathrm{FD}}$ is when $H$ is finite and $G$ is a finite extension of $\mathbb{Z}^2$.

\subsection{Locally-finite-by-$\mathbb{Z}$ groups without property $H_{\mathrm{FD}}$}\label{sec:noHFD}

In this subsection we give examples of locally-finite-by-$\mathbb{Z}$ groups which do not have property $H_{\mathrm{FD}}$. This answers negatively a question of Shalom \cite[Section 6.6]{Shalom}.

\begin{proposition}\label{noH_FD}
Let $G$ be a finitely generated locally-finite-by-$\mathbb{Z}$ group and $\mathcal{S}$ be a Schreier graph of $G$. Assume $\pi:G\rightarrow \mathcal{U}\left(\ell^2(\mathcal{S})\right)$ is a unitary representation fixing no non-zero vector. If there exists an associated non-zero harmonic cocycle $b:G \rightarrow \ell^2(\mathcal{S})$, then $G$ does not have Shalom's property $H_{\mathrm{FD}}$.
\end{proposition}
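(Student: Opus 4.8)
The plan is to reduce to the characterisation recalled in Section~\ref{sec:setting}: a finitely generated group fails to have property $H_{\mathrm{FD}}$ as soon as it carries a non-zero $\mu$-harmonic $1$-cocycle whose linear part is weakly mixing, i.e.\ has no non-zero finite dimensional $G$-invariant subspace. The cocycle $b$ is non-zero and $\mu$-harmonic by hypothesis and its linear part is $\pi$, so the entire argument consists in showing that $\pi$ is weakly mixing; the conclusion is then immediate.

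First I would use that the vertex set of the Schreier graph $\mathcal{S}$ is a transitive $G$-set, say $\mathcal{S}\cong G/H$, and that $\pi$ is the associated permutation (Koopman) representation, $\pi_g\delta_s=\delta_{gs}$. If $\mathcal{S}$ were finite, the constant function $\mathbf{1}_{\mathcal{S}}\in\ell^2(\mathcal{S})$ would be a non-zero $\pi$-invariant vector, contradicting the assumption that $\pi$ fixes no non-zero vector. Hence $\mathcal{S}$ is infinite.

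Next I would establish the general fact that the permutation representation of a transitive action on an \emph{infinite} set has no non-zero finite dimensional invariant subspace. Suppose $V\subseteq\ell^2(\mathcal{S})$ is $G$-invariant with $1\le\dim V<\infty$, and let $P$ be the orthogonal projection onto $V$: since $\pi$ is unitary, $P$ is a $G$-equivariant, positive, finite-rank operator. Transitivity of the action together with $\pi_gP=P\pi_g$ forces the diagonal matrix entries $\langle P\delta_s,\delta_s\rangle$ to be independent of $s\in\mathcal{S}$, say equal to $c\ge0$; summing over the orthonormal basis $(\delta_s)_{s\in\mathcal{S}}$ gives $\dim V=\operatorname{tr}(P)=c\,|\mathcal{S}|$. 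Since $\mathcal{S}$ is infinite, this forces $c=0$, so $\|P^{1/2}\delta_s\|^2=\langle P\delta_s,\delta_s\rangle=0$ for every $s$, and as $(\delta_s)$ is total we get $P=0$, i.e.\ $V=0$, a contradiction. Therefore $\pi$ is weakly mixing, and $b$ being a non-zero $\mu$-harmonic cocycle with linear part $\pi$, the group $G$ does not have property $H_{\mathrm{FD}}$.

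A remark on what is and is not delicate here. The hypothesis that $G$ is locally-finite-by-$\mathbb{Z}$ plays no role in this proof; it is merely the ambient setting of Subsection~\ref{sec:noHFD}, where the criterion will be verified for concrete groups such as $\mathrm{Sym}(\mathbb{Z})\rtimes\mathbb{Z}$. The only points requiring care are the identification of $\pi$ with the Koopman representation of the Schreier graph and the verification that $\mu$ satisfies the standing assumptions (symmetric, finite generating support), so that the equivalence from Section~\ref{sec:setting} applies verbatim; beyond the short trace computation above I do not foresee a genuine obstacle.
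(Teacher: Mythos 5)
Your proof is correct, but it takes a genuinely different route from the paper's. The paper's proof is a two-line reduction: a non-zero harmonic cocycle gives $\overline{H}^1(G,\pi)\neq 0$, and Fact~\ref{423} (from \cite[Propositions 4.2.3 and 4.2.4]{Shalom}) upgrades $H_{\mathrm{FD}}$ to $H_{\mathrm{T}}$ for finitely generated locally-finite-by-$\mathbb{Z}$ groups, so $\pi$ would have to admit a non-zero invariant vector --- contradicting the hypothesis. That argument uses nothing about $\pi$ beyond the absence of fixed vectors, which is why the statement is phrased for an arbitrary unitary representation on $\ell^2(\mathcal{S})$; the price is that it leans entirely on the locally-finite-by-$\mathbb{Z}$ hypothesis. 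You instead prove weak mixing of $\pi$ directly, via the (correct) trace computation showing that the permutation representation of an infinite transitive $G$-set admits no non-zero finite dimensional invariant subspace, and then invoke the harmonic-cocycle characterisation of $H_{\mathrm{FD}}$ from Section~\ref{sec:setting}. This is more elementary and, as you observe, dispenses with the locally-finite-by-$\mathbb{Z}$ assumption entirely; but it buys this only by silently strengthening the hypothesis on $\pi$ to being the Koopman representation of the Schreier graph action (and the action being transitive on an infinite set). The proposition as literally stated allows any unitary representation on the Hilbert space $\ell^2(\mathcal{S})$ with no fixed vector, for which your trace argument says nothing --- indeed weak mixing can genuinely fail there, which is exactly the gap Fact~\ref{423} is designed to bridge. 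Since in both applications (Propositions~\ref{SymZ} and~\ref{DA}) the representation is the permutation representation of a transitive infinite Schreier graph, your version suffices for everything the paper uses the proposition for, but you should either state the extra hypothesis explicitly or note that the general case requires the $H_{\mathrm{FD}}\Rightarrow H_{\mathrm{T}}$ upgrade.
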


This proposition is immediate once we recall.

\begin{fact}[\cite{Shalom}]\label{423}
A finitely generated locally finite by $\mathbb{Z}$ group $G$ with property $H_{\mathrm{FD}}$ has property $H_{\mathrm{T}}$.
\end{fact}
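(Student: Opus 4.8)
The plan is to establish $H_{\mathrm{T}}$ directly in its harmonic-cocycle form: by the discussion in Section \ref{sec:setting}, $G$ has $H_{\mathrm{T}}$ if and only if every $\mu$-harmonic $1$-cocycle whose linear part has no nonzero $G$-invariant vector is zero, where $\mu$ is any fixed symmetric probability measure with finite generating support. So let $b:G\to\mathcal H$ be a $\mu$-harmonic $1$-cocycle with unitary linear part $\pi$ satisfying $\mathcal H^{\pi(G)}=0$. Decompose $\mathcal H=\mathcal H_{\mathrm{fd}}\oplus\mathcal H_{\mathrm{wm}}$, where $\mathcal H_{\mathrm{fd}}$ is the closed linear span of all finite-dimensional $G$-invariant subspaces and $\mathcal H_{\mathrm{wm}}$ its orthogonal complement, which is weakly mixing; both summands are $G$-invariant and neither contains a nonzero invariant vector. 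Since a cocycle into an orthogonal direct sum is harmonic exactly when each coordinate is, we may write $b=b_{\mathrm{fd}}\oplus b_{\mathrm{wm}}$ with both pieces harmonic; the weakly mixing piece $b_{\mathrm{wm}}$ vanishes because $G$ has $H_{\mathrm{FD}}$. It remains to show $b_{\mathrm{fd}}=0$.

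A maximal orthogonal family of finite-dimensional $G$-subrepresentations spans $\mathcal H_{\mathrm{fd}}$ (otherwise its orthogonal complement inside $\mathcal H_{\mathrm{fd}}$, being a nonzero subrepresentation of a sum of finite-dimensional ones, would itself contain a finite-dimensional subrepresentation), so $\mathcal H_{\mathrm{fd}}=\bigoplus_i\tau_i$ is a Hilbert direct sum of finite-dimensional unitary representations $\tau_i$, each with $\tau_i^G=0$. Splitting $b_{\mathrm{fd}}$ coordinatewise again produces harmonic cocycles $b_i$ with linear part $\tau_i$, so the problem reduces to the following: \emph{if $\tau$ is a finite-dimensional unitary representation of $G$ with $\tau^G=0$, then $\overline H^1(G,\tau)=0$}. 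Indeed, since $G$ is finitely generated and $\tau$ finite-dimensional, $Z^1(G,\tau)$ is finite-dimensional, so $\overline H^1(G,\tau)=H^1(G,\tau)$, and the harmonic representative of the zero class is $0$; hence $b_i=0$ and $b=0$.

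The cohomological input is that $H^1(L,\rho)=0$ for every locally finite group $L$ and every finite-dimensional (real or complex) $L$-representation $\rho$. To see this, write $L=\bigcup_nL_n$ with $L_n$ finite; a cocycle $c:L\to\rho$ restricted to $L_n$ is the coboundary of $v_n:=-\tfrac{1}{|L_n|}\sum_{\ell\in L_n}c(\ell)$ (here characteristic zero is used), two potentials for $c|_{L_n}$ differ by a vector in $\rho^{L_n}$, the decreasing chain $\rho^{L_1}\supseteq\rho^{L_2}\supseteq\cdots$ of subspaces of the finite-dimensional space $\rho$ stabilizes, and consequently the $v_n$ become compatible and glue to a single $v\in\rho$ with $c(\ell)=v-\rho(\ell)v$ for all $\ell\in L$. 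Applying this to $\rho=\tau|_L$, we may subtract a coboundary from a given cocycle $b\in Z^1(G,\tau)$ so that $b|_L=0$. Then for $g\in G$ and $h\in L$, applying the cocycle identity to $g^{-1}hg\in L$ and using $b|_L=0$ and $b(g^{-1})=-\tau(g)^{-1}b(g)$ gives $0=b(g^{-1}hg)=\tau(g)^{-1}(\tau(h)b(g)-b(g))$, so $b(g)\in\tau^L=:W$ for all $g$. Thus $b$ is valued in the $G$-subrepresentation $W$, on which $L$ acts trivially, so $b$ descends to a $1$-cocycle of $\mathbb Z=G/L$ with finite-dimensional unitary linear part, and $W^{\mathbb Z}=W^G\subseteq\tau^G=0$. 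Fixing $g_0\in G$ mapping to a generator of $\mathbb Z$, the operator $I-\tau(g_0)$ on the finite-dimensional space $W$ has trivial kernel, hence is invertible, so $b(g_0)=(I-\tau(g_0))w$ for a unique $w\in W$, and by the cocycle identity $b$ is the coboundary of $w$. Therefore $\overline H^1(G,\tau)=H^1(G,\tau)=0$, which finishes the proof.

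The main obstacle is concentrated in the last paragraph: the vanishing $H^1(L,\rho)=0$ for a locally finite group (where characteristic zero and finite-dimensionality of $\rho$ are essential) together with its interaction with the normality of $L$, which forces a finite-dimensional cocycle to be supported on the $\mathbb Z$-quotient, where the relevant rotation operator $I-\tau(g_0)$ becomes invertible. Everything else — the finite / weakly mixing decomposition, the coordinatewise behaviour of harmonic cocycles, the invocation of $H_{\mathrm{FD}}$ on the weakly mixing part, and the reduction $\overline H^1=H^1$ in finite dimension — is routine. Note in particular that amenability of $G$ plays no role; only the locally-finite-by-$\mathbb Z$ algebraic structure is used.
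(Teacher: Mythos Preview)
Your proof is correct, but it takes a genuinely different route from the paper. The paper invokes two structural results from \cite{Shalom}: Proposition~4.2.3, which characterizes $H_{\mathrm{F}}$ via the nonexistence of finite-index subgroups mapping with dense image into $S^1\ltimes\mathbb{C}$, and Proposition~4.2.4, which upgrades $H_{\mathrm{F}}$ to $H_{\mathrm{T}}$ when the first Betti number equals the first virtual Betti number. The paper then observes that for a locally-finite-by-$\mathbb{Z}$ group any such image is virtually torsion-free, hence cyclic, and that the Betti numbers agree for the same structural reason.

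You instead carry out the finite-dimensional cohomology computation directly: you prove $H^1(L,\rho)=0$ for locally finite $L$ and finite-dimensional $\rho$ by a stabilizing-chain argument, use normality of $L$ to push a cocycle into $\tau^L$, and finish by inverting $I-\tau(g_0)$ on the $\mathbb{Z}$-quotient. This is more self-contained --- it does not require locating and unpacking Shalom's Propositions~4.2.3--4.2.4 --- and in fact reproves in situ the special cases of those propositions that are needed here. The paper's approach is shorter on the page precisely because it outsources this work; yours makes the mechanism (the interaction between the locally finite kernel and the $\mathbb{Z}$-quotient at the level of finite-dimensional coefficients) completely explicit. Both arguments ultimately hinge on the same algebraic fact, namely that a locally-finite-by-$\mathbb{Z}$ group has no ``interesting'' finite-dimensional cohomology.
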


\begin{proof}[Proof of Fact \ref{423}]
By \cite[Proposition 4.2.3]{Shalom}, the group $G$ has property $H_F$ if and only if no finite index subgroup $G_0$ admits a homomorphism to $S^1 \ltimes \mathbb{C}$ with dense image. However up to finite index the image must be torsion-free, hence cyclic so cannot have dense image. So $G$ has property $H_F$. Property $H_{\mathrm{T}}$ follows from \cite[Proposition 4.2.4]{Shalom} as the first Betti number and the first virtual Betti number of $G$ agree.
\end{proof}

\begin{proof}[Proof of Proposition \ref{noH_FD}]
Existence of a harmonic cocycle implies $\bar{H}^1(G,\pi) \neq 0$. If $G$ had property $H_{\mathrm{FD}}$, Fact \ref{423} would upgrade it to property $H_{\mathrm{T}}$, providing a non-zero fixed vector for $\pi$.
\end{proof}

\begin{proposition}\label{SymZ}
The group $\rm{Sym}(\mathbb{Z}) \rtimes \mathbb{Z}$ does not have Shalom's property $H_{\mathrm{FD}}$.
\end{proposition}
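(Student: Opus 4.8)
\textbf{Proof plan for Proposition \ref{SymZ}.}

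The plan is to apply Proposition \ref{noH_FD}: it suffices to exhibit a Schreier graph $\mathcal{S}$ of $G = \mathrm{Sym}(\mathbb{Z}) \rtimes \mathbb{Z}$, a unitary representation $\pi \colon G \to \mathcal{U}(\ell^2(\mathcal{S}))$ with no non-zero fixed vector, and a non-zero $\mu$-harmonic cocycle $b \colon G \to \ell^2(\mathcal{S})$ for a suitable symmetric generating measure $\mu$. Since $\mathrm{Sym}(\mathbb{Z})$ is locally finite, $G$ is locally-finite-by-$\mathbb{Z}$, so the hypothesis of Proposition \ref{noH_FD} is met as soon as such a cocycle is produced. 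The natural choice of Schreier graph is the action of $G$ on $\mathbb{Z}$ itself: $\mathrm{Sym}(\mathbb{Z})$ acts by finitely supported permutations and the generator $t$ of the $\mathbb{Z}$-factor acts by the shift $n \mapsto n+1$ (this is the action defining the semidirect product). The point stabilizer is a copy of $\mathrm{Sym}(\mathbb{Z} \setminus \{0\})$, which is a subgroup of infinite index only up to the cursor, so $\mathcal{S} \cong \mathbb{Z}$ as a vertex set and $\ell^2(\mathcal{S}) = \ell^2(\mathbb{Z})$. The representation $\pi$ is the quasi-regular (Koopman) representation on $\ell^2(\mathbb{Z})$: transpositions act by swapping two coordinates and $t$ acts by the bilateral shift. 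Since the shift alone generates the full regular representation of $\mathbb{Z}$ on $\ell^2(\mathbb{Z})$, which is weakly mixing, $\pi$ has no non-zero invariant vector; this is the easy part.

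The substance is constructing the harmonic cocycle, and here I would follow the virtual coboundary technique used in Proposition \ref{wreath3}. Let $W$ be the space of all real (or complex) functions on $\mathbb{Z}$, on which $\pi$ extends by the same formulas (permutation of coordinates, shift). I want a vector $v \in W \setminus \ell^2(\mathbb{Z})$ such that $b(g) := v - \pi(g)v$ lands in $\ell^2(\mathbb{Z})$ for all $g$ and is $\mu$-harmonic for an appropriate $\mu$. The transpositions contribute differences supported on two points, which are automatically in $\ell^2$; the condition that the shift contribute something in $\ell^2$ is that $v - S v \in \ell^2(\mathbb{Z})$, i.e. $v$ has "finite energy gradient" along $\mathbb{Z}$. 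The canonical choice is a function like $v(n) = \mathrm{sign}(n)\sqrt{|n|}$ or, more robustly, the voltage/potential function of a unit flow to infinity on $\mathbb{Z}$; since SRW on $\mathbb{Z}$ is recurrent one cannot take a genuine harmonic voltage, so I expect instead to take $v$ piecewise-linear-ish with $v - Sv \in \ell^2$ but $v \notin \ell^2$ — e.g. $v(n) \asymp \sqrt{|n|}$, whose successive differences are $\asymp |n|^{-1/2}$, square-summable. Then choose $\mu$ on $G$ as a convex combination of the uniform measure on a finite symmetric generating set of transpositions together with $t^{\pm 1}$ (analogous to $\eta = \tfrac12 \mu + \tfrac12 \mathbf{1}_{\delta_e^1}$ in Proposition \ref{wreath3}), and tune the shift-part weights and an additive constant on $v$ so that $\sum_g b(g)\mu(g) = 0$. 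Because a virtual coboundary $v - \pi(\cdot)v$ of a $W$-vector always satisfies the cocycle identity, and $b$ is non-zero precisely because $v \notin \ell^2$ is not of the form $v - (\text{something bounded})$, harmonicity plus non-triviality finish the argument via Proposition \ref{noH_FD}.

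The main obstacle I anticipate is the delicate bookkeeping needed to make $b$ simultaneously $\ell^2$-valued and $\mu$-harmonic while keeping $\pi$ fixed-vector-free: one must check that the "defect" $\sum_g b(g)\mu(g)$, which is a single vector in $\ell^2(\mathbb{Z})$ built from $(I - S)v$ on the shift part and from coordinate-swap differences on the transposition part, can be cancelled by a suitable symmetric choice of $\mu$ and constant shift of $v$. A secondary subtlety is that symmetry of $\mu$ forces using both $t$ and $t^{-1}$, so the shift contribution is $v - \tfrac12(Sv + S^{-1}v) = \tfrac12(I-S)(v) + \tfrac12(I - S^{-1})(v)$, a discrete Laplacian applied to $v$; I would pick $v$ so that this discrete Laplacian is a point mass (or finitely supported), which reduces the problem to a one-dimensional Green's-function computation on $\mathbb{Z}$ — the standard fact that $(I - \tfrac12(S + S^{-1}))$ applied to $v(n) = -|n|$ (or a linear ramp) yields a multiple of $\delta_0$. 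Everything else is routine: verifying the cocycle identity (automatic for virtual coboundaries), verifying $b \neq 0$ (because $v$ grows and hence cannot differ from an $\ell^2$ element by a bounded one), and invoking Proposition \ref{noH_FD}.
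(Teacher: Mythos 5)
Your frame --- the Schreier graph $\mathbb{Z}$, the quasi-regular representation on $\ell^2(\mathbb{Z})$ (fixed-vector-free because the action is transitive on an infinite set), and the reduction to Proposition \ref{noH_FD} --- is exactly the paper's. The gap is in the construction of the cocycle: a \emph{pure} virtual coboundary $b(g)=v-\pi(g)v$ cannot work on this Schreier graph. First, both of your concrete candidates already fail the $\ell^2$ requirement for the shift generator: for $v(n)\asymp\sqrt{|n|}$ the increments are $\asymp |n|^{-1/2}$ and $\sum_n |n|^{-1}$ diverges, so $v-Sv\notin\ell^2(\mathbb{Z})$; for $v(n)=-|n|$ the increments are $\pm1$, again not square-summable. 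Second, and more structurally, for a virtual coboundary the harmonicity condition $\sum_g\mu(g)b(g)=0$ says precisely that $v$ is harmonic for the induced walk on the Schreier graph $\mathbb{Z}$ (a symmetric, finite-range, irreducible walk). Such a $v$ is affine outside the finite window where the transpositions act, so the requirement $v-Sv\in\ell^2$ forces both asymptotic slopes to vanish; then $v$ is a bounded harmonic function on a recurrent chain, hence constant, hence $b\equiv0$. (Also, the additive constant you propose to tune is useless here: constants are $\pi$-invariant for a permutation representation, unlike in Proposition \ref{wreath3} where the sign character of the lamp makes that constant a genuine degree of freedom.)

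The paper's cocycle circumvents this by not being a pure virtual coboundary. It takes the linearly growing harmonic function $h(x)=x$ for $x\le 0$ and $h(x)=x-\tfrac{2}{3}$ for $x\ge 1$ (harmonic for $\mu$ giving mass $\tfrac14$ to $t^{\pm 1}$ and $\tfrac12$ to the transposition exchanging $0$ and $1$) and sets $b(g)(x)=h(x.g)-h(x)-T_g$, where $T_g\in\mathbb{Z}$ is the translation part of $g$. Subtracting the homomorphism $g\mapsto T_g$ (times the constant function, which is $\pi$-invariant in the ambient space $W$) preserves both the cocycle identity and harmonicity (since $\sum_g T_g\,\mu(g)=0$ by symmetry of $\mu$), while cancelling the linear growth so that each $b(g)$ is finitely supported, hence in $\ell^2(\mathbb{Z})$. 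This correction by the translation character is the missing idea; without it the approach you describe produces only the zero cocycle.
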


\begin{proof}
This groups acts by permutations on $\mathbb{Z}$. It is generated by the shift $x\mapsto x+1$ and the transposition permuting $0$ and $1$. The associated Schreier graph is easily pictured. let $\mu$ be the probability giving mass $\frac{1}{4}$ to the shift and its inverse  and mass $\frac{1}{2}$ to the transposition. It is immediate to check that the function
\[
h(x)=\left\{\begin{array}{ll} x & \forall x \leq 0, \\x-\frac{2}{3} & \forall x \geq 1,
\end{array}\right.
\]
is $\mu$-harmonic. It follows that $b(g)(x)=h(x.g)-h(x)-T_g$, where $T_g$ is the translation part of $g$, defines a non-zero $\mu$-harmonic cocycle with respect to the right regular representation on $\ell^2(\mathcal{S})$. The latter has no non-zero fixed vector as the action on this Schreier graph is transitive. Proposition \ref{noH_FD} applies.
\end{proof}

Let $F$ be a finite group acting faithfully transitively on an alphabet $\{0\dots q-1\}$ with $q$ letters. Given a $q+1$-regular tree $\mathbb{T}$, an end of this tree, a bi-infinite ray towards this end and a vertex on this ray, the discrete affine group $\rm{DA}_F(\mathbb{T})$ is the group of automorphisms of the tree generated by an automorphism that shifts the ray together with the $F$-permutations of the $q$ subtrees (not containing the end) obtained by removing the chosen vertex. This group was introduced by Ryokichi Tanaka and the authors in \cite{BTZ}, to which we refer for details. By \cite[Proposition 3.1(3)]{BTZ}, the discrete affine group is locally finite by $\mathbb{Z}$.

\begin{proposition}\label{DA}
The discrete affine group $\rm{DA}_F(\mathbb{T})$ of a $q+1$-regular tree does not have Shalom's property $H_{\mathrm{FD}}$.
\end{proposition}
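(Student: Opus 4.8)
The plan is to exhibit, exactly as in the proof of Proposition \ref{SymZ}, an explicit non-zero $\mu$-harmonic cocycle into $\ell^2(\mathcal{S})$ with respect to the right regular representation on a Schreier graph $\mathcal{S}$ of $\mathrm{DA}_F(\mathbb{T})$, and then invoke Proposition \ref{noH_FD}. The natural Schreier graph to use is the action of $\mathrm{DA}_F(\mathbb{T})$ on the set of vertices of the ray, or more precisely on the orbit of the chosen vertex $v_0$ under the group; by \cite[Proposition 3.1]{BTZ} this orbit, equipped with the generating set (the shift $t$ together with the $F$-permutations of the $q$ subtrees), is essentially a line-like Schreier graph with finite "decorations", on which $t$ acts by a shift by $1$ and the $F$-generators fix the distinguished ray pointwise but permute the vertices hanging off it. The key point is that the horocycle/level function $x \mapsto \ell(x) \in \mathbb{Z}$ (the Busemann-type coordinate towards the chosen end) is well-defined on this Schreier graph and the $F$-generators preserve levels while $t$ shifts levels by $1$.

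First I would fix a symmetric measure $\mu$ giving mass $\tfrac14$ to $t$ and $t^{-1}$ and distributing the remaining mass uniformly over the finite set of $F$-generators, and then look for a harmonic function $h$ on $\mathcal{S}$ that is \emph{affine in the level coordinate on each side}, of the form $h(x) = \ell(x)$ for $\ell(x)\le 0$ and $h(x) = \ell(x) - c$ for $\ell(x)\ge 1$, averaged appropriately over the finite fibers so that the $F$-generators contribute nothing to the Laplacian. Because the $F$-action is level-preserving, harmonicity of such an $h$ reduces to a one-dimensional recursion in the level variable with a single defect at the interface between level $0$ and level $1$, exactly as in Proposition \ref{SymZ}; solving it pins down the constant $c$ (a rational number depending on $q$ and on the branching structure of the tree, analogous to the $2/3$ appearing for $\mathrm{Sym}(\mathbb{Z})\rtimes\mathbb{Z}$). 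Then I would set $b(g)(x) = h(x.g) - h(x) - T_g$, where $T_g \in \mathbb{Z}$ is the translation (shift) part of $g$ read off from the homomorphism $\mathrm{DA}_F(\mathbb{T}) \to \mathbb{Z}$; the constant shift $T_g$ is precisely what is needed to make $b(g)$ lie in $\ell^2(\mathcal{S})$ rather than merely being a bounded-slope function, since $h(x.g)-h(x)$ differs from $T_g$ only on the finitely many vertices whose level or fiber-position is changed by $g$. One checks $b$ is a cocycle for the right regular representation, that it is $\mu$-harmonic by the choice of $c$, and that it is non-zero (e.g. $b(t)\neq 0$ because the interface defect is genuinely moved).

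The representation is the right regular representation of $\mathrm{DA}_F(\mathbb{T})$ on $\ell^2$ of a single orbit of the Schreier action; since that action is transitive (the orbit of $v_0$), $\ell^2(\mathcal{S})$ has no non-zero invariant vector — an invariant vector would be a constant $\ell^2$ function on an infinite set. Hence the hypotheses of Proposition \ref{noH_FD} are met and $\mathrm{DA}_F(\mathbb{T})$ fails $H_{\mathrm{FD}}$. The main obstacle I anticipate is purely bookkeeping: identifying the correct Schreier graph from \cite{BTZ} and verifying the $\ell^2$-summability of $b(g)$ on the generators, i.e. controlling how many vertices of $\mathcal{S}$ are displaced by $t$ and by each $F$-generator and checking the squared displacements of $h$ sum to a finite value — this is where the precise geometry of the discrete affine group (the fact that the "perturbed" region for each generator is finite, or at least has square-summable $h$-differences) must be used, together with the transience of the level random walk guaranteeing the interface contribution is $\ell^2$. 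Once the summability is in hand, the algebraic verification of the cocycle and harmonicity identities is a routine computation identical in spirit to Proposition \ref{SymZ}.
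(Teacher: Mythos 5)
Your overall strategy (build a non-zero $\mu$-harmonic cocycle into $\ell^2(\mathcal{S})$ for the right regular representation on a transitive Schreier graph and invoke Proposition \ref{noH_FD}) is the same as the paper's, but the construction you propose does not work, because it rests on a wrong picture of the Schreier graph. The orbit of the chosen vertex is not ``line-like with finite decorations'': it is an exponentially growing tree-like graph. Concretely, the relevant Schreier graph has as its core all finite words in $\{0,\dots,q-1\}$, with $t$ translating along bi-infinite vertical lines and the $F$-generators permuting the $q$ siblings $\{0v,\dots,(q-1)v\}$ at each word $v$; the level sets of the Busemann coordinate $\ell$ are \emph{infinite} (there are $q^n$ core vertices at depth $n$, plus infinitely many ray vertices at every level). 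This kills your candidate function in two ways. First, since the $F$-generators preserve $\ell$, a function depending only on $\ell$ has no horizontal gradient, so $\mu$-harmonicity forces it to be exactly affine in $\ell$ with no interface defect at all --- the ``single defect'' that produces the constant $2/3$ in Proposition \ref{SymZ} comes precisely from the transposition moving points between levels, which does not happen here. Second, even granting a defect constant $c\neq 0$, the function $b(t)(x)=h(xt)-h(x)-T_t$ would equal $-c$ on an entire (infinite) level set, hence would not lie in $\ell^2(\mathcal{S})$. So your construction yields only the zero cocycle or a non-square-summable one.

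The paper's harmonic function is genuinely different: it is \emph{not} a perturbed Busemann function but a bounded function of finite energy, with gradient $a/q^{|v|}$ on the vertical edge from $v$ to $0v$ and $a/(2q^{|v|})$ on the horizontal edges between siblings at depth $|v|$, constant on the attached rays. Harmonicity is achieved by balancing the vertical and horizontal gradients at each core vertex, and membership of $b(g)(x)=h(x.g)-h(x)$ in $\ell^2(\mathcal{S})$ (with no translation correction $T_g$) follows because the squared gradients decay like $q^{-2n}$ against $\asymp q^n$ edges per level, giving finite total energy. Your closing remark about ``transience of the level random walk'' is pointing at the right phenomenon --- this is a finite-energy-flow construction in the spirit of Proposition \ref{wreath3} --- but to repair your argument you would need to abandon the affine-in-$\ell$ ansatz entirely and build a function whose increments decay geometrically with depth, as the paper does.
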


\begin{proof}
The discrete affine group is generated by a shift of infinite order together with a copy of $F$. The Schreier graph of its action on $\mathbb{T}$ can be pictured as follows. We first describe a core that will support the gradient of our harmonic function. The vertex set consists in all finite words in the alphabet $\{0 \dots q-1\}$. For any such word $v$ (including the empty word) there is a vertical edge between $v$ and $0v$ and a horizontal edge between $iv$ and $jv$ for any $i,j$ in the alphabet. Moreover above each vertex in this core not starting by $0$ we attach a vertical infinite one-sided ray, see Figure \ref{fig1} for the case of a ternary tree. The action of $F$ is by permutation of $\{0v\dots (q-1)v\}$ in the obvious way and $t$ translate along vertical lines.

Let $\mu$ give mass $\frac{1}{4}$ to the shift and its inverse and subprobability $\frac{1}{2}\mathbf{u}_F$ to the copy of $F$. Then vertical edges have mass $\frac{1}{4}$ and horizontal edges have mass $\frac{1}{2q}$ each.

Given $a>0$ and an initial value $h(\emptyset)$, we define a function $h:\mathcal{S}\rightarrow \mathbb{R}$ by the following gradient rules on the core. 
\[
\forall i,j \in\{1,\dots,q-1\}, \forall v, \left\{ 
\begin{array}{l}
h(iv)-h(jv)=0, \\
h(iv)-h(0v)=\frac{a}{2q^{|v|}}, \\
h(0v)-h(v)=\frac{a}{q^{|v|}}.
\end{array}
\right. 
\]
where $|v|$ is the length of $v$. We extend it as constant on each infinite one-sided ray. By construction, $h$ is $\mu$-harmonic and by elementary computation, it has finite energy:
\[
\frac{1}{2}\sum_{x\in \mathcal{S}, g \in \rm{DA}} \left(h(x.g)-h(x)\right)^2\mu(g)=\sum_{n=0}^\infty \left(\frac{a}{q^n} \right)^2\frac{q^n}{4}+\left(\frac{a}{2q^{n+1}}\right)^2\frac{q^n}{2q}\frac{q(q-1)}{2}<\infty.
\]
Now set $\forall x \in \mathcal{S}$, $b(g)(x)=h(x.g)-h(x)$. Then $b$ is a non-zero $\mu$-harmonic cocycle with respect to the right regular representation of $\rm{DA}_F(\mathbb{T})$ acting on $\ell^2(\mathcal{S})$. Proposition \ref{noH_FD} applies.
\end{proof}

\begin{figure}
	\begin{center}
\begin{tikzpicture}
\draw[dotted] (-0.5,-0.75)--(7.5,-0.75);
\draw[thick] (0,0)--(1,0);
\draw (0,0) node[below]{$000$};
\draw (1,0) node[below]{$100$};
\draw[thick,dashed,blue] (1,0)--(1.25,0.5);

\draw[thick] (2,0)--(3,0);
\draw (2,0) node[below]{$010$};
\draw (3,0) node[below]{$110$};
\draw[thick,dashed,blue] (3,0)--(3.25,0.5);

\draw[thick] (4,0)--(5,0);
\draw (4,0) node[below]{$001$};
\draw (5,0) node[below]{$101$};
\draw[thick,dashed,blue] (5,0)--(5.25,0.5);

\draw[thick] (6,0)--(7,0);
\draw (6,0) node[below]{$011$};
\draw (7,0) node[below]{$111$};
\draw[thick,dashed,blue] (7,0)--(7.25,0.5);

\draw[thick] (0,0)--(0.5,1);
\draw (0.25,0.5) node[left,red]{$\frac{a}{q^2}$};
\draw (0.5,1) node[left]{$00$};
\draw[thick] (2,0)--(2.5,1);
\draw (2.25,0.5) node[left,red]{$\frac{a}{q^2}$};
\draw (2.5,1) node[right]{$10$};
\draw[thick] (0.5,1)--(2.5,1);
\draw (1.5,1) node[above,red]{$\frac{a}{2q^2}$};
\draw[thick,dashed,blue] (2.5,1)--(2.875,1.75);
\draw[thick] (4,0)--(4.5,1);
\draw (4.25,0.5) node[left,red]{$\frac{a}{q^2}$};
\draw (4.5,1) node[left]{$01$};
\draw[thick] (6,0)--(6.5,1);
\draw (6.25,0.5) node[left,red]{$\frac{a}{q^2}$};
\draw (6.5,1) node[right]{$11$};
\draw[thick] (4.5,1)--(6.5,1);
\draw (5.5,1) node[above,red]{$\frac{a}{2q^2}$};
\draw[thick,dashed,blue] (6.5,1)--(6.875,1.75);
\draw[thick] (0.5,1)--(1.5,2.5);
\draw (1,1.75) node[left,red]{$\frac{a}{q}$};
\draw (1.5,2.5) node[left]{$0$};
\draw[thick] (5.5,2.5)--(4.5,1);
\draw (5,1.75) node[left,red]{$\frac{a}{q}$};
\draw (5.5,2.5) node[right]{$1$};
\draw[thick] (1.5,2.5)--(5.5,2.5);
\draw (3.5,2.5) node[above,red]{$\frac{a}{2q}$};
\draw[thick,dashed,blue] (5.5,2.5)--(6,3.25);
\draw[thick] (1.5,2.5)--(3.5,5);
\draw (3.5,5) node[right]{$\emptyset$};
\draw (2.5,3.75) node[left,red]{$a$};
\draw[thick,dashed,blue] (4,5.625)--(3.5,5);

\end{tikzpicture}
	\end{center}
	\caption{The Schreier graph $\mathcal{S}$ of $\rm{DA}(\mathbb{T})$ acting on the ternary tree. The core (in this case a Fibonacci tree) is drawn in black and infinite one-sided rays are sketched in blue dashed. Edges are labelled in red with the gradient of the harmonic function $h$.}
	\label{fig1}
\end{figure}
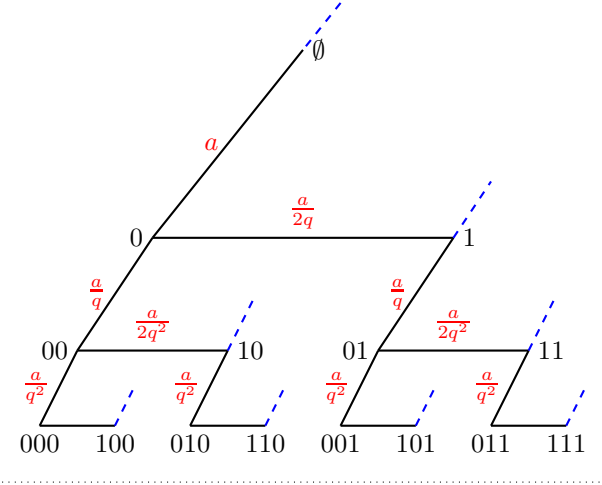

\begin{remark}
In contrast with Proposition \ref{DA}, we observe that the whole affine group of a regular tree (i.e. the group of automorphisms fixing an end--see  \cite{CKW} or \cite{BTZ}) has Shalom's Property $H_{\mathrm{FD}}$ as a topological group. This is proved along the same lines as \cite[Corollary 5.2.7]{Shalom}.
\end{remark}

\begin{remark}
These examples also answer the 4th item in \cite[Question 6.6]{Gournay} negatively. The question asked if $\pi$ is weakly mixing and there is an infinite $N\triangleleft G$ with $G/N$ cyclic and $\overline{H}^1(N,\pi_{|N})=0$, does $\overline{H}^1(G,\pi)=0$? Note that when $N$ is locally finite, it is a direct limit of finite groups, therefore $\overline{H}^1(N,\sigma)=0$ for any unitary representation $\sigma$ of $N$, see for example \cite[Lemma 5]{FV}. Therefore examples of locally-finite-by-$\mathbb{Z}$ groups without property $H_{\mathrm{FD}}$ answer this question negatively.
\end{remark}

We now apply the result on the discrete affine groups to a family of
locally-nilpotent-by-$\mathbb{Z}$ groups introduced by Gromov \cite[Section8.2]{Gromov}.
These groups give examples of elementary amenable groups with arbitrary fast growing F{\o}lner functions.

We first recall the construction in the special case of $\mathbb{Z}$
acting on itself.
Fix $q$ an integer or infinity. Let $\mathbf{F}_{\mathbb{Z}}(q)$ be the free product
of $\mathbb{Z}$ copies of $\mathbb{Z}/q\mathbb{Z}$, 
$\mathbf{F}_{\mathbb{Z}}(q)=\ast_{i\in\mathbb{Z}}\left\langle a_{i}\right\rangle $,
where $\left\langle a_{z}\right\rangle =\mathbb{Z}/q\mathbb{Z}$.
Given a subset $Y\subset\mathbb{Z}$, let $[Y]_{k}$ be the set of
commutators of $k$ letters $\left[\left[a_{i_{1}},a_{i_{2}}\right]\ldots a_{i_{k}}\right]$
where $i_{j}\in Y$ for $1\le j\le k$. Let $\mathbf{k}:2^{\mathbb{Z}}\to\mathbb{N}$
be a function on finite subsets of $X$ satisfying:
\begin{enumerate}
\item $\mathbf{k}(Y)=\mathbf{k}(Y+z)$ for any $Y\subset\mathbb{Z}$ and $z\in\mathbb{Z}$,
that is $\mathbf{k}$ is invariant under translation of $\mathbb{Z}$.
\item $\mathbf{k}(Y_{1})\le\mathbf{k}(Y_{2})$ if $Y_{1}\subseteq Y_{2}$. 
\end{enumerate}
Let $N(\mathbf{k})$ be the quotient of $\mathbf{F}_{\mathbb{Z}}(q)$
with the commutator relations $\left\{ [Y]_{\mathbf{k}(Y)}:\ Y\in2^{X}\right\} .$
Since the function $\mathbf{k}$ is $\mathbb{Z}$-invariant, we can
take the semi-direct product $N(\mathbf{k})\rtimes\mathbb{Z}$ where
$\mathbb{Z}$ acts by translating indices. When $\mathbf{k}(Y)$ is
finite for all finite subset $Y\subset\mathbb{Z}$, the group $N(\mathbf{k})$
is locally nilpotent and locally finite when $q$ is also finite.

As in \cite{Gromov}, we consider the following kind
of function $\mathbf{k}$: 
\[
\mathbf{k}(Y)=\kappa(\mbox{Diam}(Y)),
\]
where $\mbox{Diam}(Y)=\max\{|i-j|:\ i,j\in Y\}$ and $\kappa:\mathbb{N}\cup\{0\}\to\mathbb{N}$
is a non-decreasing function with $\kappa(0)=2$. Denote by $G(q,\kappa)$
the group $N(\mathbf{k})\rtimes\mathbb{Z}$ described above.

\begin{proposition}\label{prop_Gromov}
Let $q$ be a prime. For the group $G=G(q,\kappa)$ described above, we have
\begin{description}
\item [{(i)}] If $\kappa$ is bounded, then $G$ has Property
$H_{\mathrm{T}}$.
\item [{(ii)}] If $\kappa(n)>q^{n+1}$ for all $n$, then $G$ doesn't
have Property $H_{\mathrm{FD}}$.
\end{description}
\end{proposition}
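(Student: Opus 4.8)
\textbf{Proof plan for Proposition \ref{prop_Gromov}.}

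The plan is to treat the two items separately, exploiting the algebraic structure of $N(\mathbf k)$ according to whether $\kappa$ is bounded or grows quickly. For item (i), if $\kappa \le C$ is bounded, then every commutator bracket of length $C$ in the generators $a_i$ is trivial in $N(\mathbf k)$, so $N(\mathbf k)$ is nilpotent of class at most $C-1$; being a quotient of the free product of $\mathbb{Z}$-many copies of the finite group $\mathbb{Z}/q\mathbb{Z}$, it is moreover locally finite, and a nilpotent locally finite group generated (after adding the $\mathbb{Z}$-action) by finitely many torsion elements together with one element of infinite order is locally-\emph{normally}-finite-by-$\mathbb{Z}$: the key point is that in a nilpotent group the normal closure of a finite set of torsion elements is again finite (each term of the lower central series contributes only finitely many commutators, and there are finitely many of them since the class is bounded). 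Hence Theorem \ref{main} applies directly and gives property $H_{\mathrm T}$. I would spell out the verification that $N(\mathbf k)$ is locally normally finite as the one genuinely new computation here; everything else is quoting Theorem \ref{main}.

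For item (ii), the strategy is to produce a quotient of $G$ that is a discrete affine group of a regular tree and invoke Proposition \ref{DA}. When $\kappa(n) > q^{n+1}$, the relations imposed on $N(\mathbf k)$ only kill commutators that are ``long relative to their spread'', and in particular they are weak enough that $N(\mathbf k)$ surjects onto the group obtained from the iterated wreath/affine construction on the $(q+1)$-regular tree: concretely, map $a_i \mapsto$ the generator of $\mathrm{DA}_F(\mathbb T)$ that permutes the $q$ subtrees hanging at the depth determined by $i$, with $F = \mathbb{Z}/q\mathbb{Z}$ acting on $q$ letters (here $q$ prime guarantees a faithful transitive cyclic action). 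One checks that all the relations $[Y]_{\mathbf k(Y)}$ hold in the image because, in $\mathrm{DA}_F(\mathbb T)$, a commutator of the portraits supported on indices in $Y$ with $\mathrm{Diam}(Y) = n$ already vanishes after $q^{n+1}$ nested brackets (the supports of successive commutators descend the tree and eventually become empty), and $\mathbf k(Y) = \kappa(n) > q^{n+1}$. The $\mathbb{Z}$-action by index translation corresponds to the tree shift, so $G = N(\mathbf k) \rtimes \mathbb{Z}$ surjects onto $\mathrm{DA}_F(\mathbb T)$.

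Once the surjection $G \twoheadrightarrow \mathrm{DA}_F(\mathbb T)$ is in hand, the conclusion follows formally: by Proposition \ref{DA} there is a weakly mixing (indeed right-regular, fixed-vector-free) unitary representation $\pi$ of $\mathrm{DA}_F(\mathbb T)$ carrying a non-zero harmonic cocycle $b$; pulling $\pi$ and $b$ back along the surjection gives a non-zero harmonic cocycle on $G$ with no finite-dimensional subrepresentation (a finite-dimensional $G$-invariant subspace would, since $\pi$ factors through $\mathrm{DA}_F(\mathbb T)$, be $\mathrm{DA}_F(\mathbb T)$-invariant, contradicting weak mixing there), so $\overline{H}^1(G,\pi) \ne 0$ witnesses the failure of $H_{\mathrm{FD}}$; one must also note $G$ is still locally-finite-by-$\mathbb{Z}$ when $q$ is finite so the framework of Section \ref{sec:noHFD} applies, or simply argue directly from the definition of $H_{\mathrm{FD}}$. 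The main obstacle I anticipate is the bookkeeping in the second part: precisely matching the commutator-length threshold $q^{n+1}$ against the depth at which nested commutators of tree automorphisms supported on a diameter-$n$ index set become trivial, i.e.\ verifying that every defining relation of $N(\mathbf k)$ survives the homomorphism into $\mathrm{DA}_F(\mathbb T)$. This is where a careful description of how the generators $a_i$ sit inside $\mathrm{DA}_F(\mathbb T)$, following \cite{BTZ} and \cite{Gromov}, is needed.
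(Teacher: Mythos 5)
Your plan for item (ii) is essentially the paper's argument: realize $\mathrm{DA}_{\mathbb{Z}/q\mathbb{Z}}(\mathbb{T}_q)$ as a quotient of $G(q,\kappa)$ by checking that every relation $[Y]_{\mathbf{k}(Y)}$ dies in the image, and then pull back the harmonic cocycle of Proposition \ref{DA}. The paper does the bookkeeping you flag as the main obstacle by observing that $\langle t^{-i}a_0t^{i}:0\le i\le\ell\rangle$ is a finite $q$-group of order $q^{q^{\ell+1}}$, hence of nilpotency class at most $q^{\ell+1}$, so the hypothesis $\kappa(n)>q^{n+1}$ kills all the defining commutator relations in $\mathrm{DA}_{\mathbb{Z}/q\mathbb{Z}}(\mathbb{T}_q)$. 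That part is sound.

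Item (i), however, has a genuine gap. You assert that $N(\mathbf{k})$ is locally normally finite because ``in a nilpotent group the normal closure of a finite set of torsion elements is again finite.'' That is true for \emph{finitely generated} nilpotent groups, but $N(\mathbf{k})$ is infinitely generated, and there the claim fails. Take for instance $\kappa(0)=2$ and $\kappa(n)=3$ for $n\ge1$, so that $N(\mathbf{k})$ is the relatively free nilpotent-of-class-two product of the groups $\langle a_i\rangle\simeq\mathbb{Z}/q\mathbb{Z}$. The normal closure of the single element $a_0$ contains $[a_0,a_i]$ for every $i\in\mathbb{Z}$, and these are infinitely many independent non-trivial central elements; hence no finite normal subgroup of $N(\mathbf{k})$ contains $a_0$, the group is not locally normally finite, and Theorem \ref{main} does not apply directly. (Your parenthetical justification is exactly what breaks: already at the first step of the lower central series, $[a_0,\cdot]$ ranges over infinitely many generators.) The paper circumvents this by an induction on the nilpotency class $m=\sup_n\kappa(n)$: the \emph{center} $Z(N(\mathbf{k}))$ is the place where local normal finiteness genuinely holds (any finite subset of the center lies in a finite central, hence normal, subgroup), so Proposition \ref{main_prop} forces a harmonic cocycle with weakly mixing linear part to vanish there; the cocycle then factors through $(N(\mathbf{k})/Z(N(\mathbf{k})))\rtimes\mathbb{Z}$, which is a group of the same type with the class reduced by one, and the base case $m=2$ is the lamplighter $(\mathbb{Z}/q\mathbb{Z})\wr\mathbb{Z}$. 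You need to replace your one-step appeal to Theorem \ref{main} by such a descent (or some other argument handling the non-central part of $N(\mathbf{k})$).
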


\begin{proof}

We apply Proposition \ref{main_prop} inductively to prove
(i). Let $m=\sup_{n}\kappa(n)$, by assumption of (i) it is finite.
When $m=2$, the group $G(q,\kappa)$ is the lamplighter $(\mathbb{Z}/q\mathbb{Z})\wr\mathbb{Z}$,
it has property $H_{\mathrm{T}}$ by \cite{Shalom}.
Suppose the claim is true for $m$ and $\sup_{n}\kappa(n)=m+1$. Let
$b$ be a $\mu$-harmonic $1$-cocycle on $G$ with weakly
mixing representation. The center $Z(N(\mathbf{k}))$ of $N(\mathbf{k})$
is a locally normally finite subgroup of $N(\mathbf{k})$, therefore
by Proposition \ref{main_prop}, $b=0$ on $Z(N(\mathbf{k}))$.
In other words $b$ factors through to quotient group $G(q,\kappa')=(N(\mathbf{k})/Z(N(\mathbf{k})))\rtimes\mathbb{Z}$,
where $\kappa'=\max\{\kappa,m\}$. Then by the induction hypothesis,
$b=0$ on $G$. 

For (ii), we show that when $\kappa(n)>q^{n+1}$ for all $n$, the
discrete affine group $\textrm{DA}_{\mathbb{Z}/q\mathbb{Z}}(\mathbb{T}_{q})$ is a quotient of $G(q,\kappa)$.
Since $q$ is a prime, the subgroup $\rm{Hor}(\mathbb{T})$ is a
locally finite $q$-group. The subgroup generated by $\left\{ t^{-i}a_{0}t^{i}:\ 0\le i\le\ell\right\} $
is a finite $q$ group of cardinality $q^{q^{\ell+1}}$. Note that
its nilpotency class is bounded by $q^{\ell+1}$. It follows
that $\textrm{DA}_{\mathbb{Z}/q\mathbb{Z}}(\mathbb{T}_{q})$ is a quotient of $G(q,\kappa)$ if $\kappa(n)\ge q^{n+1}+1$
for all $n$. The statement of (ii) follows since by Proposition \ref{DA},
$\textrm{DA}_{\mathbb{Z}/q\mathbb{Z}}(\mathbb{T}_{q})$ does not have property $H_{\mathrm{FD}}$. 

\end{proof}

\section{An embedding result}\label{NNemb}

The goal of this section is to show Proposition \ref{NN}.
Let $C$ be the countable universal locally finite group of P. Hall \cite{Hall}.
Recall that $C$ can be constructed as the direct union of $\left(L_{n}\right)$,
where $L_{1}=\mathbb{Z}/3\mathbb{Z}$, $L_{n+1}$ is the full symmetric
group on $\left|L_{n}\right|$ elements, and $L_{n}$ is embedded
in $L_{n+1}$ via the regular representation. Moreover $C$ is a simple group, and there exists a set of generators
$\left\{ c_{1},c_{2},\ldots\right\} $ of $C$ such that $c_{s}^{2}=1$
for all $s$. 
Since any countable locally finite group $H$ embeds into $C$, Proposition \ref{NN} follows from:

\begin{proposition}\label{NNC}
There exists a
two generated locally-finite-by-$\mathbb{Z}$ group $G$ with Shalom's
property $H_{\mathrm{FD}}$ and containing $C$ as an embedded subgroup.
\end{proposition}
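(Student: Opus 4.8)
The strategy is to realize $C$ inside an HNN-type or wreath-type construction over $\mathbb{Z}$ whose base is locally finite, and then to invoke the Erschler--Ozawa sufficient condition for $H_{\mathrm{FD}}$ (the $\limsup_n \mu^{(n)}(B(e,c\sqrt n))>0$ criterion) rather than Theorem~\ref{main}, since $C$ is not locally \emph{normally} finite and so Theorem~\ref{main} does not apply directly. Concretely, I would build $G$ as a group generated by two elements $a$ and $t$, where $t$ has infinite order and acts on a locally finite normal subgroup $L$ so that conjugates $t^{-s}a\,t^{s}$, together with suitable finite products, recover all the Hall generators $c_s$ (using $c_s^2=1$ and the explicit tower $L_1\hookrightarrow L_2\hookrightarrow\cdots$). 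The point is to engineer the action of $t$ so that the pieces $L_n$ sit in disjoint ``windows'' along $\mathbb{Z}$, arranged sparsely enough that a random walk which drifts on the $\mathbb{Z}$-coordinate returns to each window often on the diffusive scale.

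\textbf{Key steps.} First, I would fix the Hall tower $(L_n)$ with involutive generators $c_s$ and set up the base group $L=\bigoplus_{z\in\mathbb{Z}}$ (copies/fragments of the $L_n$), with $t$ acting by a shift-like automorphism; define $a$ to be a single well-chosen involution so that $\{t^{-s}at^s : s\ge 0\}$ generates a subgroup containing an isomorphic copy of $C$ (here the simplicity and the explicit regular-embedding structure of $C$ are what make the bookkeeping possible). Second, verify that $G=\langle a,t\rangle$ is indeed locally-finite-by-$\mathbb{Z}$: the kernel of the projection $G\to\mathbb{Z}$ (sending $t\mapsto 1$, $a\mapsto 0$) is generated by the $t$-conjugates of $a$, each conjugate lies in a finite subgroup, and any finite set of them lies in a finite subgroup because only finitely many windows are involved; hence $L$ is locally finite and $G/L\cong\mathbb{Z}$. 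Third, confirm $C\hookrightarrow G$ from the construction in step one. Fourth — the analytic heart — choose a finitely supported symmetric $\mu$ on $\{a,t\}^{\pm}$ whose $\mathbb{Z}$-projection is the lazy simple random walk, and show $\limsup_n \mu^{(n)}(B(e,c\sqrt n))>0$: since the cursor on $\mathbb{Z}$ is diffusive and the lamp-type coordinates only change near visited windows, one bounds $d(e,\cdot)$ in terms of the local time spent in the relevant windows, which is $O(\sqrt n)$ with positive probability on the event that the walk stays in a band of width $O(\sqrt n)$. Invoking the Erschler--Ozawa corollary then gives $H_{\mathrm{FD}}$.

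\textbf{Alternative and main obstacle.} A cleaner route, if it works, is to take $G$ to be (a two-generated subgroup of) a wreath-like product $C\wr_{\text{something}}\mathbb{Z}$ or a diagonal-product analogue, arranged so that Theorem~\ref{main}'s proof via Proposition~\ref{main_prop} still runs — but this seems to require the normality that $C$ lacks, so I expect we genuinely need the Erschler--Ozawa criterion and a hands-on random-walk estimate. The main obstacle is precisely that metric estimate: one must control the word length in $G$ of a product $t^{s_1}a\,t^{s_2}a\cdots$ well enough to see that typical $\mu^{(n)}$-mass sits in $B(e,c\sqrt n)$, and this is delicate because, unlike the lamplighter where lamp states are cheap, here the ``lamps'' live in the growing groups $L_n$ and a single excursion to a far window may be expensive; the fix is to make the windows grow and separate slowly enough (a mild, explicit condition on the embedding schedule) that the expensive excursions are rare on the diffusive event. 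The second, more routine, obstacle is verifying faithfulness of the copy of $C$ inside $G$ — i.e. that the relations imposed by the $\mathbb{Z}$-action do not collapse $C$ — which follows from choosing the action to be ``generic'' with respect to the Hall presentation, but must be checked carefully against the explicit tower.
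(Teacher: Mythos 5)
Your construction is essentially the one in the paper: the group is the two-generated subgroup $\langle f,t\rangle$ of the unrestricted wreath product $C\wr\wr\mathbb{Z}$, where $f$ places the Hall involutions $c_s$ at sparsely spaced positions $k_s$; the verification that this group is locally-finite-by-$\mathbb{Z}$ and contains $C$ (via products of commutators of conjugates $t^{-k_s}ft^{k_s}$, using simplicity of $C$ and the doubling condition $k_{s+1}>2k_s$) is exactly Fact \ref{copy_C}. You are also right that Theorem \ref{main} cannot be invoked and that an Erschler--Ozawa criterion is needed.

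The genuine gap is in your fourth step, which you yourself flag as the main obstacle but do not resolve. First, the quantity you propose to control is not the right one: the cursor of the walk stays in a band of width $O(\sqrt{n})$ around the origin and never ``returns to the windows'' $k_s$; rather, each application of $f$ at cursor position $x$ writes $c_s$ at $k_s+x$ \emph{for every $s$ simultaneously}. Consequently, bounding $d(e,Z_n)$ requires showing that the coupled contributions to the different windows can be re-synthesized by a short word, which forces a localization-by-commutators argument whose cost must be controlled; your stated condition that the windows should ``grow and separate slowly enough'' points in the wrong direction (one needs $(k_s)$ to grow \emph{fast}, so that at scale $\sqrt{n}$ only a very slowly growing number of windows interact). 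Whether $\limsup_n\mu^{(n)}(B(e,c\sqrt{n}))>0$ for \emph{all} $c>0$ actually holds here is therefore a nontrivial claim that your sketch does not establish. The paper avoids word-length estimates entirely: it proves (Lemma \ref{random_wreath}) that for a finite lamplighter $F\wr\mathbb{Z}$ one has $\tfrac12\Vert\mu^{(n)}-\mu^{(n+\delta n)}\Vert_1\le 1-c$ for large $n$, by using local-time lower bounds to uniformly randomize all lamps in $[-c_1\sqrt{n},c_1\sqrt{n}]$; it then observes that $\langle f,t\rangle$ and its finite-lamplighter truncation $F\wr\mathbb{Z}$ with $F=\langle c_1,\dots,c_s\rangle\times\mathbb{Z}/2\mathbb{Z}$ coincide on balls of radius $<k_{s+1}-k_s$, so the estimate transfers along a subsequence $n_s$ when $(k_s)$ grows fast enough, and concludes by the total-variation criterion \cite[Corollary 2.5]{EO} rather than the ball criterion. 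To complete your proposal you would either have to carry out the delicate word-metric estimate, or switch to this subsequence/total-variation route.
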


Before proceeding to the proof, first observe that up to enlarging the generating sequence $(c_s)$ of involutions, we can assume that for infinitely many $s$, the subgroup $F_s=\langle c_1,\dots,c_s\rangle$ is simple.

Indeed given finitely many involutions in $C$, the group they generate is finite so there exists an abstract finite simple group generated by involutions that contains it. By universality, a copy $A_s$ of this abstract group lies in $C$. By \cite[Theorem 1(ii)]{Hall}, there is some $g$ in $C$ such that $F_s^g$ is contained in $A_s$. Then $A_s^{g^{-1}}$ contains $F_s$ and is generated by old and new involutions.

Recall that given two groups $H$ and $\Gamma$, their restricted wreath product is the group $H \wr \Gamma=\left( \oplus_{\Gamma} H \right) \rtimes \Gamma$, where $\Gamma$ acts on the direct sum by permutation of the factors and their unrestricted wreath product is the group $H \wr\wr \Gamma=H^{\Gamma} \rtimes \Gamma$ with the same action. Their elements are pairs $(f,g)$ where $f$ is a function $\Gamma\rightarrow H$, finitely supported for the restricted $H \wr \Gamma$, and $g$ an element of the base group $\Gamma$. 

Let $(k_s)_{s \geq 1}$ be an increasing sequence of integers. Define a function $f : \mathbb{Z} \to C$ by
\[
f(x):=\left\{\begin{array}{ll} c_s & \textrm{ if } x=k_s, \\
e & \textrm{ otherwise.}
\end{array} \right.
\]

\begin{fact}\label{copy_C}
Assume $k_{s+1}>2k_s$ for all $s$. Then the group $\langle f,t\rangle<C \wr\wr \mathbb{Z}$ contains an embedded copy of $C$ and is locally-finite-by-$\mathbb{Z}$.
\end{fact}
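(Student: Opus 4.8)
The plan is to establish the two assertions of Fact \ref{copy_C} separately, the first being the more delicate one. First I would identify the group $\langle f, t \rangle$ inside $C \wr\wr \mathbb{Z}$, where $t = (\boldsymbol{e}, 1)$ is the translation. Conjugating $f$ by powers of $t$ shifts the support function, so $t^{-n} f t^n$ is the function $x \mapsto f(x+n)$. The key observation is that because the gaps $k_{s+1} - k_s$ grow (indeed $k_{s+1} > 2k_s$), by choosing an appropriate power $n$ one can isolate a single value $c_s$ at a controlled position; more precisely, taking commutators and products of the translates $t^{-n}ft^n$ one can produce, for each fixed $s$, an element of $\langle f,t\rangle$ whose lamp configuration is supported on finitely many sites and carries the value $c_s$ at (say) position $0$. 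The main work is a bookkeeping argument showing that the group generated by $f$ and $t$ contains the ``diagonal'' copy of $C$ sitting at a single coordinate — i.e. the subgroup $\{(\gamma \delta_0, 0): \gamma \in C\}$ — which is isomorphic to $C$. This is exactly the Neumann--Neumann type trick: the spreading-out of the generators $c_s$ along the sparse set $\{k_s\}$ combined with the shift lets one recover each generator individually.

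To carry this out concretely I would fix $s$ and look at the element $w_s := [t^{-k_s} f t^{k_s}, \text{(something)}]$ — or more simply argue by induction on $s$: assuming we have already realized $c_1, \dots, c_{s-1}$ as lamp values at position $0$, multiply $f$ (or a translate of it) by suitable elements already constructed to cancel all lamp values except the one equal to $c_s$, using that the positions $k_1 < \dots < k_s$ are distinct and that translation by $k_s$ brings $c_s$ to the origin. The condition $k_{s+1} > 2k_s$ guarantees that when we translate to isolate $c_s$, the other active sites $k_1,\dots,k_{s-1}$ and $k_{s+1},k_{s+2},\dots$ land at positions that are either already handled or still far away and can be dealt with at later stages; this separation is what prevents interference. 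Once every $c_s$ appears as a lamp value at the origin, since the $c_s$ generate $C$ and the set of lamp configurations supported at $\{0\}$ forms a subgroup isomorphic to $C$, we conclude $C \hookrightarrow \langle f, t\rangle$.

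For the second assertion, that $\langle f, t \rangle$ is locally-finite-by-$\mathbb{Z}$, let $\phi : C \wr\wr \mathbb{Z} \to \mathbb{Z}$ be the projection to the base, so $\phi(f) = 0$ and $\phi(t) = 1$; then $\phi(\langle f,t\rangle) = \mathbb{Z}$ and the kernel $L = \langle f,t\rangle \cap (C^{\mathbb{Z}} \rtimes \{0\})$ fits in $1 \to L \to \langle f,t\rangle \to \mathbb{Z} \to 1$. It remains to check $L$ is locally finite: any finite subset of $L$ involves only finitely many coordinates in $\mathbb{Z}$ and only finitely many of the generators $c_s$ (hence lies in $F_{s_0}^{\mathbb{Z}}$ restricted to a bounded window for some $s_0$), so it generates a finite group. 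Thus $L$ is a direct union of finite groups and $\langle f,t\rangle$ is locally-finite-by-$\mathbb{Z}$.

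The hard part will be the first assertion: making the isolation/cancellation argument fully precise — i.e. writing down explicitly, for each $s$, a word in $f$ and $t$ whose lamp configuration is exactly $c_s\delta_0$, and verifying that the induction closes using only the hypothesis $k_{s+1} > 2k_s$. The subtlety is that $t^{-n}ft^n$ for various $n$ shifts \emph{all} the lamp values simultaneously, so one cannot isolate a single $c_s$ in one step; the spacing condition is precisely what is needed to decouple the sites across the inductive stages. Everything else (the exact sequence, local finiteness of the kernel, and the fact that $c_s^2 = 1$ so that $\langle c_1,\dots,c_s\rangle$ is finite) is routine.
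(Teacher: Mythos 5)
Your plan founders on the fact that $f$ is an element of the \emph{unrestricted} wreath product: its lamp configuration has infinite support $\{k_1,k_2,\dots\}$, and this breaks both halves of your argument. For the embedding of $C$, the inductive step ``multiply $f$ (or a translate of it) by suitable elements already constructed to cancel all lamp values except the one equal to $c_s$'' cannot be carried out: the elements already constructed are finitely supported, so no finite product of them can cancel the infinite tail of $f$, and products of translates $t^{-n}ft^{n}$ of $f$ itself remain infinitely supported. The mechanism that actually produces finitely supported elements is the commutator: since $k_{s+1}>2k_s$ forces all differences $k_a-k_b$ to be distinct, the supports of two translates $f^{k_s}=t^{-k_s}ft^{k_s}$ and $f^{k_r}$ meet only at $0$, so $[f^{k_s},f^{k_r}]$ equals $[c_s,c_r]\delta_0$. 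But these commutators do not obviously yield each $c_s\delta_0$ individually; the paper passes from them to all of $C$ at coordinate $0$ by invoking \emph{simplicity} of Hall's group (every $c\in C$ is a product of commutators of the generators, or alternatively: the subgroup of elements supported at $\{0\}$ contains all $[c_s,c_r]$ and is normalized by conjugation by the $f^{k_{s'}}$, hence is a nontrivial normal subgroup of $C$). Your sketch never uses simplicity, and without it the isolation of individual generators at the origin is exactly the step that fails.

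The same oversight invalidates your proof of local finiteness. The claim that ``any finite subset of $L$ involves only finitely many coordinates in $\mathbb{Z}$ and only finitely many of the generators $c_s$'' is false: $f$ itself lies in the kernel $L$, and its configuration is supported on all of $\{k_s\}$ with values running through all the $c_s$. The paper's argument is necessarily more delicate: for an element $(\varphi,0)$ of word length at most $R$, it shows that $\varphi$ restricted to $[-2R,2R]$ takes values in the finite group $\langle c_1,\dots,c_{\log_2 R}\rangle$, while on the far windows $[k_s-R,k_s+R]$ the values are \emph{synchronized in $s$} (at a given offset $x$ the lamp equals $c_s$ for every large $s$, or is trivial for every large $s$), so the infinite tail is encoded by an element of $(\mathbb{Z}/2\mathbb{Z})^{[-R,R]}$ and the ball of radius $R$ in $L$ generates a subgroup embedding into a finite group. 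Some such bookkeeping of the tail is unavoidable; asserting finite support of kernel elements will not do.
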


\begin{proof}
By simplicity, any $c$ in $C$ is a finite product of commutators $c=\prod [c_{s_j},c_{r_j}]$. Then $\prod[f^{k_{s_j}},f^{k_{r_j}}]$ is a function taking value $c$ in $0$ and trivial elsewhere by strict doubling of $(k_s)$. This gives a copy of $C$.

Let the element $(\varphi,0)$ belong to the kernel of the map $\langle f,t \rangle \to \mathbb{Z}$ be of length less than $R$ with respect to the ambiant word metric. Then for all $x \in [-2R,2R]$, $\varphi(x)$ is a word of length at most $R$ in the finite group $C_R=\langle c_1,\dots, c_{\log_2 R}\rangle$. And whenever $k_s-k_{s-1} \ge R$ any $x \in [k_s-R,k_s+R]$ satisfies that either $\varphi(k_s+x)=c_s$ for all $s$ or $\varphi(k_s+x)=e_C$ for all $s$. Moreover the function $\varphi$ is trivial outside of these intervalles. It follows that the group generated by these elements embeds into the finite group $C_r^{[-2R,2R]}\times \mathbb{Z}/2\mathbb{Z}^{[-R,R]}$.
\end{proof}

To prove Proposition \ref{NNC}, we will show that $\langle f,t\rangle$ has property $H_{\mathrm{FD}}$ when the sequence $(k_s)$ grows sufficiently fast. The key point is the following:

\begin{lemma}\label{random_wreath}
For any $\delta>0$, there exists a constant $c>0$ such that the following holds. Let $F$ be a finite group and $\mu$ a probability measure on $F \wr \mathbb{Z}$ with finite generating support such that $\mu(t)=\mu(t^{-1})=\frac{1}{4}$ and $\mu(\oplus_{\mathbb{Z}} F)=\frac{1}{2}$. Then there exists an integer $M$ depending on $F$ and $\mu$ such that 
\[
\frac{1}{2}\Vert \mu^{(n)}-\mu^{(n+\delta n)} \Vert_1 \leq 1-c, \textrm{ for all } n \geq M.
\]
\end{lemma}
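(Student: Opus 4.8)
\textbf{Proof plan for Lemma \ref{random_wreath}.}

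The plan is to estimate the total variation distance $\frac12\Vert \mu^{(n)}-\mu^{(n+\delta n)}\Vert_1$ from above by exhibiting a single event of probability bounded below by $c>0$ on which the two distributions coincide after coupling, or more directly by identifying a ``macroscopic coordinate'' of the random walk that is nearly identically distributed at times $n$ and $n+\delta n$. First I would recall that for an element $g=(\varphi,x)$ of $F\wr\mathbb{Z}$, the position $x$ of the cursor is the image of $g$ under the projection $F\wr\mathbb{Z}\to\mathbb{Z}$, and the $\mathbb{Z}$-projection of $\mu$ is exactly the law of the lazy simple random walk (mass $\frac14$ on $\pm 1$, mass $\frac12$ on $0$), since every step in $\oplus_{\mathbb{Z}}F$ fixes the cursor. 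Thus the projection of $\mu^{(n)}$ to $\mathbb{Z}$ is the $n$-fold convolution $\nu^{(n)}$ of this lazy walk, and likewise at time $n+\delta n$.

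The key observation is that for the lazy simple random walk on $\mathbb{Z}$, the distributions $\nu^{(n)}$ and $\nu^{(n+\delta n)}$ are \emph{not} close in total variation: by the local central limit theorem, $\nu^{(n)}$ is spread over a window of width $\asymp\sqrt n$ around $0$ while $\nu^{(n+\delta n)}$ is spread over a window of width $\asymp\sqrt{n+\delta n}=\sqrt{1+\delta}\,\sqrt n$, and one checks that $\frac12\Vert\nu^{(n)}-\nu^{(n+\delta n)}\Vert_1$ converges, as $n\to\infty$, to a strictly positive constant $c_0=c_0(\delta)$ — essentially the total variation distance between two centered Gaussians of variances in ratio $1+\delta$. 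Hence I would set $c:=c_0(\delta)/2$ (or any value strictly below the limit), so that for all $n\ge M$ with $M=M(\delta)$ large enough, $\frac12\Vert\nu^{(n)}-\nu^{(n+\delta n)}\Vert_1\ge c$. Finally, since total variation distance cannot increase under the push-forward by the projection $F\wr\mathbb{Z}\to\mathbb{Z}$, we get the reverse inequality
\[
\tfrac12\Vert\mu^{(n)}-\mu^{(n+\delta n)}\Vert_1\ \ge\ \tfrac12\Vert\nu^{(n)}-\nu^{(n+\delta n)}\Vert_1\ \ge\ c,
\]
which is the \emph{opposite} of what the lemma claims. So this naive approach only bounds the distance from below, and I need the upper bound instead.

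To get the upper bound $\frac12\Vert\mu^{(n)}-\mu^{(n+\delta n)}\Vert_1\le 1-c$, I would construct an explicit coupling of the two walks that agrees with positive probability. Run the $\mu$-walk for $n$ steps to reach $(\varphi_n,x_n)$; separately run it for $n+\delta n$ steps. The idea is: with probability bounded below, the extra $\delta n$ steps can be arranged (on a good event) to be a loop that returns the cursor to where it was and restores the lamp configuration. Concretely, condition on the event $E$ that among the first $\lfloor\delta n\rfloor$ steps of the longer walk, the moves are chosen so that the lamp-part increments cancel (possible because $F$ is finite and each generator in $\oplus_{\mathbb{Z}}F$ has finite order, so a suitable finite word equals the identity) and the $\mathbb{Z}$-part increments sum to zero; then couple the remaining $n$ steps of the longer walk identically with the $n$ steps of the shorter walk. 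On $E$ the two endpoints coincide, so the coupling inequality gives $\frac12\Vert\mu^{(n)}-\mu^{(n+\delta n)}\Vert_1\le 1-\mathbb{P}(E)$, and $\mathbb{P}(E)\ge c>0$ uniformly in $n\ge M$ because it only involves a bounded number ($\lfloor\delta n\rfloor$, but actually a \emph{fixed} number after noting one can ``do nothing'' in most of those steps, laziness $\mu(e)>0$ being available inside the support, or more carefully by a renewal/return-probability estimate for the lazy walk) of steps. The main obstacle is making $\mathbb{P}(E)$ bounded below \emph{uniformly in $n$}: the cleanest route is to invoke that the lazy simple random walk on $\mathbb{Z}$ returns to the origin at a prescribed time with probability $\asymp 1/\sqrt{\delta n}$, which is not uniformly bounded below, so instead one should let the ``correction'' happen over all of the $\delta n$ extra steps and use that the \emph{increments} of a $\delta n$-step lazy walk land in $\oplus F$-neutral configuration with the cursor back at $0$ with probability $\asymp(\delta n)^{-1/2}$ times the probability the lamp loop closes; summing the geometric-type series over the number of excursions, as in the proof of Proposition \ref{main_prop} via \cite[Theorems 9.1, 9.4, 2.8]{Revesz2013}, yields a bound independent of $n$. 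I expect this uniform lower bound on the coupling success probability, together with checking the lamp configuration can be restored using finitely many generators of the finite group $F$, to be the technical heart of the argument.
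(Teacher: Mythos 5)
Your proposed coupling has a genuine gap that your own closing sentences half-acknowledge but do not resolve. You want an event $E$, of probability bounded below uniformly in $n$, on which the extra $\lfloor\delta n\rfloor$ steps of the longer walk form an exact loop: cursor returned to its position \emph{and} lamp configuration exactly restored. The cursor-return part alone already costs $\asymp(\delta n)^{-1/2}$, but the fatal issue is the lamp part: during those $\delta n$ extra steps the cursor typically visits $\asymp\sqrt{\delta n}$ distinct sites, and at each visited site the net lamp increment must be the identity of $F$. Even in the most favorable modelling this costs a constant factor (roughly $1/|F|$) \emph{per visited site}, so $\mathbb{P}(E)$ decays like $e^{-c\sqrt{\delta n}}$, not like a constant. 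Neither of your suggested rescues works: ``doing nothing'' for the extra steps has probability $\mu(e)^{\delta n}$ (and $\mu(e)>0$ is not even guaranteed by the hypotheses), and the excursion-summation from the proof of Proposition \ref{main_prop} does not apply because there one only needs to correct a \emph{single} lamp at a \emph{fixed} site, each visit giving an independent chance $\varepsilon$ to succeed; here you would need to simultaneously correct $\asymp\sqrt{n}$ lamps.

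The idea you are missing is that one should not couple the two endpoints pathwise, but exhibit a common \emph{subprobability measure} of mass $c$ dominated by both $\mu^{(n)}$ and $\mu^{(n+\delta n)}$ (this bounds the total variation by $1-c$). The paper does this as follows: by the local-time estimates, with probability $\geq c_3>0$ every site in $[-c_1\sqrt{n},c_1\sqrt{n}]$ has local time $\geq c_2 n^{1/2-\delta}$ by time $n$; conditioned on this, each such lamp is within distance $(1-\varepsilon)^{c_2 n^{1/2-\delta}/R}$ of the \emph{uniform} distribution on $F$, so for large $n$ all lamps in the window are exactly uniformly randomized on a sub-event of positive mass, with the cursor in the inner half of the window. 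The point is that full uniform randomization of the lamps in the window is an invariant (absorbing) state for any further steps that keep the cursor inside the window, which happens during the extra $\delta n$ steps with probability $c_4>0$. Hence $\mu^{(n)}$ and $\mu^{(n+\delta n)}$ both dominate the same subprobability (uniform lamps in the window, cursor in the inner window) of mass $c=c_4$. This replaces your exact-loop requirement, whose probability vanishes, by a distributional coincidence that holds with uniformly positive probability. Your preliminary observation about the $\mathbb{Z}$-projections is correct but, as you note, only yields a lower bound on the total variation and is consistent with the lemma.
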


\begin{proof}[Proof of Lemma \ref{random_wreath}]
From the distribution of  local time \cite[Theorem 9.4]{Revesz2013}, we obtain for any $\delta>0$ there exists constants $c_1,c_2,c_3>0$ such that
\begin{eqnarray}\label{Trotter}
\mathbb{P}(\forall x \in [-c_1\sqrt{n},c_1 \sqrt{n}], L(x,n)\geq c_2 n^{1/2-\delta})\geq c_3>0.
\end{eqnarray}

There also exists an integer $R$ such that $\mu^{(R)}$ contains the copy of $F$ at $0$ in its support. 
Following the proof of Proposition \ref{main_prop}, we set $\varepsilon=|F|\inf_{z \in F} \mu^{(R)}(z)>0$.

Under the condition in (\ref{Trotter}), for $|x|\le c_1\sqrt{n}$, the law of the lamp at $x$  at time $n$ of the random walk has the form
\[
(1-(1-\varepsilon)^{c_2\frac{n^{1/2-\delta}}{R}})\mathbf{u}_{F}+(1-\varepsilon)^{c_2\frac{n^{1/2-\delta}}{R}}\nu,
\]
for an auxiliary probability $\nu$. Thus for $n$ large enough, all the lamps in this interval are $F$-uniformly randomized with probability at least $\frac{c_3}{2}$. We can even assume that the cursor $\phi(Z_n)$ moreover belongs to $[-\frac{c_1}{2}\sqrt{n},\frac{c_1}{2}\sqrt{n}]$ with some uniform positive probability $c'_3>0$.

This gives a subprobability $\zeta_n \le \mu^{(n)}$ of mass $c_3'>0$ under which all lamps in $[-c_1\sqrt{n},c_1 \sqrt{n}]$ are $F$-uniformly randomized and the cursor $\phi(Z_n)$ lies in $[-\frac{c_1}{2}\sqrt{n},\frac{c_1}{2}\sqrt{n}]$.

For $\delta>0$, there is a probability $c_4>0$ that the cursor remains in $[-c_1\sqrt{n}+R',c_1 \sqrt{n}-R']$ up to time $n+\delta n$ and finishes again in $[-\frac{c_1}{2}\sqrt{n},\frac{c_1}{2}\sqrt{n}]$. Therefore, $\frac{c_4}{c'_3}\zeta_n$ is a subprobability of uniform mass $c=c_4>0$ of both $\mu^{(n)}$ and $\mu^{(n+\delta n)}$.
Here $R'$ is such that $f(x)=e$ for all $|x|>R'$ and $(f,0)$ in the support of~$\mu$.
\end{proof}

\begin{proof}[Proof of Proposition \ref{NNC}]
Let $s$ be such that $\langle c_1,\dots,c_s\rangle$ is simple.
Let $F=\langle c_1,\dots c_s\rangle \times \mathbb{Z}/2\mathbb{Z}$ and $f:\mathbb{Z}\to F$ be given by:
\[
f(x):=\left\{\begin{array}{ll} (c_s,0) & \textrm{ if } x=k_s, s\ge 2, \\
(c_1,1) & \textrm{ if } x=k_1, \\
(e,0) & \textrm{ otherwise.}
\end{array} \right.
\]
The argument of Fact \ref{copy_C} shows that $t$ and $f$ generate $F\wr \mathbb{Z}$, so Lemma \ref{random_wreath} applies.
We observe that the groups generated by $f$ and $t$ in $F\wr \mathbb{Z}$ and $C \wr \wr \mathbb{Z}$ coincide on any ball of radius $<k_{s+1}-k_s$. The factor $\mathbb{Z}/2\mathbb{Z}$ takes into account the generators $c_{s+1},c_{s+2}\dots$ not interplaying together in this ball. Therefore, if we let $(k_s)$ grow fast enough, we can use Lemma \ref{random_wreath} locally and obtain a sequence $n_s\to \infty$ with 
\[
\frac{1}{2}\left\Vert \mu^{(n_s)}-\mu^{(n_s+\delta n_s)} \right\Vert_1 \leq 1-c, \textrm{ for all } s.
\]
Proposition \ref{NNC} follows from \cite[Corollary 2.5]{EO}.
\end{proof}

We remark that by the proof, the group $G$ can be chosen with speed arbitrarily close to diffusive, namely below $\sqrt{n}f(n)$ for any $f(n)\to \infty$.

\bigskip
\textbf{Acknowledgement.}  We thank Anna Erschler for questions about Shalom's property $H_\mathrm{FD}$ which motivate this work, and for pointing out to us the examples of locally-nilpotent-by-$\mathbb{Z}$ groups in \cite{Gromov}. We thank Gidi Amir for many discussions on minimal growth of harmonic functions on groups. We thank Peter Kropholler for useful comments. 

J.B.\ is partially supported by project AGIRA ANR-16-CE40-0022.

\appendix

\section{Wreath products of infinite groups do not have property $H_{\mathrm{FD}}$}\label{appendix}

In \cite[Section 6.6]{Shalom}, Shalom conjectured that the wreath product of two infinite finitely generated amenable groups never has property $H_{\mathrm{FD}}$. The object of this appendix is to prove this result. We will follow largely Shalom's original paper, where the conjecture was already proven when $H$ has infinite abelianization \cite[Section 5.4]{Shalom}.
As an abuse of notation, we will write $H^G$ instead of $\oplus_{G} H$ in this appendix.

\begin{proposition}
If $G,H$ are finitely generated infinite groups and $H$ is amenable, then $H \wr G$ does not have Shalom's property $H_{\mathrm{FD}}$.
\end{proposition}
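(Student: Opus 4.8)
The plan is to produce, for every finitely generated infinite amenable $H$ and finitely generated infinite $G$, a non-zero harmonic cocycle with weakly mixing linear part on $H \wr G$; by the characterization recalled in Section \ref{sec:setting} this is exactly what is needed to defeat property $H_{\mathrm{FD}}$. Since Shalom already handled the case where $H$ has infinite abelianization, I would reduce to the case where $H^{\mathrm{ab}}$ is finite — indeed one may even assume $H$ is perfect after passing to a suitable quotient, though it is cleaner to keep $H$ as is and instead exploit that $H$ is infinite amenable, hence admits a non-principal invariant mean, equivalently a Reiten-type almost-invariant sequence. The target representation will be the natural analogue of the representations used in the $(\mathbb{Z}/2\mathbb{Z})\wr H$ case (Proposition \ref{wreath3}) and in Shalom's treatment: something like a Koopman-type or quasi-regular representation of $H\wr G$ on an $\ell^2$-space built from configurations, twisted so that the lamp group $H$ enters through a unitary (or orthogonal) representation of $H$ on a Hilbert space $\mathcal{K}$ with no invariant vectors, and the base group $G$ enters through a translation action on $\bigoplus_G \mathcal{K}$.

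\textbf{Key steps.} First I would fix a finitely supported symmetric generating measure $\mu$ on $H \wr G$ whose projection to $G$ generates and whose lamp part is supported on generators of $H$. Second, choose a unitary representation $\sigma$ of $H$ on $\mathcal{K}$ with $\overline{H}^1(H,\sigma)\ne 0$ if $H$ has infinite abelianization (the easy case, already done), and otherwise use that $H$ is infinite amenable to build a virtual coboundary: take $\mathcal{K}=\ell^2(H)$ with the left regular representation, pick an almost-invariant sequence $\xi_n\in\ell^2(H)$, and rescale to get a genuine cocycle in the limiting sense — this is where the amenability of $H$ is essential and replaces Shalom's abelianization hypothesis. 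Third, assemble the representation $\pi$ of $H\wr G$ on $\mathcal{H}=\ell^2(G)\otimes(\text{something})$ or on a space of $\mathcal{K}$-valued functions on $G$ so that: (a) $G$ acts by the regular/translation part, which already forces $\pi$ to have no finite-dimensional subrepresentation, so $\pi$ is weakly mixing; (b) the lamp subgroup $\bigoplus_G H$ acts through copies of $\sigma$ at each site. Fourth, write down the cocycle: roughly $b((f,g)) = v - \pi((f,g))v$ for a vector $v$ living in a completion $W\supset \mathcal{H}$ (a virtual coboundary, exactly as in the body of the paper), choosing $v$ so that $b$ lands in $\mathcal{H}$ and is $\mu$-harmonic. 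The harmonicity is arranged, as in Proposition \ref{wreath3}, by balancing the lamp-flip contribution against the $G$-displacement contribution with a free constant; finiteness of energy of $b$ on generators comes from the almost-invariance estimate on $\xi_n$ (for the lamp part) and from transience-free soft $\ell^2$ bounds (for the $G$-part, using that $G$ is infinite — here one may need to enlarge the representation or use a sum over $G$-translates with $\ell^2$-summable weights to force square-summability even when $G$ is recurrent, which is the technical heart). Fifth, check $b\ne 0$: it suffices that $b$ is non-zero on a single lamp generator, which follows because $\sigma$ has no invariant vectors and the chosen $v$ is genuinely moved.

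\textbf{Main obstacle.} The delicate point is producing a \emph{square-summable} (finite-energy) harmonic cocycle rather than merely a cocycle into some larger space, and doing so uniformly over all infinite $G$ — including $G$ on which simple random walk is recurrent, such as $\mathbb{Z}$ and $\mathbb{Z}^2$ — since there the obvious voltage-function trick used in Proposition \ref{wreath3} fails. I expect one must instead exploit the lamp side: because $\bigoplus_G H$ is infinite and the random walk spreads the lamp randomization over $\asymp\sqrt n$ (or $n/\log n$) sites, the cocycle built from an almost-invariant sequence in $\ell^2(H)$ picks up enough decay from the infinitely many nearly-orthogonal lamp contributions to be $\ell^2$ regardless of the recurrence of $G$; making this precise — quantifying the cancellation/orthogonality across sites and verifying it survives the harmonic averaging — is the step I would budget the most care for. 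The remaining verifications (cocycle identity, weak mixing, non-vanishing, reduction to Shalom's case when $H^{\mathrm{ab}}$ is infinite) are routine given the framework already set up in Section \ref{sec:setting} and in the proof of Proposition \ref{wreath3}.
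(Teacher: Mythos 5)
The central gap is your source of the ``seed'' cocycle on $H$ in the finite-abelianization case. You propose to take $\mathcal{K}=\ell^2(H)$ with the regular representation and turn a rescaled almost-invariant sequence $\xi_n$ into a (virtual co)cycle. This cannot produce what you need: each $c_n\bigl(\xi_n-\lambda(h)\xi_n\bigr)$ is a genuine coboundary, so any limit lies in $\overline{B^1(H,\lambda_H)}$ and hence has zero harmonic representative; worse, for an infinite finitely generated amenable group one has $\overline{H}^1(H,\lambda_H)=0$ (Bekka--Valette, equivalently the vanishing of the first $\ell^2$-Betti number of amenable groups), so \emph{no} cocycle with coefficients in the regular representation of $H$ --- virtual coboundary or not --- can represent a non-zero reduced class. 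The input the paper uses at exactly this point is different and essential: by Mok / Korevaar--Schoen (cf.\ the appendix of Kleiner's paper, as cited), an infinite amenable $H$ admits a non-zero $\mu$-harmonic cocycle $b_0$ with respect to \emph{some} auxiliary unitary representation $\pi_0$ of $H$, which is in general not the regular one. That existence statement is what replaces Shalom's surjection $H\to\mathbb{Z}$, and your outline has no working substitute for it.

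Your ``main obstacle'' is also misplaced. Once $b_0$ is in hand, the cocycle is simply $b(f,g)(x)=b_0(f(x))$ on $\ell^2(G,\mathcal{H})$: it vanishes on the base group and is supported on the finite support of $f$, so square-summability over $G$ is automatic and no transience, voltage functions, or cancellation estimates across $\sqrt{n}$ lamp sites are needed. (The transience device of Proposition \ref{wreath3} is required there only because the lamp group is finite, hence carries no harmonic cocycle of its own, and the cocycle must instead record the displacement in the base.) Your observation that the translation action of $G$ prevents finite-dimensional subrepresentations of the assembled representation is correct and, with the corrected seed, would give a quick finish; the paper instead proceeds by contradiction, decomposing into finite-dimensional irreducibles and using Claim \ref{claim}, Shalom's rigidity proposition and the finiteness of $H^{\mathrm{ab}}$ to rule them out. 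Either way, as written your proposal is missing the one construction on which the finite-abelianization case rests, and budgets its effort on a difficulty that does not arise.
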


\begin{proof}
Let $\mu$ and $\nu$ be  non-degenerate finitely supported symmetric probability measures on $H$ and $G$ respectively. They induce a measure $\theta =\frac{1}{2}(\mu+\nu)$ on $H \wr G$ where we identify $H$ with its copy over the neutral element of $G$ in $H^G \subset H \wr G$.

When $H$ admits an infinite abelianization, the proposition was proved by Shalom in \cite[Theorem 5.4.1]{Shalom}. We give the proof for completeness. Denote $\phi: H \rightarrow \mathbb{Z}$ a surjective homomorphism and let $\pi:G \rightarrow \mathcal{U}(\ell^2(G))$ be the right regular representation of $G$. It is weakly-mixing as $G$ is infinite.  Then $b(f,g)(x)=\phi \circ f(x)$ defines a non-zero $\theta$-harmonic cocycle with respect to the (weakly-mixing) representation of $H\wr G$ factoring through $\pi$.

We are left with the case when $H$ has finite abelianization. We consider a unitary representation $\pi_0:H \rightarrow \mathcal{U}(\mathcal{H})$ with an associated $\mu$-harmonic cocycle $b_0$, which exist by amenability of $H$, see \cite[Appendix]{Kleiner}. Define $b : H\wr G \rightarrow \ell^2(G,\mathcal{H})$ by $b(f,g)(x)=b_0(f(x))$. This is a non-zero $\theta$-harmonic cocycle for the unitary representation $\pi(f,g) \varphi(x)=\pi_0(f(x))\varphi(x.g)$. In particular, $\overline{H^1}(H\wr G,\pi) \neq 0$.

Assume by contradiction that $H\wr G$ has property $H_{\mathrm{FD}}$. Then we obtain a direct sum $\ell^2(G,\mathcal{H})=V'\oplus \bigoplus_{n \in \mathbb{N}} V_n$ corresponding to a decomposition of $\pi$ into $\pi'$ weakly-mixing and countably many finite-dimensional and irreducible representations $\pi_n$, such that the cocycle $b_n$ is not reduced in $\overline{H^1}(\pi_n,V_n)$ for some $n$.

\begin{claim}\label{claim}
The restricted cocycle $b_n|_{H^G}$ is not reduced in $\overline{H^1}(\pi_n|_{H^G},V_n)$.
\end{claim}

\begin{proof}[Proof of Claim \ref{claim}]
By the definition of $b$, the cocycle $b_n|_{H^G}$ is not zero. So there exists a large enough $k$ such that $\textrm{supp} (\theta^{(k)}) \cap \textrm{supp} (b_n) \cap H^G$ is non-empty. Using $\theta^{(k)}$-harmonicity of $b_n$ and the invariance under $G$ of $b$, we get $0=\sum_{x \in H^G} b_n(x)\theta^{(k)}(x)$.

Now if $b_n|_{H^G}$ were reduced, by finite dimension it would be a coboundary, giving a vector $v$ in $V_n$ such that $\forall x \in H^G, b_n(x)=v-\pi_n(x)v$. Then
\begin{align*}
0=2\left\langle \sum_{x \in H^G} b_n(x)\theta^{(k)}(x),v\right\rangle=\sum_{x \in H^G}\left\langle b_n(x),v-\pi_n(x)v\right\rangle \theta^{(k)}(x)\\
= \sum_{x \in H^G}|| b_n(x)||^2_{V_n} \theta^{(k)}(x)>0,
\end{align*}
where the middle equality follows classicaly from the cocycle identity.
\end{proof}

By \cite[Proposition 3.2]{Shalom_rigidity}, the claim ensures the existence of $g_0$ in $G$ and a non-zero vector $v$ in $V_n$ which is invariant under $\pi_n|_{H^{G\setminus\{g_0\}}}$. By irreducibility and finite dimension of $\pi_n$, there exist $\gamma_1,\dots,\gamma_r$ in $H \wr G$ such that the vectors $\pi_n(\gamma_i)v$ span $V_n$.

It follows that $\pi_n$ restricted to the intersection $\cap_{i=1}^r  H^{G\setminus\{g_0\}}\gamma_i^{-1}$ fixes $V_n$ pointwise. Moreover as $G$ is infinite, this intersection contains a copy $H^{g_1}$ for some $g_1$ in $G$. By conjugacy, we deduce that $\pi_n|_{H^G}$ is trivial and finally that $b_n|_{H^G}:H^G \rightarrow V_n$ is a non-zero homomorphism. This contradicts our assumption that $H$ has finite abelianization.
\end{proof}

We observe that Claim \ref{claim} is the only new ingredient not appearing in \cite[Section 5.4]{Shalom}.

\bibliographystyle{alpha}
\bibliography{H_FD}

\textsc{\newline J\'er\'emie Brieussel --- Universit\'e de Montpellier 
} --- jeremie.brieussel@umontpellier.fr

\textsc{\newline Tianyi Zheng --- UC San Diego
} --- tzheng2@math.ucsd.edu
\end{document}